\documentclass{amsart}

\usepackage{amssymb,amsmath,amsfonts,amsthm,mathrsfs}
\usepackage{color}\usepackage[dvipsnames]{xcolor}\usepackage{euler,palatino}
\usepackage{graphicx}
\usepackage{adjustbox}
\usepackage{mathrsfs}
\usepackage{mathtools}
\usepackage{enumitem}
\usepackage{tikz}
\usepackage{cite}
\usepackage{hyperref}
\usepackage[
top    = 1.00in,
bottom = 1.00in,
left   = 1.00in,
right  = 1.00in]{geometry}
\usepackage{tikz-cd}
\usepackage{ stmaryrd }
\usetikzlibrary{decorations.pathreplacing,decorations.markings}
\usetikzlibrary{arrows}

\tikzset{
commutative diagrams/.cd,
arrow style=tikz,
diagrams={>=latex}}

\newtheorem{thm}{Theorem}
\newtheorem{claim}{Claim}
\newtheorem{cor}{Corollary}

\newtheorem{lem}{Lemma}
\newtheorem{prop}{Proposition}

\theoremstyle{definition}
\newtheorem{defn}{Definition}

\newtheorem{notn}{Notation}

\theoremstyle{definition}

\newtheorem{remark}{Remark}

\newcommand\Om{\Omega}
\newcommand\Z{\mathbb{Z}}

\newcommand\Th{{}^{\mathrm{th}}}

\newcommand\Bcal{\mathcal{B}}

\newcommand\Abf{\mathbf{A}}
\newcommand\Bbf{\mathbf{B}}
\newcommand\Acal{\mathcal{A}}
\newcommand\Xbf{\mathbf{X}}
\newcommand\Gbf{\mathbf{G}}
\newcommand\Hbb{\mathbb{H}}
\newcommand\Pbb{\mathbb{P}}
\newcommand\Zbb{\mathbb{Z}}

\newcommand\meet{\wedge}
\newcommand\join{\vee}
\newcommand\sgn{\operatorname{sgn}}
\newcommand\Gcal{\mathcal{G}}

\newcommand{\limn}[1]{\operatorname{lim}^{#1}}
\newcommand{\dlim}{\varinjlim}

\newcommand\dom{\operatorname{dom}}

\author[N. Bannister]{Nathaniel Bannister}
\author[J. Bergfalk]{Jeffrey Bergfalk}
\author[J. Tatch Moore]{Justin Tatch Moore}

\title[On the additivity of strong homology]{On the additivity of strong homology for
locally compact separable metric spaces}

\keywords{additivity axiom, compact support, Steenrod-Sitnikov, strong homology, weakly compact}

\subjclass[2010]{03E35, 03E55, 03E75, 55N07, 55N40}

\thanks{This research was primarily completed during the summer of 2020 while the first author was an undergraduate at Cornell and
the second author was a postdoctoral fellow at the Universidad Nacional Aut\'{o}noma de M\'{e}xico campus in Morelia.
The first author's research was supported through a grant from of the Cornell math department.
The third author was partially supported by NSF grant DMS-1854367.}

\begin{document}

\maketitle

\begin{abstract}
We show that it is consistent relative to a
weakly compact cardinal that strong homology is additive
and compactly supported within the class of locally
compact separable metric spaces.
This complements work of Marde\v{s}i\'{c} and Prasolov \cite{SHINA} showing that the Continuum Hypothesis implies that a countable sum of Hawaiian earrings witnesses the failure of strong homology to possess either of these properties.
Our results build directly on work of Lambie-Hanson and
the second author \cite{SVHDL}
which establishes the consistency, relative to a weakly compact cardinal, of
$\limn{s} \Abf = 0$ for all $s \geq 1$ for a certain pro-abelian group $\Abf$; we show that that work's arguments carry implications for the vanishing and additivity of the $\limn{s}$ functors over a substantially more general class of pro-abelian groups indexed by $\mathbb{N}^{\mathbb{N}}$.
\end{abstract}

\section{Introduction}

Samuel Eilenberg and Norman Steenrod's 1952 work \emph{Foundations of Algebraic Topology} is generally credited with having supplied exactly that: a cohesive and foundational framework for the development of the field \cite{EileStee}.
Its core contribution was an axiomatic approach to homology and cohomology functors; more precisely, Eilenberg and Steenrod showed that such functors from the category of finite polyhedra are fully determined by a short list of requirements now known as the \emph{Eilenberg-Steenrod axioms}.
Equally striking, though, was their attention therein to limit constructions within the subject, chiefly as embodied in \v{C}ech homology and cohomology theories; this material occupies roughly one third of their text.
Eilenberg and Steenrod underscored the delicacy of the relations between such constructions and their axioms by showing that any homology theory commuting, like \v{C}ech homology, with inverse limits must fail to satisfy their Exactness Axiom.
This delicacy is a fundamental context for the present article.

Much of the interest of limit operations lay in the possibilities for extending what Eilenberg and Steenrod had shown to be canonical homology and cohomology functors on the category of finite polyhedra to broader classes of topological spaces.
A decisive advance in this direction was Milnor's 1961 \emph{On axiomatic homology theory}, which showed that the Eilenberg-Steenrod axioms together with one further axiom determine a unique homology theory (up to natural isomorphism) on the category $\mathsf{HCW}$ of spaces having the homotopy type of a CW complex \cite{Milnor}.
Milnor termed this axiom \emph{Additivity}; it is the requirement that a homology theory meet the conditions of the following definition.
\begin{defn}A homology theory $H_{\bullet}$ is \emph{additive on the class $\mathcal{C}$ of topological spaces} if for every natural number $p$ and every family $\{X_\alpha\,|\,\alpha\in A\}$ with each $X_\alpha$ and $\coprod_A X_\alpha$ in $\mathcal{C}$, the map
 $$i^{*}_p:\bigoplus_A H_p(X_\alpha)\rightarrow H_p(\coprod_A X_\alpha)$$
 induced by the inclusion maps $i_\alpha:X_\alpha\hookrightarrow\coprod_A X_\alpha$ is an isomorphism.
\end{defn}
Milnor noted that the most prominent additive homology functor is singular homology, but two of his contemporaneous works would simultaneously complicate this distinction.
Let $Y^{(p)}$ denote the one-point compactification of a countable sum of $p$-dimensional disks, or what is often more succinctly termed a \emph{compact wedge} or \emph{cluster of $p$-spheres} or \emph{$p$-dimensional Hawaiian earring}.
In 1961, Barratt and Milnor showed that for all $p>1$ the rational-coefficient singular homology groups of $Y^{(p)}$ are nonzero in infinitely many dimensions --- a striking instance of singular homology's failure to reliably reflect a space's \emph{shape}, in senses the field of \emph{(strong) shape theory} makes precise \cite{BarrattMilnor, Shape2, Shape1}.
That same year, Milnor showed that the Eilenberg-Steenrod axioms together with two further axioms determine a unique homology functor on the category $\mathsf{MC}$ of metric compacta \cite{Milnor2}.
One of these axioms, dual in spirit to Additivity, simply required that this functor behave productively on clusters (as singular homology does not); the most prominent incarnation of this unique theory at the time carried the name of \emph{Steenrod homology}.
Here two points are worth remarking:
\begin{itemize}
\item \v{C}ech homology satisfies all three of Milnor's supplementary axioms referenced above.
These virtues stem from a strategy for extending homology functors beyond the class of polyhedra via \emph{systems of polyhedral approximations} to a given space.\footnote{The nerves of the open covers of a space form the best-known such system.
See \cite{Ferry, Massey, Sklyarenko} for more on the stakes, virtues, and limitations of the various classical approaches to homology along these lines.}
Homology theories constructed on such principles are said to be of \emph{\v{C}ech type} and tend, in contradistinction to singular homology, to be highly responsive to spaces' shapes.
\item Within the aforementioned trio of works, Milnor's most celebrated contribution is probably his introduction, in \cite{Milnor}, of the derived functors $\mathrm{lim}^s$ of the inverse limit to the subject.
These are a fundamental focus of the present work, and we review their definition and computation below.
More immediately significant for our purposes is the heuristic that the functors $\mathrm{lim}^s$ $(s\geq 1)$ measure the failure of the inverse limit functor to be exact.
\end{itemize}
Continuing with the second point, \v{C}ech homology's failure to satisfy the Exactness Axiom might be viewed as stemming from its negligence of the functors $\mathrm{lim}^s$ $(s\geq 1)$; in this perspective, attention to those functors should hold the key to reconciling \v{C}ech-type approaches to homology with the Eilenberg-Steenrod axioms.

The most systematic elaboration of this last insight is \emph{strong homology}.
Developed chiefly by Sibe Marde\v{s}i\'{c} in the decade of the 1980s and in the reference work \cite{SHINA}, this homology theory both subsumes a number of classical constructions and admits more contemporary formulations, in terms of homotopy limits \cite{Cordier} or $(\infty,1)$-categories \cite[\S 7.1.6]{Lurie}, for example. It exhibits the following desirable features:
\begin{itemize}
    \item It satisfies the Eilenberg-Steenrod axioms for paracompact pairs $(X,A)$ and, hence, coincides with singular homology on the category $\mathsf{HCW}$ of spaces having the homotopy type of a CW complex.
    \item It coincides with Steenrod homology on the category $\mathsf{MC}$ of metric compacta.
    \item It is strong shape invariant.
\end{itemize}
It was against this background that Marde\v{s}i\'{c} and Prasolov turned, in 1988, to the question of whether strong homology is additive \cite{SHINA}.
They considered as well the closely related question of \emph{having compact supports} (a property which seems first to have appeared in \cite[Exercise IX.C.1]{EileStee}).

\begin{defn}
A homology theory is \emph{compactly supported on the class $\mathcal{C}$ of topological spaces} if for every natural number $p$ and $X\in \mathcal{C}$, the inclusion maps $i_{K K'}: K\to K'$ and $K\to X$ among compact subsets $K, K'$ of $X$ induce an isomorphism
$$\mathrm{colim}(H_p(K),i^{*}_{K K'})\to H_p(X).$$
\end{defn}
At heart, theirs were questions about the compatibilities between the main prospective continuity properties of homology theories on any robust class of topological spaces.

Marde\v{s}i\'{c} and Prasolov succeeded in showing that if the Continuum Hypothesis is assumed then strong homology fails to be either additive or compactly supported, even on so restricted a class as that of the closed subsets of $\mathbb{R}^2$.
Since that time, the obvious complementary questions have remained open, namely, those of whether it is consistent with the ZFC axioms for strong homology to be additive, or to have compact supports, on any natural class of topological spaces properly extending $\mathsf{MC}$.
These appear as Questions 7.3 and 7.4 of \cite{SVHDL}, for example (see also that work's introduction for a fuller history of this problem). Our main theorem provides an answer to both these questions.

\begin{thm} \label{main_result}
If the ZFC axioms are consistent with the existence of a weakly compact cardinal, then the ZFC axioms
are consistent with the assertion that
strong homology is compactly supported, and therefore additive, on the class
of locally compact separable metric spaces.
\end{thm}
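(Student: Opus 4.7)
The plan is to reduce the topological statement to a purely algebraic vanishing statement for the derived inverse limit functors, and then to generalize the main theorem of \cite{SVHDL} across a suitably broad class of $\mathbb{N}^{\mathbb{N}}$-indexed pro-abelian groups. Given a locally compact separable metric space $X$, the space is $\sigma$-compact and second countable, so it admits a polyhedral resolution $\Xbf = (X_\alpha, p_{\alpha\beta})$ whose directed set of indices can be taken to be a cofinal subset of $\mathbb{N}^{\mathbb{N}}$ under pointwise dominance; one builds such a resolution by interleaving a cofinal sequence of compact subsets of $X$ with successively finer nerves of their open covers. Strong homology $\bar{H}_p(X)$ is computed from the inverse system of chain complexes attached to $\Xbf$, and a Milnor-type spectral sequence places the obstructions to compact support (and, thereby, to additivity over countable coproducts) in $\limn{s} \Abf$ for $s \geq 1$, where $\Abf$ ranges over the sub- and quotient pro-systems that arise from the chain-level pro-system of $\Xbf$ and from the telescope of its compacta. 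First I would make this reduction precise, isolating a class $\mathcal{K}$ of pro-abelian groups, indexed on $\mathbb{N}^{\mathbb{N}}$ and closed under the natural operations of this computation, such that $\limn{s} \Abf = 0$ for all $\Abf \in \mathcal{K}$ and all $s \geq 1$ is sufficient for the topological conclusion.

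The crux is the second step: extending the vanishing theorem of \cite{SVHDL} from the single distinguished pro-group treated there to the whole of $\mathcal{K}$. The argument of \cite{SVHDL} depends on countability of the transition maps' supports, on a coherent structure governing how cocycles assemble across the $\mathbb{N}^{\mathbb{N}}$ index, and on a reflection argument available in the target forcing model, namely a L\'{e}vy collapse of a weakly compact cardinal to $\aleph_2$. The task is to abstract those features of the distinguished pro-group into an axiomatic hypothesis on an $\mathbb{N}^{\mathbb{N}}$-indexed pro-abelian group $\Abf$ and then verify that every member of $\mathcal{K}$ satisfies that hypothesis. I expect this to be the main obstacle: pro-groups arising from polyhedral resolutions of arbitrary locally compact separable metric spaces are far less canonical than the one of \cite{SVHDL}, and the various subquotients, bonding maps, and direct summands that enter the computation must all be shown, uniformly, to admit the coherence structure the reflection argument requires. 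The hope is that weak compactness supplies exactly enough reflection to send any candidate obstruction cocycle down to a countable substructure in which it can be trivialized, and that this reflection is sensitive only to features of $\Abf$ that persist across the class $\mathcal{K}$.

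Once the vanishing $\limn{s} \Abf = 0$ is established for every $\Abf \in \mathcal{K}$ and every $s \geq 1$ in the target model, compact support of strong homology on the class of locally compact separable metric spaces follows directly from the reduction of the first paragraph. Additivity is then automatic within the class: any coproduct $\coprod_A X_\alpha$ of locally compact separable metric spaces that is itself locally compact separable metric must have $A$ countable, since otherwise separability fails; and in a countable disjoint union every compact set meets only finitely many summands, so the colimit of $H_p(K)$ over compacta $K$ decomposes as the direct sum of the analogous colimits over the summands, producing the additivity isomorphism as a consequence of compact support.
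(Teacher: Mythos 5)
There is a genuine gap, and it lies exactly where you place the weight of the argument: the algebraic target you propose is the wrong one. You ask for a class $\mathcal{K}$ of $\mathbb{N}^{\mathbb{N}}$-indexed pro-abelian groups, closed under the sub- and quotient systems arising from chain-level pro-systems of resolutions, with $\limn{s}\Abf=0$ for all $\Abf\in\mathcal{K}$ and all $s\geq 1$. No forcing extension can satisfy this: already for a single compact metric space such as the dyadic solenoid, the associated tower $\Z\xleftarrow{2}\Z\xleftarrow{2}\cdots$ has $\limn{1}\neq 0$ provably in ZFC, and any class built as you describe from resolutions of locally compact separable metric spaces contains such systems; meanwhile compact supports is trivially true on compacta, so vanishing is neither available nor needed. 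The correct pivot (the paper's Theorem \ref{Om_add_cpt_supt}) is \emph{additivity} of the derived limits for $\Om$-systems, i.e., that $\bigoplus_k\limn{s}\Gbf_k\to\limn{s}\Gbf$ is an isomorphism for all $s\geq 0$: for $s\geq 2$ this does reduce to vanishing of $\limn{s}\Gbf$ (Corollary \ref{vanishing}), but the $s=1$ case is an additivity statement requiring its own argument (Lemma \ref{add_lem}, via the partition Lemma \ref{wc_n=2again}), not a vanishing one. Moreover, the passage from this algebra to compact supports is not a formal ``Milnor-type spectral sequence'' consequence of resolving $X$ over $\mathbb{N}^{\mathbb{N}}$: the paper first proves the equivalence of $\Om$-system additivity with additivity of $\overline{H}_p$ on \emph{countable sums of compact metric spaces} (Miminoshvili exact sequences plus the Five Lemma), and then gets compact supports on all locally compact separable metric spaces from that by Milnor's telescope construction $L=\bigcup_n\overline{U}_n\times[n,n+1]$ and a Mayer--Vietoris argument. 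Your outline omits this telescope step entirely; your final paragraph instead derives additivity \emph{from} compact supports, which is the easy direction, leaving the hard implication unargued.

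Two further points of substance. First, the model is misidentified: \cite{SVHDL} (and this paper) work in the extension by $\Hbb_\kappa$, the finite-support iteration of Hechler forcing of weakly compact length, not a L\'{e}vy collapse of $\kappa$ to $\aleph_2$. The Hechler generics $h_\alpha$ are essential, supplying the $\leq^*$-cofinal family $\{h_\alpha\mid\alpha<\kappa\}$ along which coherent families are trivialized; weak compactness enters only through a higher-dimensional $\Delta$-system/uniformization lemma for families of conditions indexed by $[\kappa]^n$ (Lemma \ref{uniformizing_lemma}), not through an elementary-embedding reflection of obstruction cocycles to countable substructures. Second, the class to which the arguments of \cite{SVHDL} actually generalize is concrete and restricted: $\Om$-systems assembled from \emph{finitely generated} groups $G_{n,k}$. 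Finite generation is used in the proof of Theorem \ref{coherent->trivial} so that conditions can decide the finitely supported error terms $\dot{\mathtt{e}}(\vec{\alpha})$ to be objects of the ground model; an axiomatically defined class ``closed under the natural operations of the computation'' would have to be shown to preserve this feature, which your proposal does not address.
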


\emph{Weakly compact cardinals} were first considered by Tarski in his study of the compactness properties of infinitary languages.
Their existence is a strong form of the Axiom of Infinity. Weakly compact cardinals are \emph{strongly inaccessible}; in particular, their existence implies the consistency of the ZFC axioms and consequently cannot be deduced from those axioms alone.
On the other hand, the assumption of their existence is a modest one in comparison to most large cardinal principles, and the conventional wisdom has long
been that the existence of a weakly compact cardinal is indeed consistent with ZFC (even if this statement is, in any meaningful sense, unprovable); see \cite[\S1.4 and p. 471]{higher_infinite}.

In the process of arguing Theorem \ref{main_result}, we will in fact show that on the class of locally compact separable metric spaces (or, equivalently, locally compact Polish spaces), the two aforementioned questions of additivity and compact supports are one and the same; see Theorem \ref{Om_add_cpt_supt} below.
Our main theorem, in consequence, is in some sense sharp: Lisica in \cite{Lisica} exhibited two separable metric spaces on which strong homology fails in ZFC to have compact supports, but neither of those examples is locally compact. A restriction to metric spaces, similarly, is dictated by Prasolov's non-metrizable ZFC counterexample to the additivity of strong homology \cite{Prasolov}.
Still, it is unclear if Theorem \ref{main_result} can be extended to the class of $\sigma$-compact Polish spaces
or to the class of locally compact metric spaces (in the latter case one would expect to replace
\emph{weak compactness} with \emph{supercompactness}
as a hypothesis in order to manage summands of unrestricted cardinality).

Our witness to Theorem \ref{main_result} is the model $V^{\Pbb}$ of \cite{SVHDL}, in which $\Pbb$ is a finitely supported iteration of \emph{Hechler forcing} which is of weakly compact length.
Our proof has four main components. In the first, we introduce algebraic objects, \emph{$\Om$-systems of abelian groups}, which capture the computational core of the questions discussed above. In the second, we show that the arguments of \cite{SVHDL} generalize from their original target of a single inverse system $\mathbf{A}$ to apply to all $\Om$-systems of abelian groups.
In our third step, we conclude from these arguments that, in $V^{\Pbb}$, the higher derived limit functors $\limn{s}$ are additive in the context of $\Om$-systems of abelian groups.
In our fourth and concluding step, we show that this form of additivity is equivalent both to strong homology being additive, and to strong homology having compact supports, on the category of locally compact separable metric spaces.

Before beginning, we note one further appealing aspect of our results. In any model of set theory witnessing the conclusion of Theorem \ref{main_result}, two of the main homology theories extending Steenrod homology beyond the class of metric compacta coincide on the category of locally compact separable metric spaces; each theory thereby inherits the known desirable features of the other.
These two theories are strong homology and Steenrod-Sitnikov homology.
The latter is simply defined as the compactly-supported extension of Steenrod homology; it was introduced largely to witness the duality theorem we cite just below \cite{Milnor2,Sitnikov}.
We collect these effects in the following corollary; here and below, we restrict our attention to integral homology groups.
\begin{cor} If the ZFC axioms are consistent with the existence of a weakly compact cardinal, then the ZFC axioms
are consistent with the following:
\begin{itemize}
\item Steenrod-Sitnikov homology is strong shape invariant on the category of locally compact separable metric spaces.
\item Strong homology satisfies Sitnikov duality for any locally compact subspace of a sphere (see \cite[p. 80]{Milnor2}).
\item On the category of locally compact separable metric spaces, the Steenrod-Sitnikov and strong homology theories coincide, and are axiomatized by the axioms for Steenrod homology together with the axiom of compact supports.
\end{itemize}
\end{cor}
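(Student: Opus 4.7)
The plan is to derive the three items of the corollary from Theorem \ref{main_result} together with two classical facts: both strong homology and Steenrod-Sitnikov homology restrict to Steenrod homology on $\mathsf{MC}$, and a homology theory with compact supports on a class $\mathcal{C}$ of spaces is determined on $\mathcal{C}$ by its restriction to the compact objects in $\mathcal{C}$. I would prove the third bullet first, and then transport known properties of each theory to the other to obtain the first two bullets.

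For the third bullet, by Theorem \ref{main_result} strong homology is compactly supported on the class of locally compact separable metric spaces, while Steenrod-Sitnikov homology has compact supports by definition. Both theories agree with Steenrod homology on $\mathsf{MC}$: this is listed as a feature of strong homology in the introduction, and is the defining property of Steenrod-Sitnikov homology (\cite{Milnor2, Sitnikov}). Hence for any $X$ in our class, the canonical colimit formula
$$H_p(X) \;\cong\; \mathrm{colim}\bigl(H_p(K),\,i^{*}_{K K'}\bigr)_{K \subseteq X \text{ compact}}$$
forces both theories to take the common value $\mathrm{colim}_{K} H_p^{\mathrm{Steenrod}}(K)$, with the inclusion-induced maps on compacta agreeing in the two theories. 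The coincidence is a natural isomorphism of functors because both sides arise by the universal property of the same colimit system. The axiomatization assertion then reduces to invoking Milnor's characterization of Steenrod homology on $\mathsf{MC}$ by the Steenrod axioms (\cite{Milnor2}) and noting that the axiom of compact supports extends any such functor uniquely to our larger class via the same colimit formula.

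With the coincidence in hand, the first bullet follows because strong shape invariance of strong homology is a listed feature in the introduction, and naturally isomorphic functors share the property. For the second bullet, Sitnikov duality is classically known for Steenrod-Sitnikov homology on locally compact subspaces of a sphere (\cite{Milnor2, Sitnikov}); since such subspaces are locally compact separable metric, the coincidence transports Sitnikov duality to strong homology.

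The main conceptual obstacle is to ensure that the colimit-based identification of the two theories is an isomorphism of \emph{functors}, not merely a family of isomorphisms of abelian groups: one needs natural transformations over continuous maps, compatibility with the long exact sequence of a pair, and so on. This is handled by verifying that the comparison maps from the colimit to either target theory are the canonical ones induced by inclusions of compacta; once that is established, functoriality and exactness are inherited from the functoriality and exactness on $\mathsf{MC}$ through the universal property of filtered colimits.
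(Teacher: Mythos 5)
Your proposal is correct and follows essentially the same route as the paper, which simply declares the corollary immediate from Theorem \ref{main_result} together with the definitions of Steenrod--Sitnikov homology and the listed properties of strong homology in \cite{SHINA,Milnor2}; you have merely spelled out the routine colimit comparison and the transport of strong shape invariance and Sitnikov duality that the paper leaves implicit.
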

The corollary is immediate from Theorem \ref{main_result} together with the definitions and assertions in the first pages, e.g., of \cite{SHINA,Milnor2}.
It may be read as describing a highly canonical homology theory in any model witnessing Theorem \ref{main_result}. See also \cite{Petkova} and \cite{Melikhov} and the references therein; the latter justly emphasizes the computational benefits of uniqueness results like this corollary.

\section{Background and preliminary definitions}
\label{background:sec}

The reader is referred to \cite{set_theory:Kunen} for background on set theory in general
and forcing in particular;
large cardinals are treated extensively in \cite{higher_infinite}.
See \cite{SSH} for a complete introduction to strong homology and its associated homological algebra; a more
concise account may be found in \cite{SHINA}.

We adopt von Neumann's convention that an ordinal is the set of its predecessors.
In particular $0= \emptyset$ is the smallest ordinal, $n+1 = \{0,\ldots,n\}$, and $\omega$ is the set of finite ordinals --- a set which coincides with the nonnegative integers.
All counting and indexing will begin at 0 unless otherwise indicated.
We will typically use $\alpha$, $\beta$, and $\gamma$ to denote ordinals.
Cardinals are ordinals which are strictly larger in cardinality than their predecessors.
These are generally denoted $\kappa$, $\lambda$ and $\nu$.

If $X$ is a set, we will write $[X]^n$ for the set of all $n$-element subsets of $X$.
If $X$ is linearly ordered, we will identify elements of $[X]^n$ with their increasing
enumerations, viewed as elements of $X^n$.
We construe $X^0$ as the singleton consisting of the null sequence.
For $0\leq i<n$ and $\vec{x}\in X^n$, we will write $x_i$ for the $i\Th$ coordinate of $\vec{x}$ and write $\vec{x}^i$ for the element of $X^{n-1}$ obtained from $\vec{x}$ by deleting $x_i$.

Recall that a partial order is a \emph{lattice} if each pair of elements $x,y$ has a least upper upper bound
$x \join y$ and greatest lower bound $x \meet y$.

\begin{notn} If $x,y \in\,^ \omega\omega$, we will write $x \leq y$
to mean $x(i) \leq y(i)$ for all $i \in \omega$.
Henceforth we will write $\Om$ for the lattice
$({^\omega}\omega,\leq)$.
\end{notn}

We turn now to our fundamental objects of interest.
First among these are inverse systems of abelian groups, together with the main categorical contexts for such objects.

\begin{defn}
Let $\Gamma$ be a partial order.
A \emph{$\Gamma$-indexed inverse system of abelian groups} $\mathbf{G}=(G_x,p_{x,y},\Gamma)$ consists of abelian groups $G_x$ for all $x\in\Gamma$ and bonding homomorphisms $p_{x,y}:G_y\to G_x$ for each $x\leq y$ in $\Gamma$; these additionally must satisfy the equations $p_{x,x}=\mathrm{id}$ and $p_{x,y} p_{y,z}=p_{x,z}$ for each $x\leq y\leq z$ in $\Gamma$.
A \emph{level morphism} $\mathbf{g}:(G_x,p_{x,y},\Gamma)\to(H_x,q_{x,y},\Gamma)$ is a collection of morphisms $g_x:G_x\to H_x$ $(x\in\Gamma)$ satisfying $q_{x,y} g_y = g_x p_{x,y}$ for all $x\leq y$ in $\Gamma$.
These systems of abelian groups and level morphisms together form the category which we denote $\mathsf{Ab}^{\Gamma}$.
\end{defn}
For the following, see \cite[\S 11.1]{SSH}.
\begin{prop}
$\mathsf{Ab}^{\Gamma}$ is an abelian category for any partial order $\Gamma$, and a sequence $$\mathbf{G}\xrightarrow{\mathbf{g}}\mathbf{H}\xrightarrow{\mathbf{h}}\mathbf{K}$$
is exact at $\mathbf{H}$ if and only if it is levelwise exact, i.e., if and only if $\mathrm{ker}(h_x)=\mathrm{im}(g_x)$ for all $x\in\Gamma$.
\end{prop}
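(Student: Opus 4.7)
The strategy is to realize $\mathsf{Ab}^{\Gamma}$ as a special case of a functor category: viewing $\Gamma$ as a thin category with a unique arrow $y\to x$ whenever $x\leq y$, an object of $\mathsf{Ab}^{\Gamma}$ is exactly a functor $\Gamma\to\mathsf{Ab}$, and a level morphism is exactly a natural transformation. The proposition is then an instance of the general fact that a functor category valued in an abelian category is itself abelian, with kernels, cokernels, images, products, and coproducts all computed pointwise. I would nevertheless give the proof by directly unpacking this principle, since the resulting levelwise formulas are precisely what the rest of the paper will need.

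Concretely, I would first equip $\mathsf{Ab}^{\Gamma}$ with the obvious additive structure: hom-sets are abelian groups under coordinatewise addition, the system with $G_x=0$ at every $x$ is a zero object, and binary biproducts are given by levelwise direct sum with the evident bonding maps. Next, for a level morphism $\mathbf{g}:\mathbf{G}\to\mathbf{H}$, I would define the kernel and cokernel levelwise. The commuting square $q_{x,y}g_y = g_x p_{x,y}$ guarantees that $p_{x,y}$ restricts to a map $\ker g_y \to \ker g_x$ and descends to a map $\mathrm{coker}\,g_y \to \mathrm{coker}\,g_x$, so the families $(\ker g_x)_{x\in\Gamma}$ and $(\mathrm{coker}\,g_x)_{x\in\Gamma}$ inherit $\Gamma$-system structures, and the resulting inclusions and projections are routinely verified to be the categorical kernel and cokernel of $\mathbf{g}$.

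The only remaining axiom is that every monic level morphism is a kernel and every epic level morphism is a cokernel. The key observation is that $\mathbf{g}$ is monic in $\mathsf{Ab}^{\Gamma}$ iff $\ker(\mathbf{g})=0$, which, because the kernel is computed levelwise, holds iff each $g_x$ is injective; dually, $\mathbf{g}$ is epic iff each $g_x$ is surjective. Mono-is-kernel and epi-is-cokernel then reduce level-by-level to the corresponding standard facts in $\mathsf{Ab}$. With the abelian structure established, exactness of $\mathbf{G}\xrightarrow{\mathbf{g}}\mathbf{H}\xrightarrow{\mathbf{h}}\mathbf{K}$ at $\mathbf{H}$ is by definition the identity of subobjects $\mathrm{im}(\mathbf{g})=\ker(\mathbf{h})$ of $\mathbf{H}$; since images are kernels of cokernels and both constructions have been shown to commute with passage to levels, this identity is equivalent to $\mathrm{im}(g_x)=\ker(h_x)$ at every $x\in\Gamma$, i.e.\ to levelwise exactness. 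The only nontrivial step in the entire argument is the characterization of monos and epis, and even this is handled cleanly by the kernel/cokernel criteria rather than by ad hoc constructions of test systems; I do not anticipate any substantive obstacle.
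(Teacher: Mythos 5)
Your proposal is correct: the realization of $\mathsf{Ab}^{\Gamma}$ as a functor category with kernels, cokernels, and images computed levelwise, and the reduction of monos/epis and of exactness to the levelwise statements, is exactly the standard argument. The paper itself offers no proof but simply cites \cite[\S 11.1]{SSH}, and your argument is essentially the one found there, so there is nothing further to add.
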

The bulk of our work below may be viewed as taking place in the category $\mathsf{Ab}^\Omega$.
Additivity questions, however, involve interactions between inverse systems with varying index-sets; these are best formulated in the category pro-$\mathsf{Ab}$ of pro-abelian groups.

\begin{defn}
Inverse systems of abelian groups, or \emph{pro-abelian groups} for short, comprise the objects of pro-$\mathsf{Ab}$.
The morphisms therein between two such systems $(G_x,p_{x,y},\Gamma)$ and $(H_x,q_{x,y},\Lambda)$ are the elements of the set
$$\mathrm{lim}_{y\in\Lambda}\,\mathrm{colim}_{x\in\Gamma}\,\mathrm{Hom}(G_x,H_y).$$
\end{defn}
Here we opted for a more abstract definition since the morphisms of pro-$\mathsf{Ab}$ will not form our primary concern; for more concrete (and verbose) descriptions of the morphisms in pro-$\mathsf{Ab}$, readers are referred to \S 1.1 of \cite{SSH} or \cite{Shape1}.
The essential heuristic is that morphisms and, hence, systems which are cofinal in one another are equivalent in pro-$\mathsf{Ab}$.

Appearing just above, of course, is the well-known inverse limit functor $\mathrm{lim}$, which we regard as a functor $\mathsf{Ab}^{\Lambda}\to\mathsf{Ab}$ for some fixed $\Lambda$; it will simplify discussion hereafter to additionally assume that $\Lambda$ is a lattice.
We adopt the following definitions.
\begin{defn} For any $\mathbf{G}=(G_x,p_{x,y},\Lambda)$ in $\mathsf{Ab}^\Lambda$,
$$\mathrm{lim}\,\mathbf{G}:=\{(g_x)\in\prod_{x\in\Lambda} G_x\mid p_{x,y}(g_y)=g_x\textnormal{ for all }x\leq y\textnormal{ in }\Lambda\},$$ 
and for any $\mathbf{g}:\mathbf{G}\to\mathbf{H}$ in $\mathsf{Ab}^{\Lambda}$, $$\mathrm{lim}\,\mathbf{g}:=\prod_{x\in\Lambda} g_x:\mathrm{lim}\,\mathbf{G}\to\mathrm{lim}\,\mathbf{H}.$$ 
\end{defn}
This functor is \emph{left exact}, meaning that $\mathrm{lim}$ applied to a short exact sequence
\begin{align}
\label{shortexactsequence}
\mathbf{0}\to\mathbf{G}\to\mathbf{H}\to\mathbf{K}\to\mathbf{0}
\end{align}
will, in general, only preserve exactness at the $\mathrm{lim}\,\mathbf{G}$ and $\mathrm{lim}\,\mathbf{H}$ terms.
When coupled with its \emph{derived functors} $\mathrm{lim}^s$, on the other hand, $\mathrm{lim}$ does preserve the exactness of (\ref{shortexactsequence}), within a long exact sequence of the following form:
$$0\to\mathrm{lim}\,\mathbf{G}\to\mathrm{lim}\,\mathbf{H}\to\mathrm{lim}\,\mathbf{K}\to\mathrm{lim}^1\,\mathbf{G}\to\mathrm{lim}^1\,\mathbf{H}\to\mathrm{lim}^1\,\mathbf{K}\to\mathrm{lim}^2\,\mathbf{G}\to\cdots$$
The functors $\mathrm{lim}^s$ admit complete characterizations along these lines --- or, simply, as the derived functors of $\mathrm{lim}$; readers are referred to \cite[Book III]{SSH} for a comprehensive introduction to this material.
Therein the following more concrete characterization appears as well \cite[Corollary 11.47]{SSH}; we precede it with a few further notational conventions.
\begin{notn}
Suppose that $\Gbf = (G_x, p_{x,y},\Lambda)$ is an inverse system of abelian groups where
$\Lambda$ is a lattice.
If $s>0$ and $\vec{x}$ is an $s$-tuple of elements of a lattice,
we will write $\meet \vec{x}$ for $\bigwedge_{i=0}^{s-1} x_i$. Moreover, if $\vec{x} \in \Lambda^s$, we will let $G_{\vec{x}}$ denote $G_{\meet \vec{x}}$; similarly, $p_{\vec{x},\vec{y}}$ will be used to denote
$p_{\meet \vec{x},\meet\vec{y}}$.
We will write $\Lambda^{[s]}$ for the subcollection of $\Lambda^s$ consisting of $\leq$-increasing $s$-tuples.
Lastly, we will adopt $\mathrm{lim}^0$ as an alternate notation for $\mathrm{lim}$.
\end{notn}
\begin{prop}
\label{Proposition:classical_lims} $\mathrm{lim}^s\,\Gbf\cong H^s(\mathcal{C}(\Gbf))$ for any $\Gbf\in\mathsf{Ab}^\Lambda$ and $s\geq 0$, where the cochain complex $\mathcal{C}(\Gbf)$ is defined as follows.
If $q \geq 0$, let $C^q(\Gbf)=\prod_{\vec{x} \in \Lambda^{[q+1]}} G_{\vec{x}}$.
Let $C^q (\Gbf) = 0$ otherwise.
For each $q \geq 0$ define
$\delta: C^q(\Gbf)  \rightarrow C^{q+1}(\Gbf)$
by
$$
\delta \Phi(\vec{x}):=
\sum_{i=0}^{q+1}(-1)^i p_{\vec{x},\vec{x}^i}(\Phi(\vec{x}^i)).
$$
In fact this construction determines a natural isomorphism of the functors $\mathrm{lim}^s$ and $H^s(\mathcal{C}(\,\cdot\,))$ from $\mathsf{Ab}^\Lambda$ to $\Lambda$.
\end{prop}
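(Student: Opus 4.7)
The plan is to identify $H^\bullet(\mathcal{C}(\cdot))$ with the right derived functor $\mathrm{lim}^\bullet$ of $\mathrm{lim}\colon \mathsf{Ab}^\Lambda \to \mathsf{Ab}$, using the uniqueness of effaceable $\delta$-functors. Three things need to be verified, in order: (i) a natural isomorphism $H^0(\mathcal{C}(\Gbf)) \cong \mathrm{lim}\,\Gbf$; (ii) that $\mathcal{C}\colon \mathsf{Ab}^\Lambda \to \mathrm{Ch}^{\geq 0}(\mathsf{Ab})$ is exact, so that short exact sequences in $\mathsf{Ab}^\Lambda$ give rise to long exact sequences in the $H^\bullet(\mathcal{C}(\cdot))$-groups; and (iii) the effacement property, i.e., that every $\Gbf$ admits a monomorphism into some $\mathbf{I}$ with $H^s(\mathcal{C}(\mathbf{I})) = 0$ for all $s \geq 1$.

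For (i), fix $\Phi \in C^0(\Gbf) = \prod_{x \in \Lambda} G_x$. For any $\vec{x} = (x_0,x_1) \in \Lambda^{[2]}$ one has $x_0 \leq x_1$ and hence $\meet\vec{x} = x_0$, so $(\delta\Phi)(\vec{x}) = p_{x_0,x_1}(\Phi(x_1)) - \Phi(x_0)$; this vanishes for every such $\vec{x}$ precisely when $\Phi \in \mathrm{lim}\,\Gbf$, and naturality in $\Gbf$ is immediate from the construction. For (ii), the preceding proposition's levelwise characterization of exactness in $\mathsf{Ab}^\Lambda$, combined with the exactness of products in $\mathsf{Ab}$, shows that each $C^q(\cdot)$ is an exact functor; the standard zig-zag lemma then produces the desired long exact cohomology sequence.

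Step (iii) is the main obstacle and the technical heart of the result. The natural candidate is the canonical cobar-type resolution $\Gbf \to \mathbf{I}^\bullet$ defined by $\mathbf{I}^q(x) := \prod_{\vec{y} \in \Lambda^{[q+1]},\, x \leq \meet\vec{y}} G_{\meet\vec{y}}$, with the evident projection bonding maps and alternating-sum coboundaries; one then takes $\mathbf{I} := \mathbf{I}^0$ as the effacing embedding. Two things must be checked. First, that $\mathrm{lim}\,\mathbf{I}^q = C^q(\Gbf)$ and that the augmented complex $\Gbf \to \mathbf{I}^\bullet$ is levelwise exact --- at each fixed $x$ the restricted index set $\{\vec{y} : x \leq \meet\vec{y}\}$ admits a contracting cosimplicial homotopy built from insertion of $x$ as a first coordinate. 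Second, and more delicately, that each $\mathbf{I}^q$ is itself $\mathrm{lim}^s$-acyclic for $s \geq 1$: this follows from the product structure, together with a flabbiness-style vanishing for the factors, which are supported on principal downsets. Granting these two points, $\mathrm{lim}^\bullet \Gbf$ is computed by applying $\mathrm{lim}$ to the resolution $\mathbf{I}^\bullet$, which reproduces $\mathcal{C}(\Gbf)$; the natural isomorphism follows. Rather than carry this acyclicity argument out in detail, we would refer to \cite[Chapter 11]{SSH}, where the construction is executed in precisely the generality needed here.
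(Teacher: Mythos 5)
Your steps (i) and (ii) are fine, and the overall strategy (a $\delta$-functor/acyclic-resolution argument with co-induced systems, deferring details to \cite{SSH}) is a legitimate route --- note for comparison that the paper offers no proof of this proposition at all, citing \cite[Corollary 11.47]{SSH}, and sketches only the alternating variant via Jensen's sheaf-cohomology lemma (Lemma \ref{jensen_lemma}). The genuine gap is in step (iii): the object $\mathbf{I}^q(x)=\prod_{\vec y\in\Lambda^{[q+1]},\,x\le\meet\vec y}G_{\meet\vec y}$ does not have the properties you ascribe to it. With your index condition the index sets \emph{grow} as $x$ decreases, so for $x\le x'$ the ``evident projection'' maps $\mathbf{I}^q(x)\to\mathbf{I}^q(x')$, the wrong direction for an object of $\mathsf{Ab}^\Lambda$; if you instead pad with zeros you do get a system, but its inverse limit is then (essentially) $0$, not $C^q(\Gbf)$. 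Moreover there is no augmentation $\Gbf\to\mathbf{I}^0$ at all, since at level $x$ it would require homomorphisms $G_x\to G_y$ for $y\ge x$, opposite to the bonding maps; and your contracting homotopy (inserting $x$ as a first coordinate) does not even typecheck, because $\Phi(x{}^\frown\vec y)$ lies in $G_x$ while the required value lies in $G_{\meet\vec y}=G_{y_0}$ with $y_0\ge x$ --- it only makes sense when the coefficients are constant on the upset of $x$, and in any case the upset of $x$ is cofinal in the lattice, so what such a contraction would compute is $\mathrm{lim}\,\Gbf$, not $G_x$. So as written the resolution, its limit identification, and its levelwise exactness all fail.

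The repair is to reverse the roles of up and down and to bound the \emph{top} coordinate of the tuple rather than its meet: set $\mathbf{I}^q(x):=C^q(\Gbf\restriction U_x)=\prod\{G_{\meet\vec y}\mid \vec y\in\Lambda^{[q+1]},\ y_q\le x\}$, where $U_x=\{y\in\Lambda\mid y\le x\}$, with restriction (projection) bonding maps; the condition must constrain $y_q$, since otherwise the $i=0$ face leaves the index set and the levelwise differential is undefined. Then the augmentation $g\mapsto(p_{\meet\vec y,\,x}(g))_{\vec y}$ exists and is monic, the augmented complex at level $x$ is contracted by appending the maximum $x$ of $U_x$ as a \emph{last} coordinate (this does typecheck, since $\meet(\vec y{}^\frown x)=\meet\vec y$), and $\mathrm{lim}\,\mathbf{I}^q=C^q(\Gbf)$ because every increasing tuple is bounded in a lattice. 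For acyclicity, observe $\mathbf{I}^q\cong\prod_{\vec y}E_{y_q}(G_{\meet\vec y})$, where $E_z(A)$ is the system equal to $A$ with identity bonds on the upset of $z$ and $0$ elsewhere: $E_z$ is right adjoint to evaluation at $z$, hence exact and injective-preserving, so each factor is $\limn{s}$-acyclic, and Proposition \ref{Proposition:productive_limits} handles the product. (In particular the factors are supported on principal \emph{upsets}; the downset-supported systems your version produces have no evident flabbiness-style vanishing.) Finally, a logical wrinkle: what you verify --- $\mathrm{lim}\,\mathbf{I}^q=C^q(\Gbf)$ plus $\limn{s}$-acyclicity --- is the acyclic-resolution argument, which suffices, but it is not the effacement property you announced; if you want literal effaceability of $H^\bullet(\mathcal{C}(\,\cdot\,))$, embed $\Gbf\hookrightarrow\prod_{y\in\Lambda}E_y(G_y)$ and contract each $\mathcal{C}(E_y(A))$, which is the constant-coefficient complex of the upset of $y$, by inserting its minimum $y$ as a first coordinate --- that is the setting in which your ``insert as first coordinate'' homotopy is correct.
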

For our purposes, however --- especially in Section \ref{Section:Hechlers} --- a cochain complex less constrained by the order-relations among elements of $\Lambda$ will be far more useful.
To that end, we record the following alternate characterization of $\mathrm{lim}^s$; this will be our working definition of the functor in all that follows.
\begin{notn} If $\sigma$ is a permutation, we let $\sgn(\sigma) = 1$ if $\sigma$ is even and $-1$ if $\sigma$ is odd.  
We will write $\sigma \vec{x}$
to denote the result of permuting the coordinates of
$\vec{x}$ with $\sigma$ so that the $i\Th$ coordinate of
$\sigma \vec{x}$ is $x_{\sigma(i)}$.
In addition, we will write $\Gbf\!\restriction\Gamma$ for the restriction of $\Gbf$ to a suborder $\Gamma$ of $\Lambda$.
\end{notn}
\begin{defn}
An element $\Phi \in \prod_{\vec{x} \in \Lambda^s} G_{\vec{x}}$ is \emph{alternating} if
whenever $\sigma$ is a permutation of $s$ and $\vec{x} \in \Lambda^s$,
$\Phi_{\sigma \vec{x}} = \sgn(\sigma) \Phi_{\vec{x}}$.
If $s \geq 0$, let $C^s_{alt}(\Gbf)$ denote the set of all elements of $\prod_{\vec{x} \in \Lambda^{s+1}} G_{\vec{x}}$
which are alternating.
Let $C_{alt}^s (\Gbf) = 0$ if $q < 0$.
For each $s \geq 0$, formally define
$\delta: C^s_{alt}(\Gbf) \rightarrow C^{s+1}_{alt}(\Gbf)$ exactly as in Proposition \ref{Proposition:classical_lims}, so that this $\delta$ differs from that one only in ranging over a wider collection of indices $\vec{x}$.
\end{defn}
\begin{thm}
$\mathrm{lim}^s\,\Gbf\cong H^s(\mathcal{C}_{alt}(\Gbf))$ for any $\Gbf\in\mathsf{Ab}^\Lambda$ and $s\geq 0$; in fact, $\mathrm{lim}^s$ and $H^s(\mathcal{C}_{alt}(\,\cdot\,))$ are naturally isomorphic functors $\mathsf{Ab}^\Lambda\to\mathsf{Ab}$.
\end{thm}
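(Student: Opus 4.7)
Since the cochain complex $\mathcal{C}(\Gbf)$ of Proposition \ref{Proposition:classical_lims} already computes $\mathrm{lim}^s\,\Gbf$, it suffices to construct a natural isomorphism $H^s(\mathcal{C}_{alt}(\Gbf)) \cong H^s(\mathcal{C}(\Gbf))$. There is an evident restriction map $r: \mathcal{C}_{alt}(\Gbf) \to \mathcal{C}(\Gbf)$ sending each alternating cochain on $\Lambda^{s+1}$ to its restriction on weakly increasing tuples $\Lambda^{[s+1]}$; since face tuples of weakly increasing tuples are themselves weakly increasing, $r$ is a natural chain map, and the problem reduces to showing that $r$ induces isomorphisms on cohomology.

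I would factor this through an intermediate \emph{normalized alternating} subcomplex $\mathcal{N}(\Gbf) \subseteq \mathcal{C}_{alt}(\Gbf)$ consisting of alternating cochains vanishing on every tuple with a repeated coordinate. That $\delta$ preserves this vanishing is a direct inspection: for $\vec{x}$ with $x_i = x_j$, the $i$th and $j$th face terms of $\delta \Phi(\vec{x})$ are identical up to sign and cancel, and all remaining faces still have the repeated coordinate. Choosing an ordering of each $(s+1)$-subset of $\Lambda$ yields a natural isomorphism $N^s(\Gbf) \cong \prod_{S \in [\Lambda]^{s+1}} G_{\meet S}$, exhibiting $\mathcal{N}(\cdot)$ as an exact functor from $\mathsf{Ab}^\Lambda$ to cochain complexes, and a direct calculation gives $H^0(\mathcal{N}(\Gbf)) = \mathrm{lim}\,\Gbf$.

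I would then identify $H^\bullet(\mathcal{N}(\cdot))$ with $\mathrm{lim}^\bullet$ via the universality of derived functors. The required effaceability reduces to the vanishing $H^{\geq 1}(\mathcal{N}(\mathbf{J})) = 0$ on each injective generator $\mathbf{J} = J^{(I)}_c$ of $\mathsf{Ab}^\Lambda$ --- the system with $(J^{(I)}_c)_x = I$ when $x \geq c$ and $0$ otherwise, for $c \in \Lambda$ and $I$ injective in $\mathsf{Ab}$. For such $\mathbf{J}$, $\mathcal{N}(\mathbf{J})$ is the $I$-coefficient cochain complex of the full simplex on $\Lambda_{\geq c}$, and is contracted by the explicit cone homotopy $(h\Phi)(\vec{y}) := \Phi(c,\vec{y})$, set to $0$ when $\vec{y}$ already contains $c$; a direct computation, using that all bonding maps of $\mathbf{J}$ are either the identity on $I$ or zero, verifies $hd + dh = \mathrm{id}$ on cochains of positive degree.

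The final and most delicate step is to show that the inclusion $\mathcal{N}(\Gbf) \hookrightarrow \mathcal{C}_{alt}(\Gbf)$ is a quasi-isomorphism, equivalently that the quotient complex is acyclic. The quotient takes values entirely in $2$-torsion --- the alternating property forces $2\Phi(\vec{x}) = 0$ whenever $\vec{x}$ has a repeated coordinate --- so no averaging argument is available, and the requisite contracting homotopy must be constructed by hand from the repetition operations on tuples, with signs carefully arranged to be compatible with alternation. This is the cosimplicial-alternating analogue of the classical acyclicity of the degenerate subcomplex of a simplicial abelian group underlying the Dold-Kan theorem. Granting this, the composition $H^\bullet(\mathcal{C}_{alt}(\Gbf)) \cong H^\bullet(\mathcal{N}(\Gbf)) \cong \mathrm{lim}^\bullet\,\Gbf$ yields the desired natural isomorphism of functors $\mathsf{Ab}^\Lambda \to \mathsf{Ab}$.
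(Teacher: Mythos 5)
Your route is genuinely different from the paper's: the paper deduces the theorem from Jensen's sheaf-cohomological identification $\limn{s}\Gbf\cong\check{H}^s((\Lambda,\tau);\mathcal{F}_{\Gbf})$ together with the facts that $\{U_x\}$ refines every cover and that \v{C}ech cohomology may be computed with alternating cochains (Serre), whereas you argue directly in $\mathsf{Ab}^\Lambda$ via the co-induced injectives $J^{(I)}_c$, exactness of the normalized complex $\mathcal{N}$, the cone contraction at injectives, and universality of $\delta$-functors. That part of your plan is sound: the injectives are as you describe, $N^s(\Gbf)\cong\prod_{S\in[\Lambda]^{s+1}}G_{\meet S}$ is exact in $\Gbf$, $H^0(\mathcal{N}(\Gbf))=\lim\Gbf$, and the cone homotopy $(h\Phi)(\vec{y})=\Phi(c,\vec{y})$ does contract $\mathcal{N}(J^{(I)}_c)$ in positive degrees, so $H^\bullet(\mathcal{N}(\,\cdot\,))\cong\limn{\bullet}$ as functors.

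The genuine gap is your last step: you assert, but do not prove, that $\mathcal{N}(\Gbf)\hookrightarrow\mathcal{C}_{alt}(\Gbf)$ is a quasi-isomorphism, and you explicitly ``grant'' the acyclicity of the degenerate part. This step cannot be waved away under the paper's definition of alternating (which does not demand vanishing on repeated tuples): whenever some $G_x$ has $2$-torsion --- which the paper's $\Om$-systems permit --- the two complexes differ, and you cannot instead run your universality argument directly on $\mathcal{C}_{alt}$, because $\Gbf\mapsto C^s_{alt}(\Gbf)$ is not an exact functor (on a degenerate orbit it is the left-exact-only $2$-torsion subgroup functor $G_{\meet\vec{x}}[2]$), so the comparison back to $\mathcal{C}_{alt}$ is exactly where the remaining content sits. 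The claim is true, and it can be closed with tools you already have, without any hand-built signed homotopy: both $\mathcal{N}$ and the complex $\mathcal{D}$ of alternating cochains vanishing on all non-repeating tuples are subcomplexes (the same pairwise cancellation you cite), so $\mathcal{C}_{alt}=\mathcal{N}\oplus\mathcal{D}$ as cochain complexes; $\mathcal{D}(\Gbf)$ depends only on the subsystem of $2$-torsion subgroups, a system of vector spaces over the field with two elements, and on that subcategory $C^s_{alt}$ \emph{is} exact and your cone contraction at $J^{(I)}_c$ works verbatim without normalization (the computation never used vanishing on repeats). Hence over such systems both $H^\bullet(\mathcal{N}(\,\cdot\,))$ and $H^\bullet(\mathcal{C}_{alt}(\,\cdot\,))$ are universal $\delta$-functors agreeing with $\lim$ in degree $0$, the inclusion is a map of $\delta$-functors, and it is therefore an isomorphism on cohomology; equivalently $\mathcal{D}$ is acyclic, which is what you needed. (Note that the paper's own citation of Serre concerns alternating cochains that vanish on repeated indices, i.e.\ your $\mathcal{N}$, so this torsion comparison is implicit there as well; for the torsion-free systems such as $\Acal$ the two complexes simply coincide.)
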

For completeness, we sketch the proof of the first assertion, from which the second follows fairly directly.
\begin{defn} As above, let $\Lambda$ be a lattice and let $\tau$ denote the topology on $\Lambda$ generated by the sets $U_x:=\{y\in\Lambda\mid y\leq x\}$, where $x$ ranges through $\Lambda$. For any inverse system $\Gbf$ indexed by $\Lambda$, let $\mathcal{F}_{\Gbf}$ denote the sheaf on $(\Lambda,\tau)$ defined by $U\mapsto \mathrm{lim}\,\Gbf\!\restriction\! U$, together with the natural restriction maps $\mathcal{F}_{\Gbf}(U)\to\mathcal{F}_{\Gbf}(V)$ for any $V\subseteq U$ in $\tau$.
\end{defn}
Underlying the following lemma is the fact that the above operation induces an equivalence of categories between that of the inverse systems of abelian groups indexed by $\Lambda$ and that of the sheaves of abelian groups on $(\Lambda,\tau)$; $\check{H}^s(X;\mathcal{F})$ denotes the $s\Th$ \emph{\v{C}ech cohomology group} of a topological space $X$ with coefficients in the sheaf $\mathcal{F}$.
\begin{lem}[\hspace{.001 cm}\cite{JENSEN}] \label{jensen_lemma} $\mathrm{lim}^s\,\Gbf\cong\check{H}^s((\Lambda,\tau);\mathcal{F}_{\Gbf})$ for all $s\geq 0$.
\end{lem}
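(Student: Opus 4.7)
The plan is to establish an equivalence of abelian categories between $\mathsf{Ab}^\Lambda$ and the category of sheaves of abelian groups on the Alexandrov space $(\Lambda,\tau)$, under which the inverse limit functor corresponds to global sections; the coincidence of their right-derived functors then follows formally, and only the comparison of sheaf with \v{C}ech cohomology remains.

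First, I would verify that $U \mapsto \mathrm{lim}\,\Gbf\!\restriction\! U$ indeed defines a sheaf on $(\Lambda,\tau)$. Because the principal down-sets $U_x$ form a basis of $\tau$ closed under finite intersections (as $U_x \cap U_y = U_{x \meet y}$), the sheaf axiom for an arbitrary open cover $\{V_i\}$ of $U$ reduces, after refinement, to the universal property of the inverse limit along the collection of $U_x \subseteq U$.

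Second, I would show that $\Gbf \mapsto \mathcal{F}_\Gbf$ is one half of an equivalence of categories, with quasi-inverse sending a sheaf $\mathcal{F}$ to the inverse system $(\mathcal{F}(U_x))_{x \in \Lambda}$ with bonding maps given by restriction along $U_y \subseteq U_x$ for $y \leq x$; full faithfulness and essential surjectivity are immediate from the fact that any sheaf on $(\Lambda,\tau)$ is determined by its values on the basis $\{U_x\}$. Under this equivalence the functor $\mathrm{lim}:\mathsf{Ab}^\Lambda \to \mathsf{Ab}$ matches the global sections functor $\Gamma(\Lambda,-)$, since $\mathcal{F}_\Gbf(\Lambda) = \mathrm{lim}\,\Gbf$; as equivalences of abelian categories preserve right-derived functors, we obtain $\mathrm{lim}^s\,\Gbf \cong H^s((\Lambda,\tau);\mathcal{F}_\Gbf)$ for every $s \geq 0$, where the right-hand side denotes sheaf cohomology.

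The main obstacle is the remaining identification of sheaf cohomology with \v{C}ech cohomology, since Alexandrov spaces built from arbitrary lattices need not be paracompact and the two theories can diverge in general. The resolution exploits that every basic open $U_x$ has $x$ as a maximum element, so the global sections functor on $U_x$ coincides with evaluation at $x$, and is therefore exact; consequently $H^s(U_x;\mathcal{F}\!\restriction\! U_x) = 0$ for every $s \geq 1$ and every sheaf $\mathcal{F}$. Since finite intersections of members of the basis are again principal down-sets of the same form, the cover $\{U_x : x \in \Lambda\}$ is acyclic for every sheaf $\mathcal{F}_\Gbf$, and Leray's theorem then yields $\check{H}^s((\Lambda,\tau);\mathcal{F}_\Gbf) \cong H^s((\Lambda,\tau);\mathcal{F}_\Gbf)$, completing the chain of isomorphisms.
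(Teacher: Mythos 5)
Your proposal is correct and follows exactly the route the paper indicates, namely the equivalence between $\mathsf{Ab}^\Lambda$ and sheaves on $(\Lambda,\tau)$ identifying $\mathrm{lim}$ with global sections, followed by an acyclic-cover comparison of sheaf and \v{C}ech cohomology; the paper itself simply cites \cite{JENSEN} for this, so you have supplied the argument it leaves implicit. The only point to make explicit is the passage from the \v{C}ech cohomology of the cover $\{U_x \mid x\in\Lambda\}$ to that of the space $(\Lambda,\tau)$, which follows since this cover refines every open cover and is therefore cofinal in the system over which $\check{H}^s$ is computed.
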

Our characterization of $\mathrm{lim}^s\,\Gbf$ follows immediately from Lemma \ref{jensen_lemma} together with the following observations:
\begin{itemize}
\item $\{U_x\mid x\in\Lambda\}$ refines all open covers of $(\Lambda,\tau)$.
\item $\mathrm{lim}\,\Gbf\!\restriction\! U_x\,=G_x$ for all $x\in\Lambda$.
\item \v{C}ech cohomology groups may be computed using alternating cochains \cite[I.20 Proposition 2]{SERRE}.
\end{itemize}
See \cite[Section 2.2]{SVHDL} for more detailed argumentation of this characterization in the particular case of the inverse system $\mathbf{A}$; as noted, this inverse system forms the template for those of the next section, and indeed, for those of the remainder of the paper.

One last feature of derived limits is worth recording before proceeding; a variant appears as Corollary 12.15 in \cite{SSH}.
\begin{prop}
\label{Proposition:productive_limits}
For any $s\geq 0$ and collection $\{\Gbf_i\mid i\in I\}$ of pro-abelian groups, the groups $\prod_{i\in I}\mathrm{lim}^s\,\Gbf_i$ and $\mathrm{lim}^s\,\prod_{i\in I}\Gbf_i$ are isomorphic.
In particular, since in abelian categories finite products and coproducts coincide, $\mathrm{lim}^s$ is finitely additive.
\end{prop}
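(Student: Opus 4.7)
The plan is to reduce the statement to one about the alternating cochain complexes characterizing $\mathrm{lim}^s$, and then to exploit the exactness of products in $\mathsf{Ab}$. Given $\Gbf_i = (G^i_x, p^i_{x,y}, \Lambda_i)$ for $i \in I$, I would first reindex everything over a single lattice. The product lattice $\Lambda := \prod_{i \in I} \Lambda_i$, equipped with the coordinatewise order (so that $\meet$ is computed coordinatewise), admits surjective meet-preserving projections $\pi_i \colon \Lambda \to \Lambda_i$; the product $\prod_i \Gbf_i$ in pro-$\mathsf{Ab}$ is then realized as the levelwise product in $\mathsf{Ab}^\Lambda$ of the pullbacks $\pi_i^* \Gbf_i$, i.e., as the inverse system whose value at $\vec{x} \in \Lambda$ is $\prod_i G^i_{\pi_i(\vec{x})}$.

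With this setup in hand, the core computation is direct. For any family $\{\mathbf{H}_i\} \subseteq \mathsf{Ab}^\Lambda$, one has $\bigl(\prod_i \mathbf{H}_i\bigr)_{\meet \vec{x}} = \prod_i (\mathbf{H}_i)_{\meet \vec{x}}$, and both the alternating condition and the coboundary map are applied coordinatewise, so there is a natural identification
$$\mathcal{C}_{alt}^{\bullet}\Bigl(\prod_i \pi_i^* \Gbf_i\Bigr) \;\cong\; \prod_i \mathcal{C}_{alt}^{\bullet}(\pi_i^* \Gbf_i)$$
of cochain complexes in $\mathsf{Ab}$. Since products in $\mathsf{Ab}$ are exact, they commute with passage to cohomology, yielding $\mathrm{lim}^s \prod_i \Gbf_i \cong \prod_i \mathrm{lim}^s \pi_i^* \Gbf_i$. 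The finite additivity clause then follows immediately from this, since in any abelian category finite products coincide with finite coproducts.

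The one point requiring genuine argumentation is the identification $\mathrm{lim}^s \pi_i^* \Gbf_i \cong \mathrm{lim}^s \Gbf_i$: this is a cofinality statement for derived limits, justified by the observation that each $\pi_i$ is a surjective lattice homomorphism whose fibers are cofinal in $\Lambda$. It can be obtained through Lemma \ref{jensen_lemma}, since $\pi_i$ induces a continuous map $(\Lambda,\tau) \to (\Lambda_i,\tau_i)$ of exactly the sort that yields an isomorphism on \v{C}ech cohomology with the appropriate coefficient sheaves; alternatively, one may construct a quasi-isomorphism $\mathcal{C}_{alt}(\Gbf_i) \to \mathcal{C}_{alt}(\pi_i^* \Gbf_i)$ directly, by precomposition with $\pi_i$ in each degree. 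I expect this cofinality verification to be the only genuine obstacle; the remainder of the argument is formal.
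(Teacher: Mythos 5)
Your overall architecture is sensible, and the easy half of it is correct: for systems indexed by a common lattice, the alternating cochain complex of a levelwise product is the product of the complexes, and cohomology commutes with products because products are exact in $\mathsf{Ab}$. Note for context that the paper offers no proof at all here (it cites a variant, Corollary 12.15 of \cite{SSH}), and that in every application made of the proposition the factors $\Gbf_k$ share the single index lattice $\Om$ and the products and direct sums are levelwise over $\Om$, so this easy half is already all that the paper actually uses. The gap is at the step you yourself flag as the crux, the isomorphism $\mathrm{lim}^s\,\pi_i^*\Gbf_i\cong\mathrm{lim}^s\,\Gbf_i$: you assert it rather than prove it, and the one concrete justification you give is false. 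The fibers of $\pi_i$ are \emph{not} cofinal in $\Lambda=\prod_j\Lambda_j$ --- the fiber over $x_i$ is bounded in the $i$-th coordinate --- and fiber-cofinality would not be the right criterion anyway. Nor can you shortcut by saying that $\pi_i^*\Gbf_i$ and $\Gbf_i$ are isomorphic in pro-$\mathsf{Ab}$ and hence have the same derived limits: $\mathrm{lim}^s$ is computed from a chosen indexed system, and its invariance under exactly this kind of reindexing is the point in question, not something that may be presupposed.

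To close the gap you need a genuine argument at this step, for instance: (a) a Leray/Vietoris--Begle-type degeneration for the continuous map $(\Lambda,\tau)\to(\Lambda_i,\tau_i)$, which reduces (via a Grothendieck/Fubini spectral sequence for $\lim_{\Lambda_i}\circ\lim_{\prod_{j\neq i}\Lambda_j}$) to the fact that constant systems over the complementary factor are $\mathrm{lim}$-acyclic --- true for lattices, since the nerve of the canonical cover $\{U_x\}$ is a full simplex; or (b) an explicit cochain homotopy: fix a section $\sigma$ of $\pi_i$, observe $\sigma^*\pi_i^*=\mathrm{id}$ on $C^{\bullet}_{alt}(\Gbf_i)$, and construct a homotopy between $\pi_i^*\sigma^*$ and the identity on $C^{\bullet}_{alt}(\pi_i^*\Gbf_i)$, exploiting the fact that all poset maps involved preserve the $i$-th coordinate while the coefficients of $\pi_i^*\Gbf_i$ depend only on that coordinate. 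The statement you need is true, but as written your proposal replaces its only substantive content with an appeal to a ``cofinality'' that does not hold; the parts of your argument that are complete do, however, suffice for the finite additivity clause and for the levelwise-product form of the proposition that the paper invokes.
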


We conclude this section by recalling a well-known characterization of \emph{weakly compact cardinals}; see Theorem 7.8 of \cite[Ch. 2]{higher_infinite}.

\begin{thm} \label{wc_char} 
An uncountable cardinal $\kappa$ is weakly compact if whenever $n \in \omega$, $\nu < \kappa$, and
$f:[\kappa]^n \to \nu$ there is a set $H \subseteq \kappa$ with $|H| = \kappa$ such that
$f \restriction [H]^n$ is constant. 
\end{thm}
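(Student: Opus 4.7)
The statement (cited to Kanamori's Theorem 7.8) is a characterization of weak compactness, so I would argue both directions. The reverse direction --- that the displayed partition property entails weak compactness --- is essentially routine: $\kappa \to (\kappa)^2_2$ directly yields regularity by coloring the graph of a putative $<\kappa$-long cofinal sequence, and the strong-limit property and tree property both follow by coloring pairs of structures (functions $\lambda \to 2$ or nodes of a putative $\kappa$-Aronszajn tree) according to how they first differentiate, with any homogeneous set of size $\kappa$ yielding the necessary contradiction or branch.

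For the forward direction --- weak compactness implies the full partition property --- I would work through the elementary-embedding characterization: $\kappa$ is weakly compact iff for every transitive $M \models \mathrm{ZFC}^{-}$ of cardinality $\kappa$ with $\kappa \in M$ there exists a nontrivial elementary embedding $j : M \to N$ with $N$ transitive and critical point $\kappa$. Given $f : [\kappa]^n \to \nu$ with $\nu < \kappa$, choose such an $M$ containing $f$ and consider the normal $M$-ultrafilter $U := \{X \in P(\kappa) \cap M : \kappa \in j(X)\}$ associated to $j$. A Rowbottom-style argument then produces a homogeneous $H$ of size $\kappa$: iteratively, for each $1 \leq i \leq n$, select a set in $U$ on which an ``$i$-th coordinate'' coloring derived from $f$ via $j$ is constant, and intersect; the surviving set is in $U$, so in particular of cardinality $\kappa$, and pulls back to a homogeneous set for $f$.

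Alternatively and more elementarily, one may bootstrap the basic relation $\kappa \to (\kappa)^2_2$ --- itself derivable from the tree property by reformulating a putative counterexample as a $\kappa$-Aronszajn tree --- in two stages: first, raise the number of colors from $2$ to arbitrary $\nu < \kappa$ by iterating through indicator-function colorings of individual color classes; then, raise the exponent from $2$ to arbitrary $n < \omega$ by induction on $n$, at each step applying the inductive hypothesis to ``tail'' colorings of the form $f(\alpha, \cdot) : [\kappa \setminus (\alpha + 1)]^n \to \nu$ and then pigeonholing on the color values realized. The principal obstacle throughout either approach is arranging that homogeneous sets retain cardinality $\kappa$ across iterated and diagonal constructions --- a technicality systematically managed via the regularity and strong inaccessibility of $\kappa$, but one that the embedding approach, through its direct use of the $\kappa$-completeness of $U$, elegantly sidesteps.
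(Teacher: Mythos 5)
The paper offers no proof of this statement to compare against: it is quoted directly from Kanamori (Theorem 7.8 of \cite{higher_infinite}), and the only direction the paper actually uses (through Lemma \ref{uniformizing_lemma}) is that a weakly compact $\kappa$ satisfies $\kappa \to (\kappa)^n_\nu$ for all $n<\omega$ and $\nu<\kappa$. So the only question is whether your sketch correctly proves the classical characterization. Your primary route --- the elementary-embedding characterization together with a Rowbottom-style induction using the $M$-ultrafilter $U$ --- is the standard textbook argument and is right in outline, but it glosses the one point where genuine care is required: $U$ is only $M$-complete and $M$-normal, i.e., $\kappa$-completeness and diagonal intersections apply only to sequences lying in $M$. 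In the inductive step you produce $\kappa$-many homogeneous sets $\langle H_\alpha \mid \alpha<\kappa\rangle$, and you must arrange that this sequence (or a definable surrogate for it) is available to $M$ before normality can be invoked; this is exactly where one uses $|M|=\kappa$, closure of $M$ under ${<}\kappa$-sequences, and a canonical choice of witnesses. A sketch that simply ``selects a set in $U$ for each coordinate and intersects'' has not yet engaged with this.

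The ``more elementary'' alternative has a more serious gap. Raising the exponent by applying the inductive hypothesis to the tail colorings $f(\alpha,\cdot)$ and then ``pigeonholing on the color values realized'' requires a single set of size $\kappa$ that diagonalizes $\kappa$-many homogeneous sets $H_\alpha$, and regularity plus strong inaccessibility do not provide this: coordinating $\kappa$-many arbitrary size-$\kappa$ sets is precisely what normality (diagonal intersection in the $M$-ultrafilter) or a fresh ramification/tree-property argument at each exponent step accomplishes, and it is why the classical proofs either run the Erd\H{o}s--Tarski ramification (levels small by inaccessibility, branch from the tree property, then the inductive hypothesis on the induced $n$-ary coloring) or use the embedding. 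Similarly, passing from $2$ colors to arbitrary $\nu<\kappa$ by iterating indicator colorings of color classes terminates only for finite $\nu$; for infinite $\nu<\kappa$ one again needs the ramification or ultrafilter argument rather than an iteration-and-intersection scheme. So your embedding route stands, modulo the amenability-to-$M$ caveat, but the bootstrap route as described would not go through; since the ``technicality'' you defer to inaccessibility is in fact the crux, the write-up should either commit to the embedding proof in detail or replace the bootstrap with the ramification argument.
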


This is the only property of weakly compact cardinals which we will need below, and we will invoke it solely by way of Lemma \ref{uniformizing_lemma}, which follows from Theorem \ref{wc_char} together with arguments recorded in \cite{SVHDL}.

\section{$\Om$-systems of abelian groups}

In this section we will introduce some specialized terminology and symbols which will render the results of later
sections easier to state. 
The main notion introduced here is that of an $\Om$-system of abelian groups; these appear in calculations of the strong homology groups of countable disjoint unions of compact metric spaces. 

An \emph{$\Om$-system of abelian groups $\Gcal$} is specified by
an indexed collection $\{G_{n,k} \mid n,k \in \omega\}$
of finitely generated abelian groups along with compatible homomorphisms
$p_{n,j,k}: G_{n,k} \to G_{n,j}$ for
each $n$ and $j \leq k$.
Such data gives rise to the following additional objects:
\begin{itemize}

\item 
For each $x \in \Om$ define 
${\displaystyle G_x := \bigoplus_{n=0}^\infty G_{n,x(n)}}$
and
${\displaystyle \overline{G}_x := \prod_{n=0}^\infty G_{n,x(n)}}$.
We regard $G_x$ as a subset of $\overline{G}_x$.

\item
For each $x \leq y \in \Om$ a homomorphism
$p_{x,y}:\overline{G}_y \to \overline{G}_x$
defined by $p_{x,y} := \prod_{n=0}^\infty p_{n,y(n),x(n)}$.

\end{itemize}
Observe that $p_{x,y}$ maps $G_y$ into $G_x$.
We will denote the inverse system $(G_x,p_{x,y},\Om)$
by $\Gbf$ and $(\overline{G}_x,p_{x,y},\Om)$ by $\overline{\Gbf}$. We write $\overline{\Gbf}/\Gbf$ for the inverse system $(\overline{G}_x/G_x,p^*_{x,y},\Om)$ in which $p^*_{x,y}$ is the mapping of quotient groups induced by $p_{x,y}$.

Finally, define $\Gbf_k$ to be the inverse system indexed by $\Om$ where
$(G_k)_x$ consists of the elements of $G_x$ with support contained
in the $k\Th$ coordinate.  A crucial point in what follows is the fact that $\Gbf\cong\bigoplus_{k\in\omega}\Gbf_k$ in the category of pro-abelian groups.

We will see in Section \ref{limnadd_cpt_supt_section} that the question of the $\limn{s}$-additivity of $\Om$-systems is deeply related to that of the additivity of strong homology. 
For now, to provide intuition, we merely state a main context in which $\Om$-systems arise. 
Given compact metric spaces $\{X_n\mid n\in\omega\}$, we have a resolution of each $X_n$ as a sequence of compact polyhedra $X_{n,k}$. 
For any given $s$, we obtain an $\Om$-system of abelian groups $\Gcal$ by taking the homology pro-groups $H_s$ of these resolutions. 
There is also a standard related way of constructing a resolution of $\coprod_{n \in \omega} X_n$ (see e.g. \cite[Theorem 6]{SHINA}). For $s$ as above, the pro-group $\Gbf$ is the $s\Th$ homology pro-group of this resolution. 

A classical obstruction to the additivity of strong homology is the nonvanishing of the higher derived limits of the pro-abelian group $\Abf$ given by the $\Omega$-system $\Acal$ in which
$A_{n,k} = \Z^k$ and $p_{n,j,k}:\Z^{k} \to \Z^j$ is the projection onto the first $j$ coordinates ($\overline{\Abf}$ is denoted $\Bbf$ in, e.g., \cite{SVHDL}, \cite{SHINA}).
This is illustrated in the following theorem concerning countable topological sums of the $d$-dimensional Hawaiian earring $Y^{(d)}$; for concreteness, readers may first fix a basepoint $*$ of $S^d$ and then regard $Y^{(d)}$ as the subspace of $\prod_{i=0}^\infty S^d$ consisting of those elements for which at most one coordinate is not $*$.

\begin{thm}[\hspace{.001 cm}\cite{SHINA}]
For $d \geq 1$ and $p$,
$$
\overline{H}_p(Y^{(d)}) \approx
\begin{cases}
0, & p \ne 0,d \\
\prod_{i=0}^\infty \Zbb, & p = d \\
\Zbb, & p = 0
\end{cases}
\qquad  \textrm{and} \qquad
\overline{H}_p(\coprod_{i=0}^\infty  Y^{(d)}) \approx
\begin{cases}
0, & p  > d \\
\lim \Abf, & p = d \\
\limn{d-p} \Abf, & p < d \textrm{ and } p \ne 0 \\
\limn{d} \Abf \oplus \big( \bigoplus_{i \in \omega} \Zbb \big), & p = 0
\end{cases}
$$
where $\overline{H}_p(\cdot)$ denotes the $p\Th$ strong homology group.
\end{thm}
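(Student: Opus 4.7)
The plan is to compute both strong homology groups by resolving each space as an inverse system of compact polyhedra and extracting $\overline{H}_\bullet$ from the derived limits of the associated homology pro-groups. For the compact case $Y^{(d)}$: since $Y^{(d)}$ is a metric compactum, its strong homology agrees with Steenrod homology there, and it admits a resolution by the sequence $X_k = \bigvee_{i<k} S^d$ with bonding maps that collapse the $S^d$ factors beyond the first $j$. These satisfy $H_0(X_k) \cong \Zbb$, $H_d(X_k) \cong \Zbb^k$, and $0$ elsewhere, with bonding maps the identity on $H_0$ and the coordinate projection on $H_d$. Since all these bonding maps are surjective, $\limn{1}$ vanishes in every degree, and the Milnor short exact sequence for Steenrod homology collapses to $\overline{H}_p(Y^{(d)}) \cong \lim H_p(X_k)$, giving the stated values directly.

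For $\coprod_n Y^{(d)}$ I would invoke the $\Omega$-indexed resolution alluded to in the paragraph preceding the theorem and constructed in \cite[Theorem 6]{SHINA}. For each $x \in \Omega$ the compact polyhedron $X_x$ is a suitable wedge-type construction with $H_d(X_x) \cong A_x = \bigoplus_n \Zbb^{x(n)}$ whose bonding maps realize the coordinate projections of $\Acal$, so that the $d$-th homology pro-group is exactly $\Abf$; meanwhile $H_0(X_x) \cong \bigoplus_n \Zbb$ is the constant pro-group (one $\Zbb$ per component of the disjoint union), and $H_p(X_x) = 0$ for all other $p$. The strong homology then follows from the spectral sequence $E_2^{s,t} = \limn{s} H_t(X_x) \Rightarrow \overline{H}_{t-s}(\coprod_n Y^{(d)})$: with only two nonzero rows, at $t = 0$ and $t = d$, separated by $d \geq 1$, no nontrivial differentials are possible and the sequence degenerates at $E_2$. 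Since $\limn{s}$ of the constant pro-group $\bigoplus_n \Zbb$ vanishes for $s \geq 1$, the row $t = 0$ contributes only to $\overline{H}_0$, yielding the $\bigoplus_n \Zbb$ summand; the row $t = d$ contributes $\limn{d-p} \Abf$ to $\overline{H}_p$ for $0 \leq p \leq d$, and nothing for $p > d$.

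The main obstacle is the careful construction of the $\Omega$-indexed resolution for the disjoint union and the verification that its homology pro-groups are as described --- in particular, that $H_0$ is the constant pro-group and that $H_d$ realizes $\Abf$ on the nose. One must ensure that the countably many components remain separated throughout the approximation, so that the $H_0$ pro-group does not acquire nontrivial bonding maps. Setting up the spectral sequence for strong homology in terms of the $\limn{s}$-derived limits of the homology pro-groups of a resolution, and verifying its degeneration and convergence, is equally central; while this is standard material in \cite{SHINA, SSH}, the $\Omega$-indexing is less routine than the countable-cofinality setting and warrants care.
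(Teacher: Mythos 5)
The paper does not prove this theorem; it is quoted from Marde\v{s}i\'{c}--Prasolov \cite{SHINA}, and your outline follows essentially the same route as that source (and as the paper's own analogous computations in the proof of Theorem \ref{Om_add_cpt_supt}): resolve each space by compact polyhedra, identify the homology pro-groups, and read off $\overline{H}_\bullet$ from derived limits. Your compact case is fine. Two points in the disjoint-union case deserve sharpening. First, the ``spectral sequence $E_2^{s,t}=\limn{s}H_t(\Xbf)\Rightarrow\overline{H}_{t-s}$'' is not something one can simply invoke for an $\Om$-indexed resolution: the rigorous substitute in \cite{SHINA,SSH} is the pair of Miminoshvili exact sequences together with their corollaries (recalled in this paper as Theorem \ref{M_seqs} and Corollaries \ref{M_cor_1} and \ref{M_cor_2}). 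In the present situation --- homology pro-groups equal to $\Abf$ in degree $d$, the constant system $\bigoplus_\omega\Zbb$ in degree $0$, and $0$ elsewhere --- these sequences deliver exactly the ``two-row, no differentials'' computation you describe, including the possibly nonzero groups in negative degrees $p<0$; phrasing the argument this way avoids any convergence worries for the unbounded stage filtration (which is precisely what the second Miminoshvili sequence controls). Along the way you should also justify that $\limn{s}$ of the constant $\Om$-indexed system $\bigoplus_\omega\Zbb$ vanishes for $s\geq 1$; this is true because $\Om$ is a lattice, e.g.\ via Lemma \ref{jensen_lemma}, since the cover by the sets $U_x$ is closed under intersections and has a full simplex as nerve. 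Second, at $p=0$ degeneration alone yields only a short exact sequence $0\to\limn{d}\Abf\to\overline{H}_0\to\bigoplus_\omega\Zbb\to 0$ (the quotient being the edge map to $\lim H_0$); to get the stated direct sum you must add that this extension splits because $\bigoplus_\omega\Zbb$ is free abelian. With these repairs your argument matches the standard proof.
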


In particular, strong homology is additive for a countable sum of $d$-dimensional Hawaiian earrings if and only if
$\limn{s} \Abf = 0$ for $0 < s \leq d$.

\section{Higher coherence and the derived limit $\limn{s}$}

The study of the triviality of coherent families of functions indexed by $^\omega\omega$ dates to \cite{SHINA}; there, Marde\v{s}i\'{c} and Prasolov showed that the additivity of strong homology implies that every coherent family is trivial.
Higher-dimensional variants of these notions were introduced in \cite{SHDLST}; these encoded the behaviors of the higher derived limits associated to the system $\mathcal{A}$ described above.
In this section, we further generalize these notions to $s$-coherence and $s$-triviality for arbitrary $\Om$-systems $\Gcal$, and we analyze their relationship to these systems' associated higher derived limits.

Fix an $\Om$-system of abelian groups $\Gcal$ and an $s \geq 1$.
\begin{defn}
$\Phi\in C_{alt}^{s-1}(\overline{\Gbf})$ is \emph{an $s$-coherent family for $\Gcal$} if
$\delta \Phi$ is an element of $C_{alt}^s(\Gbf) \subseteq C_{alt}^s(\overline{\Gbf})$.
$\Phi$ is \emph{an $s$-trivial family for $\Gcal$}
if there is a $\Psi\in C_{alt}^{s-1}(\overline{\Gbf})$ such that $\delta \Psi = 0$
and $\Psi - \Phi \in C_{alt}^{s-1}(\Gbf)$.
\end{defn}
If $\Gcal$ is clear from context, we will simply write ``$\Phi$ is $s$-coherent'' or ``$\Phi$ is $s$-trivial.''
Observe that $\Phi$ is $s$-coherent if and ony if it represents an element of 
$\limn{s-1} \overline{\Gbf}/\Gbf$.
Similarly $\Phi$ is $s$-trivial if and only if it represents an element of the range of the map
$\limn{s-1} \overline{\Gbf} \to \limn{s-1} \overline{\Gbf}/\Gbf$.

We will often consider the subfamilies $\Phi\restriction X$ of such $\Phi$ associated to some subset $X$ of $\Omega$.
By Lemmas \ref{tail-cof_criterion} and \ref{triv_from_restriction} below, when $X$ is $\leq^*$-cofinal, conclusions about these subfamilies tend to apply to $\Phi$ itself.

\begin{remark}
If $\Gcal$ is the system $\Acal$ mentioned before,
then the above definition of $s$-trivial is equivalent to the one given in \cite[Lemma 2.5]{SVHDL}.
The two definitions agree when $\limn{1} \overline{\Gbf} = 0$ but in our general
setting the present definition seems more appropriate.
To see that such limits may indeed fail to vanish, consider the $\Om$-system $\mathcal{Z}$ in which $Z_{n,k} = \Zbb$
and $p_{n,k,k+1}(z) = 2z$; in this case, $\limn{1} \overline{\mathbf{Z}}=\prod_\omega \Z_2/\Z \ne 0$, where $\Z_2$ denotes the $2$-adic integers.
Such systems arise in the computation of the strong homology groups of sums of dyadic solenoids.
\end{remark}

Though the behavior of $\limn{1}\overline{\Gbf}$ may vary with our choice of $\Om$-system $\Gcal$, the following lemma shows that its higher derived limits will not.

\begin{lem} \label{limn_barG_vanish}
Let $\Gcal$ be an $\Om$-system of abelian groups.
If $s \geq 2$, then $\limn{s} \overline{\Gbf} = 0$.
\end{lem}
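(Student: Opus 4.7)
The plan is to exploit the product structure $\overline{G}_x = \prod_n G_{n,x(n)}$ to reduce the claim to the classical vanishing theorem for $\omega$-indexed inverse systems.

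First, I would observe that $\overline{\mathbf{G}}$ is the infinite product $\prod_n \overline{\mathbf{G}}^{(n)}$ in $\mathsf{Ab}^{\Om}$, where $\overline{\mathbf{G}}^{(n)}$ denotes the $\Om$-indexed system $x \mapsto G_{n,x(n)}$ with coordinate-wise bonding maps. Applying Proposition \ref{Proposition:productive_limits} then reduces the lemma to showing $\limn{s} \overline{\mathbf{G}}^{(n)} = 0$ for every $n$ and every $s \geq 2$.

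Second, for fixed $n$, I would identify $\overline{\mathbf{G}}^{(n)}$ with the pullback of the $\omega$-indexed inverse system $\mathbf{H}^{(n)} := (G_{n,k}, p_{n,j,k}, \omega)$ along the order-preserving surjection $\pi_n: \Om \to \omega$, $x \mapsto x(n)$, and prove $\limn{s} \overline{\mathbf{G}}^{(n)} \cong \limn{s} \mathbf{H}^{(n)}$. The most transparent route is via Jensen's Lemma \ref{jensen_lemma}: rewriting both sides as \v{C}ech cohomologies on the respective Alexandrov topologies and applying the Leray spectral sequence for $\pi_n$, the claim reduces to the vanishing in positive degrees of the higher direct images $R^q (\pi_n)_*$. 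This amounts to showing that for each $k$, the subspace $\pi_n^{-1}(\{j \leq k\}) = \{x \in \Om : x(n) \leq k\}$, which factors naturally as $\{0,\ldots,k\} \times {}^{\omega\setminus\{n\}}\omega$, has trivial sheaf cohomology in positive degrees for the pulled-back sheaf; this should follow because the sheaf depends only on the first, finite directed factor. Once this is in hand, the classical vanishing $\limn{s}\mathbf{H}^{(n)} = 0$ for $s \geq 2$ (which holds on any $\omega$-indexed inverse system, via the two-term $\lim$-acyclic resolution underlying the Mittag-Leffler exact sequence) completes the argument.

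The main technical obstacle I anticipate is the Leray reduction. An alternative, perhaps more elementary route is to construct an explicit contracting chain homotopy on $\mathcal{C}_{alt}(\overline{\mathbf{G}}^{(n)})$ using the section $\sigma: \omega \to \Om$ sending $k$ to the constant function with value $k$; the difficulty there lies in handling the alternating structure and in accommodating the fact that elements of $\Om$ need not be comparable to their images under $\sigma \circ \pi_n$, so that a naive cone-point formula over a single maximal element is unavailable.
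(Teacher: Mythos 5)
Your overall skeleton is exactly the paper's: decompose $\overline{\Gbf}$ as the levelwise product $\prod_k \Gbf_k$ of its coordinate systems (your $\overline{\Gbf}^{(n)}$ are precisely the paper's $\Gbf_k$, up to the obvious identification $(G_k)_x\cong G_{k,x(k)}$), apply Proposition \ref{Proposition:productive_limits} to commute $\limn{s}$ with the product, and reduce to the vanishing of $\limn{s}$ of each single-coordinate factor for $s\geq 2$. Where you diverge is only in how that factor vanishing is justified: the paper dispatches it in one line by citing \cite[Theorems 11.52 and 14.9]{SSH}, which together give the classical vanishing $\limn{s}=0$ ($s\geq 2$) for inverse sequences and the invariance of derived limits under the passage from the $\omega$-indexed system $(G_{n,k},p_{n,j,k})$ to its pullback along $x\mapsto x(n)$; you instead propose to prove this invariance by hand via Lemma \ref{jensen_lemma} and a Leray-type argument. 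That route can be made to work: the canonical cover $\{U_x\}$ is a Leray cover (sections over $U_x$ equal the stalk at $x$, so each $U_x$ is acyclic and \v{C}ech agrees with derived-functor cohomology), and the stalk of $R^q(\pi_n)_*$ at $k$ is $\limn{q}$ of the system on $\{x\in\Om\mid x(n)\leq k\}$, which vanishes for $q>0$ because that directed lattice contains the cofinal sub-lattice $\{x\mid x(n)=k\}$ on which the system is constant (and the nerve of the canonical cover of a directed lattice is a full simplex); but note that this last step already uses cofinal-restriction invariance of $\limn{q}$, which is essentially the content of \cite[Theorem 14.9]{SSH}. So your argument is correct in outline but reproves, with some genuinely nontrivial bookkeeping still to be written out (and your alternative explicit-homotopy route is, as you suspect, the harder path), a comparison theorem that is available off the shelf; citing it, as the paper does, collapses your second step to a sentence.
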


\begin{proof}
Let $\Gcal$ and $s \geq 2$ be fixed.
Observe that $\overline{\Gbf} = \prod_{k=0}^\infty \Gbf_k$.
By \cite[Theorems 11.52 and 14.9]{SSH}, $\limn{n} \Gbf_k = 0$ for all $k$.
By Proposition \ref{Proposition:productive_limits}, 
$$\limn{s} \overline{\Gbf} = \limn{s} \prod_{k=0}^\infty \Gbf_k \cong
\prod_{k=0}^\infty \limn{s} \Gbf_k = 0.$$
\end{proof}

For each $n \in \omega$, define $x \leq^n y$ to mean that
$x(i) \leq y(i)$ whenever $n \leq i$.
If $x \leq^n y$ for some $n$, then we write $x \leq^* y$.

\begin{lem} \label{tail-cof_criterion}
If $X \subseteq \Om$ is $\leq^*$-cofinal, then $X$ is $\leq^n$-cofinal for some $n$.
\end{lem}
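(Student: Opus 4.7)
The plan is to argue by contradiction: assume $X \subseteq \Om$ is $\leq^*$-cofinal but fails to be $\leq^n$-cofinal for every $n \in \omega$, and then manufacture a single function that $X$ cannot $\leq^*$-dominate. The key observation is that failure of $\leq^n$-cofinality for each $n$ gives a sequence of obstructions which, although individually only prevent $X$ from dominating with the threshold $n$, can be amalgamated via a diagonal construction into a single function obstructing $\leq^*$-cofinality.

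Concretely, I would first unpack the hypothesis: if $X$ is not $\leq^n$-cofinal, then there exists $z_n \in \Om$ such that for every $x \in X$ one can find $i \geq n$ with $z_n(i) > x(i)$. Doing this for every $n$, I choose such a witness $z_n$ for each $n \in \omega$. The next step is to define the diagonal $z \in \Om$ by
\[
z(i) := \max\{z_n(i) \mid n \leq i\},
\]
which is a finite max and hence well-defined. The point of this definition is that for every $n$ and every $i \geq n$, the index $n$ appears in the set over which the maximum is taken, so $z_n(i) \leq z(i)$; equivalently, $z_n \leq^n z$ for every $n$.

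Finally, I would invoke the assumed $\leq^*$-cofinality of $X$ on $z$: there is some $x \in X$ and some $m \in \omega$ with $z \leq^m x$, that is, $z(i) \leq x(i)$ for all $i \geq m$. Combining this with the inequality $z_m \leq^m z$ obtained above yields $z_m(i) \leq x(i)$ for all $i \geq m$, contradicting the defining property of $z_m$. The only substantive step is the diagonal definition of $z$, and its correctness rests on nothing deeper than the fact that each $z_n$ contributes to the maximum from index $n$ onward. There is no real obstacle — the proof is essentially a standard diagonalization argument for $({}^\omega\omega,\leq^*)$ — but care must be taken that the threshold $m$ produced by $\leq^*$-dominance is fed back to the correct witness $z_m$, which is precisely what the construction arranges.
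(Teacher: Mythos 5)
Your proof is correct and is essentially identical to the paper's argument: the same choice of witnesses $z_n$ to the failure of $\leq^n$-cofinality, the same diagonal function $z(i)=\max\{z_n(i)\mid n\leq i\}$ satisfying $z_n\leq^n z$, and the same contradiction obtained by applying $\leq^*$-cofinality to $z$. No gaps.
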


\begin{proof}
Suppose not.
For each $n$, let $y_n \in \Om$ be such that there is no $x \in X$ with
$y_n \leq^n x$.
Define $y \in \Om$ by $y(n) = \max \{y(i) \mid i \leq n\}$, noting
that $y_n \leq^n y$ for all $n$.
By assumption, there is an $x \in X$ and $n$ such that
$y \leq^n x$.
But now $y_n \leq^n y \leq^n x$, a contradiction.
\end{proof}

The next lemma generalizes \cite[Lemma 2.7]{SVHDL} and will play a similar role in our generalization of that work's results. 

\begin{lem} \label{triv_from_restriction}
Let $\Gcal$ be an $\Om$-system of abelian groups.
If $X \subseteq \Om$ is $\leq^*$-cofinal,
$\Phi$ is an $s$-coherent family for $\Gcal$, and
$\Phi \restriction X$ is $s$-trivial, then
$\Phi$ is $s$-trivial.
\end{lem}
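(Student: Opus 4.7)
The plan is to extend the given witness $\Psi_X$ of the $s$-triviality of $\Phi\!\restriction X$ on $X$ to a witness $\Psi \in C^{s-1}_{alt}(\overline{\Gbf})$ of the $s$-triviality of $\Phi$ on all of $\Om$, following and generalizing the strategy of Lemma 2.7 in \cite{SVHDL}.

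First, apply Lemma \ref{tail-cof_criterion} to fix $n \in \omega$ such that $X$ is $\leq^n$-cofinal. As a preliminary reduction, let $\Theta := \Psi_X - \Phi\!\restriction X \in C^{s-1}_{alt}(\Gbf\!\restriction X)$ and extend $\Theta$ by zero to $\tilde\Theta \in C^{s-1}_{alt}(\Gbf)$ (setting $\tilde\Theta(\vec{x}) = 0$ for $\vec{x} \in \Om^s \setminus X^s$). Replacing $\Phi$ with $\Phi + \tilde\Theta$ preserves both $s$-coherence and the $s$-triviality question; after this replacement, $\Phi\!\restriction X = \Psi_X$ is itself a cocycle on $X$.

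For each $\vec{x} \in \Om^{[s]}$, use $\leq^n$-cofinality to select $\vec{y}(\vec{x}) \in X^s$ with $\vec{y}(\vec{x}) \geq^n \vec{x}$ componentwise. Define $\Psi(\vec{x}) \in \overline{G}_{\meet \vec{x}} = \prod_i G_{i,(\meet \vec{x})(i)}$ componentwise: on tail coordinates $i \geq n$, where $(\meet\vec{y}(\vec{x}))(i) \geq (\meet\vec{x})(i)$, set
$\Psi(\vec{x})(i) := p_{i,(\meet\vec{x})(i),(\meet\vec{y}(\vec{x}))(i)}(\Psi_X(\vec{y}(\vec{x}))(i))$;
on head coordinates $i < n$, set $\Psi(\vec{x})(i) := 0$. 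Extend $\Psi$ to all of $\Om^s$ by alternation.

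The required verifications are that $\Psi$ is alternating (arranged by an equivariant choice of $\vec{y}(\cdot)$), that $\Psi - \Phi \in C^{s-1}_{alt}(\Gbf)$, and that $\delta\Psi = 0$. The finite-support condition is immediate on head coordinates (since the support of $(\Psi - \Phi)(\vec{x})$ there is contained in $\{0, \ldots, n-1\}$), and reduces on tail coordinates to the claim that $p_{i,(\meet\vec{x})(i),(\meet\vec{y})(i)}\Phi(\vec{y})(i) - \Phi(\vec{x})(i)$ is finitely supported in $i \geq n$; this follows from the $s$-coherence of $\Phi$ applied to a lift $\vec{y}' \in \Om^{[s]}$ with $\vec{y}' \geq \vec{x}$ that agrees with $\vec{y}(\vec{x})$ on tail coordinates. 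The main obstacle is verifying $\delta\Psi = 0$: on head coordinates it is trivial, but on tail coordinates one must show $\sum_j (-1)^j p_{i,\ldots}\Psi_X(\vec{y}(\vec{w}^j))(i) = 0$ for each $\vec{w} \in \Om^{[s+1]}$ and $i \geq n$. This requires a well-chosen selection $\vec{y}(\cdot)$ that behaves compatibly under face maps, and invokes $\delta\Psi_X = 0$ on $X^{[s+1]}$ together with tail-cofinality to produce common upper bounds in $X$ (on coordinates $i \geq n$) for the relevant $\vec{y}(\vec{w}^j)$. This compatibility is the delicate heart of the argument.
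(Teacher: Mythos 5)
Your construction works for $s=1$, but for $s\geq 2$ it has a genuine gap at the finite-support verification, and this is not a repairable detail of bookkeeping. You assert that the $s$-coherence of $\Phi$ implies that the tail-coordinate difference $p_{\meet\vec{x},\meet\vec{y}(\vec{x})}\Phi(\vec{y}(\vec{x}))-\Phi(\vec{x})$ is finitely supported whenever $\vec{y}(\vec{x})$ dominates $\vec{x}$ on coordinates $\geq n$. Coherence only controls the alternating sums $\delta\Phi(\vec{w})$ over $(s+1)$-tuples; for $s=1$ these sums are literally such differences, but for $s\geq 2$ they are not, and the implication fails. Concretely, for $s=2$ take $\Phi=\delta\Lambda$ where $\Lambda\in C^{0}_{alt}(\overline{\Gbf})$ is arbitrary (say, in the system $\Acal$, $\Lambda(x)(n)=(x(n),0,\dots,0)\in\Z^{x(n)}$). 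Then $\Phi$ is a cocycle, hence $2$-coherent and already $2$-trivial with witness $\Psi=\Phi$, and one may take $\Psi_X=\Phi\restriction X$; but your extension produces $\Psi(\vec x)$ with tail part $p\Lambda(c(x_1))-p\Lambda(c(x_0))$, whose difference from $\Phi(\vec x)$ has first coordinate $c(x_1)(n)-c(x_0)(n)-x_1(n)+x_0(n)$ at level $n$, which is nonzero for infinitely many $n$ for typical $X$ and any selector. So your $\Psi$ is a perfectly good cocycle whose difference from $\Phi$ fails to lie in $C^{s-1}_{alt}(\Gbf)$, and the argument does not yield $s$-triviality. (Separately, you leave the verification of $\delta\Psi=0$ as ``the delicate heart''; that part at least is fixable by taking $\vec{y}(\vec{x})=(c(x_0),\dots,c(x_{s-1}))$ for a single selector $c:\Om\to X$ with $c(x)\geq^n x$, which makes the choice commute with face maps. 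But the finite-support step is the real obstruction, and it is exactly where a hand-made cochain extension needs correction terms of the kind the paper develops later, in its $\mathcal{A}_s$ machinery, for a different purpose.)

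The paper avoids all of this with a soft argument you may want to internalize: by finite additivity of $\limn{s-1}$, write $\Phi=\Psi+\Theta+\delta\Upsilon$ with $\Psi$ supported on the coordinates $i<n$ and $\Theta$ supported on the coordinates $i\geq n$. The head part $\Psi$ and the coboundary $\delta\Upsilon$ are trivially $s$-trivial, so $\Theta\restriction X$ is $s$-trivial; since the tail system depends only on coordinates $\geq n$, it is in effect indexed by the order $\leq^n$, in which $X$ is genuinely cofinal, and the cofinal-invariance of derived limits (\cite[Theorem 14.9]{SSH}) then transfers the $s$-triviality of $\Theta\restriction X$ to $\Theta$, hence to $\Phi$. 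In other words, the transfer along a cofinal set is done at the level of derived functors, not by an explicit pointwise extension of cochains; your approach essentially attempts to reprove that invariance by hand, which is where it breaks for $s\geq 2$.
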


\begin{proof}
By Lemma \ref{tail-cof_criterion},
$X$ is $\leq^n$-cofinal for some $n$.
It follows from the finite additivity of $\mathrm{lim}^{s-1}$ that there exist $\Psi \in C^{s-1}_{alt} (\bigoplus_{i =0}^{n-1} \Gbf_i )$ and $\Theta \in C^{s-1}_{alt}(\bigoplus_{i =n}^\infty \Gbf_i)$
such that $\Phi = \Psi + \Theta +\delta\Upsilon$ for some $\Upsilon\in C^{s-2}_{alt}(\Gbf)$ (with $\delta\Upsilon=0$ if $s=1$).
Clearly $\Psi$ and $\delta\Upsilon$ are $s$-trivial, and since $\Phi \restriction X = \Psi \restriction X + \Theta \restriction X+ \delta\Upsilon \restriction X$, 
the family $\Theta \restriction X$ is $s$-trivial; hence by \cite[Theorem 14.9]{SSH}, $\Theta$ is $s$-trivial as well.
This concludes the proof.
\end{proof}

The next lemma records the relationship between the triviality of $s$-coherent families for $\Gcal$ and the vanishing of the derived limits of $\Gbf$.
It may be read as asserting that $\limn{s+1}\Gbf=0$ if and only if every $(s+1)$-coherent family for $\Gcal$ is $(s+1)$-trivial. 

\begin{lem} \label{limn_equiv}
Let $\Gcal$ be an $\Om$-system of abelian groups and
$s \geq 1$. $\limn{s+1} \Gbf = 0$ if and only if the natural
map $\limn{s} \overline{\Gbf} \to \limn{s} \overline{\Gbf}/\Gbf$ is a surjection.
\end{lem}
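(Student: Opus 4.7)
The plan is to deduce this equivalence directly from the long exact sequence of derived limits attached to the short exact sequence of inverse systems
\begin{equation*}
0 \to \Gbf \to \overline{\Gbf} \to \overline{\Gbf}/\Gbf \to 0,
\end{equation*}
which is exact in $\mathsf{Ab}^{\Om}$ because it is levelwise exact by construction of $\overline{\Gbf}/\Gbf$.

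Applying the derived functors $\limn{s}$ produces the long exact sequence
\begin{equation*}
\cdots \to \limn{s} \overline{\Gbf} \to \limn{s} \overline{\Gbf}/\Gbf \to \limn{s+1} \Gbf \to \limn{s+1} \overline{\Gbf} \to \cdots
\end{equation*}
Here the key input is Lemma \ref{limn_barG_vanish}, which tells us that $\limn{s+1} \overline{\Gbf} = 0$ whenever $s+1 \geq 2$, i.e., for every $s \geq 1$. In that regime the long exact sequence collapses at the right-hand end to
\begin{equation*}
\limn{s} \overline{\Gbf} \xrightarrow{\;\varphi\;} \limn{s} \overline{\Gbf}/\Gbf \to \limn{s+1} \Gbf \to 0,
\end{equation*}
where $\varphi$ is the natural map in the statement.

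By exactness at $\limn{s+1}\Gbf$ the map into it is surjective, so $\limn{s+1}\Gbf \cong \mathrm{coker}(\varphi)$. Hence $\limn{s+1}\Gbf = 0$ if and only if $\varphi$ is surjective, which is precisely the claim. There is essentially no obstacle to this argument; the only point worth underscoring is that the hypothesis $s \geq 1$ is exactly what is needed to apply Lemma \ref{limn_barG_vanish} to the term $\limn{s+1} \overline{\Gbf}$, and that the long exact sequence is a standard consequence of the fact that $\mathsf{Ab}^{\Om}$ has enough injectives and that $\mathrm{lim}$ is left exact, as recorded in the discussion preceding Proposition \ref{Proposition:classical_lims}.
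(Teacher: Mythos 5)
Your proof is correct and is essentially identical to the paper's: both pass to the long exact sequence of derived limits for $0\to\Gbf\to\overline{\Gbf}\to\overline{\Gbf}/\Gbf\to 0$, kill $\limn{s+1}\overline{\Gbf}$ via Lemma \ref{limn_barG_vanish}, and read off the conclusion from exactness of the segment $\limn{s}\overline{\Gbf}\to\limn{s}\overline{\Gbf}/\Gbf\to\limn{s+1}\Gbf\to 0$. No issues.
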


\begin{proof}
By Lemma \ref{limn_barG_vanish},
$\limn{s+1} \overline{\Gbf} = 0$.
The conclusion now follows from the exactness of the portion
$$
\limn{s} \overline{\Gbf} \to
\limn{s} \overline{\Gbf}/\Gbf \to \limn{s+1} \Gbf \to 0
$$
of the long exact sequence associated to $\overline{\Gbf}/\Gbf$.
\end{proof}

\section{A vanishing theorem for $\limn{s} \Gbf$}
\label{Section:Hechlers}

In this section, we prove a generalization of the main result in \cite{SVHDL}.
We recall the following definitions;
see \cite[VII,VIII]{set_theory:Kunen} for undefined terminology and a complete introduction to 
forcing and finite-support iterated forcing.

\begin{defn} 
\emph{Hechler forcing} consists of pairs $q=(t_q,f_q)$ where
$f_q \in\, ^\omega\omega$ and $t_q$ is a finite initial part of $f_q$.
We will refer to $t_q$ as the \emph{stem} of $q$.
Define $r \leq q$ if $t_q$ is an initial part of $t_r$ and $f_q \leq f_r$.
\end{defn}

\begin{defn}
If $\kappa$ is an ordinal, $\Hbb_\kappa$ denotes the \emph{finite-support iteration}
of Hechler forcing of length $\kappa$.
Elements of $\Hbb_\kappa$ are finite functions $q$ with domain contained
in $\kappa$ such that if $\alpha \in \dom(q)$ then $q(\alpha)$ is a \emph{nice $\Hbb_\alpha$-name
for a Hechler condition}.
The order on $\Hbb_\kappa$ is defined by $q \leq p$ if $\dom(p) \subseteq \dom(q)$ and
for all $\alpha < \kappa$,
$p \restriction \alpha \leq q \restriction \alpha$ in $\Hbb_\alpha$ and
if $\kappa = \alpha + 1$ and $\alpha \in \dom(p)$,
$q \restriction \alpha$ forces $q(\alpha) \leq p(\alpha)$.
\end{defn}

\begin{notn}
For $\alpha < \kappa$, we let $\dot h_\alpha$ denote
the $\alpha\Th$ Hechler real.
In general we will follow the \emph{dot convention} whereby $\dot x$ denotes a name for the element
$x$ which it represents in a generic extension.
\end{notn}

The next theorem is the central techical result of our paper.
The instance $\Gcal = \Acal$ is the main result of \cite{SVHDL};
our proof will consist in showing that, with minor modifications, the arguments of \cite{SVHDL} generalize to \emph{any} $\Om$-system of abelian groups $\Gcal$.

\begin{thm} \label{coherent->trivial}
If $\kappa$ is weakly compact then $\Hbb_\kappa$ forces that for any $s\geq 1$ and $\Om$-system $\Gcal$,
every $s$-coherent family for $\Gcal$ is trivial.
\end{thm}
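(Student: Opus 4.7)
The plan is to transfer the argument of \cite{SVHDL} from the specific system $\Acal$ to an arbitrary $\Om$-system $\Gcal$, isolating which features of $\Acal$ are actually used. Work in $V^{\Hbb_\kappa}$ and fix $s\geq 1$, an $\Om$-system $\Gcal$, and an $s$-coherent family $\Phi$ for $\Gcal$. Since $\Hbb_\kappa$ is $\kappa$-c.c., nice names for $\Gcal$ and $\Phi$ lie in $V^{\Hbb_\alpha}$ for some $\alpha<\kappa$; factoring $\Hbb_\kappa$ as $\Hbb_\alpha * \Hbb_{[\alpha,\kappa)}$ and absorbing, we may assume $\alpha=0$. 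The Hechler reals $X:=\{\dot h_\beta : \beta<\kappa\}$ form a $\leq$-cofinal subset of $\Om$ in the extension, so by Lemma \ref{triv_from_restriction} it suffices to exhibit a cocycle $\Psi\in C^{s-1}_{alt}(\overline{\Gbf})$ with $\Psi-\Phi\in C^{s-1}_{alt}(\Gbf)$ after restriction to some $\leq^*$-cofinal subfamily of $X$.

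The next step is to invoke the uniformization Lemma \ref{uniformizing_lemma}---the one consequence of the weak compactness of $\kappa$ that the argument needs---applied to a coloring of $[\kappa]^s$ whose value on $\vec\beta$ records a suitable discrete coding of the coherence data of $\Phi$ along the tuple $(\dot h_{\beta_0},\dots,\dot h_{\beta_{s-1}})$. Because each $G_{n,k}$ is finitely generated (hence countable) and the coherence condition localizes the relevant data to finitely many coordinates $n$, the color palette has size at most $\aleph_0 < \kappa$, so Theorem \ref{wc_char} yields a homogeneous $Y\subseteq\kappa$ of size $\kappa$. This is the single place where the passage from $\Acal$ to a general $\Gcal$ is made: the explicit values in $\Z^k$ used in \cite{SVHDL} are replaced by elements of the corresponding $G_{n,k}$, and the partition argument is otherwise unchanged.

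Homogeneity of $Y$ is then used, as in \cite{SVHDL}, to assemble $\Psi$ on $X':=\{\dot h_\beta : \beta\in Y\}$ from the monochromatic witnesses, and to verify both $\delta\Psi=0$ and $\Psi-\Phi\in C^{s-1}_{alt}(\Gbf\!\restriction\! X')$. Since $X'$ is $\leq$-cofinal in $\Om$, a final application of Lemma \ref{triv_from_restriction} upgrades this to $s$-triviality of $\Phi$ on all of $\Om$.

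The principal obstacle is verifying that the cocycle identity $\delta\Psi=0$ survives in the absence of the transparent projection structure $\Z^k\to\Z^j$ available for $\Acal$. Here Lemma \ref{limn_barG_vanish} is decisive: any obstruction to patching local cocycle data on $X'$ into a global cocycle for $\overline{\Gbf}$ lives in $\limn{s}\overline{\Gbf}$, which vanishes for $s\geq 2$, so the SVHDL construction carries through verbatim. The case $s=1$ requires no extension step at all, as $\delta\Psi=0$ then reduces to the pointwise compatibility of the chosen values, which is immediate from the homogeneity of $Y$.
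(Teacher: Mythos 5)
Your proposal breaks down at its very first reduction, and this undermines everything that follows. An $s$-coherent family $\Phi$ is indexed by tuples from the $\Om$ of the final extension, which has cardinality $\kappa=\mathfrak{c}$ there; it is therefore a $\kappa$-sized object whose name cannot be absorbed into any $V^{\Hbb_\alpha}$ with $\alpha<\kappa$ (unlike $\Gcal$, which is countable and can be so absorbed). Consequently the coloring you propose to build from ``the coherence data of $\Phi$ along $(\dot h_{\beta_0},\dots,\dot h_{\beta_{s-1}})$'' exists only in $V^{\Hbb_\kappa}$, where $\kappa$ is the continuum and is certainly not weakly compact, so Theorem \ref{wc_char} cannot be applied to it. The paper's proof never works with a realized $\Phi$ in this way: it fixes a condition $q$ and a \emph{name} $\dot\Phi$, chooses in $V$ conditions $q_{\vec\alpha}\leq q$ (for $\vec\alpha\in[\kappa]^{s+1}$) which force the order relations among the $\dot h_{\alpha_i}$ and \emph{decide} the finitely supported error terms $\dot{\mathtt e}(\vec\alpha)$ (this is where finite generation of the $G_{n,k}$ enters, exactly as you suspected), applies weak compactness in $V$ to homogenize those decided values, and then applies Lemma \ref{uniformizing_lemma} to the family of conditions themselves. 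The homogeneous/uniformized structure is transferred into the extension not directly but via the name $\dot B$ for the set of $\alpha$ whose condition $q_{\langle\alpha\rangle}$ lies in the generic filter, together with density (``linking'') arguments — Claim \ref{linking_claim_1} for $s=1$ and Claim \ref{strings} for $s>1$ — which guarantee that the decided relations are actually realized along tuples from $B$. Your proposal has no counterpart to this step: a homogeneous $Y\in V$ does not by itself force the coherence identities to hold along $\{h_\beta:\beta\in Y\}$ in $V[G]$, since the decisions made by $q_{\vec\alpha}$ only take effect when those conditions are generic.

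Two further points would also need repair. First, even in the paper's setup, $\delta\Psi=0$ for $s=1$ is not ``immediate from homogeneity'': homogeneity only fixes the threshold $n$, and the cocycle identity on tails is verified by producing, generically, a linking ordinal $\gamma$ with $q_{\alpha,\gamma},q_{\beta,\gamma}\in G$, using the $\Delta$-system structure from Lemma \ref{uniformizing_lemma}. Second, your use of Lemma \ref{limn_barG_vanish} to argue that ``any obstruction to patching lives in $\limn{s}\overline{\Gbf}$, which vanishes'' is not the mechanism of the paper and does not obviously substitute for it: triviality demands a cochain-level witness ($\delta\Psi=0$ and $\Psi-\Phi\in C^{s-1}_{alt}(\Gbf)$), and for $s>1$ the paper constructs it through the combinatorial-algebraic apparatus of the expressions $\mathcal{A}_s$, $\mathcal{S}_s$, $\mathcal{C}_s$, the statements $\mathfrak{u}_s(\tau)$, and Lemma \ref{s_c_lemma}, whose point is precisely that these identities are arithmetic in the error terms and bonding maps and hence survive the passage from $\Acal$ to an arbitrary $\Om$-system. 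The vanishing $\limn{s}\overline{\Gbf}=0$ is used elsewhere (in Lemma \ref{limn_equiv}, to convert triviality statements into $\limn{s+1}\Gbf=0$), not to drive the construction of the trivializing family.
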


When combined with Lemma \ref{limn_equiv}, Theorem \ref{coherent->trivial} yields the following corollary.

\begin{cor} \label{vanishing}
If $\kappa$ is a weakly compact cardinal,
then $\Hbb_\kappa$ forces that whenever
$\Gcal$ is an $\Om$-system of abelian groups and
$s \geq 2$, then $\limn{s} \Gbf = 0$.
\end{cor}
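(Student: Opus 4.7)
The plan is to work inside the generic extension $V^{\Hbb_\kappa}$, fix an arbitrary $\Om$-system of abelian groups $\Gcal$ and an integer $s \geq 2$, and deduce $\limn{s}\Gbf = 0$ by packaging Theorem \ref{coherent->trivial} into the criterion supplied by Lemma \ref{limn_equiv}. Concretely, I would apply Lemma \ref{limn_equiv} with the index $s - 1$ (which satisfies the hypothesis $s - 1 \geq 1$ precisely because $s \geq 2$) in place of the lemma's $s$. This reduces the vanishing of $\limn{s}\Gbf$ to the surjectivity of the natural map
\[
\limn{s-1}\overline{\Gbf} \longrightarrow \limn{s-1}\overline{\Gbf}/\Gbf.
\]

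To verify that surjectivity, I would appeal to the alternating-cochain description of derived limits together with the remark recorded just after the definitions of $s$-coherent and $s$-trivial families: every class in $\limn{s-1}\overline{\Gbf}/\Gbf$ is represented by some $s$-coherent family $\Phi \in C^{s-1}_{alt}(\overline{\Gbf})$ for $\Gcal$, and the class of such a $\Phi$ lies in the image of the natural map above if and only if $\Phi$ is $s$-trivial. Consequently, the surjection demanded by Lemma \ref{limn_equiv} is equivalent to the assertion that every $s$-coherent family for $\Gcal$ is $s$-trivial. Theorem \ref{coherent->trivial}, applied in $V^{\Hbb_\kappa}$ to the fixed system $\Gcal$ and to our $s \geq 1$, delivers exactly that statement, so chaining the equivalences yields $\limn{s}\Gbf = 0$ in $V^{\Hbb_\kappa}$.

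There is no real independent obstacle at the level of the corollary itself: all of the substantive content is absorbed into Theorem \ref{coherent->trivial}, whose forcing-theoretic argument (leveraging the weak compactness of $\kappa$ by way of Lemma \ref{uniformizing_lemma}) carries the genuine weight of the proof. The only small point to monitor is the indexing shift between Lemma \ref{limn_equiv} and the definitions of $s$-coherence and $s$-triviality, which is what forces the restriction to $s \geq 2$ in the statement of the corollary; for $s = 1$, the analogous conclusion $\limn{1}\overline{\Gbf}/\Gbf \twoheadrightarrow \limn{2}\Gbf$ would require controlling $\limn{1}\overline{\Gbf}$, which, as the remark following Lemma \ref{limn_barG_vanish} emphasizes, need not vanish.
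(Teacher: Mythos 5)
Your proposal is correct and follows essentially the same route as the paper: the corollary is exactly the combination of Theorem \ref{coherent->trivial} with Lemma \ref{limn_equiv} applied at index $s-1\geq 1$, using the observation preceding that lemma that classes in $\limn{s-1}\overline{\Gbf}/\Gbf$ are represented by $s$-coherent families and lie in the image of $\limn{s-1}\overline{\Gbf}$ precisely when those families are $s$-trivial. (Only your closing aside has a harmless index slip: the excluded case $s=1$ of the corollary would need the surjection $\limn{0}\overline{\Gbf}/\Gbf\twoheadrightarrow\limn{1}\Gbf$, whose proof is what would require the possibly nonvanishing $\limn{1}\overline{\Gbf}$ to be controlled.)
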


\begin{proof}[Proof of Theorem \ref{coherent->trivial}.]
Before beginning, we will recall the combinatorial mainspring of \cite{SVHDL}, namely, a higher-dimensional $\Delta$-system lemma packaging some of the large cardinal effects of $\kappa$ on indexed families of conditions in $\Hbb_\kappa$.
We then treat the case of $s=1$. The cases of $s> 1$ involve increasingly elaborate combinations of ``error-terms''; to describe these effectively will require some further preparatory remarks, which will heavily reference \cite{SVHDL}.
Portions of our argument will even follow \cite{SVHDL} verbatim.

The argument-strategy in all cases is the following: we will show that for any $q\in\Hbb_\kappa$ forcing that some family $\dot{\Phi}$ for some $\Om$-system $\dot{\Gcal}$ is $s$-coherent, there exists a $q_{\varnothing}\leq q$ and a $\Hbb_\kappa$-name $\dot{B}$ such that
$$q_{\varnothing}\Vdash\text{``}\dot{B}\text{ is a cofinal subset of }\kappa\text{ and }\dot{\Phi}\restriction \{\dot{h}_\alpha\mid\alpha\in \dot{B}\} \text{ is $s$-trivial''}.$$
As any $\Hbb_\kappa$-condition forcing that $\dot{B}$ is cofinal in $\kappa$ forces also that $\{\dot{h}_\alpha\mid\alpha\in\dot{B}\}$ is $\leq^*$-cofinal in $\Om$, this will imply $\dot{\Phi}$ to be $s$-trivial by Lemma 4 above.
For ease of reading, we will tend to abbreviate the indices $h_\alpha$ organizing our argument as $\alpha$, writing $\Phi_{\alpha,\beta}$ in place of $\Phi_{(h_\alpha,h_\beta)}$ and $p_{\alpha\wedge\beta,\beta}$ for the bonding map $\overline{G}_{h_\beta}\to\overline{G}_{h_\alpha\wedge h_\beta}$, for example.
Relatedly, we will write $\Phi\restriction B$ for families $\Phi\restriction \{h_\alpha\mid\alpha\in B\}$ as above.
Within such a framework, the utility of the following lemma should be easy to imagine.
The assumption that $\kappa$ is weakly compact is indispensible to this lemma, and this, in fact, is the only direct use which we will make of this assumption.
\begin{lem}[\hspace{.001 cm}\cite{SVHDL}] \label{uniformizing_lemma}
  Let $n$ be a positive integer and let $\{q_{\vec{\alpha}}\,|\,\vec{\alpha} \in [\kappa]^n\}$ be a family of
  conditions in $\Hbb_\kappa$.
  Let $u_{\vec{\alpha}} = \mathrm{dom}(q_{\vec{\alpha}})$.
  Then there is an unbounded set $A \subseteq \kappa$, a family
  $\langle u_{\vec{\alpha}} \mid \vec{\alpha} \in [A]^{<n}
  \rangle$, a natural number $\ell$, and a set of stems $\langle t_i \mid i < \ell \rangle$
  such that
  \begin{enumerate}
    \item $|u_{\vec{\alpha}}| =
    \ell$ for all $\vec{\alpha} \in [A]^n$, and if $\eta$ is the $i^{\mathrm{th}}$ element of
    $u_{\vec{\alpha}}$ then $t_{q_{\vec{\alpha}}(\eta)} = t_i$.
    \item $A$ and $\langle u_{\vec{\alpha}} \mid \vec{\alpha} \in [A]^{\leq n} \rangle$
    satisfy \begin{enumerate}
    \item for all $\vec{\alpha} \in [A]^{<n}$,
    \begin{enumerate}
      \item if $\beta \in A$ and $\vec{\alpha} < \beta$, then
      $u_{\vec{\alpha}} < \beta$,
      \item if $\vec{\beta} \in [A]^{\leq n}$ satisfies
      $\vec{\alpha} \sqsubseteq \vec{\beta}$, then $u_{\vec{\alpha}} \sqsubseteq
      u_{\vec{\beta}}$,
      \item the set $\{u_{\vec{\alpha}
      ^\frown \langle \beta \rangle} \mid \beta \in A\backslash(\max(\vec{\alpha})+1)\}$ forms a $\Delta$-system with root $u_{\vec{\alpha}}$;
    \end{enumerate}
    \item for all $m \leq n$ and all $\vec{\alpha}, \vec{\beta} \in [A]^m$,
    \begin{enumerate}
    \item $|u_{\vec{\alpha}}| = |u_{\vec{\beta}}|$, and
    \item if $\vec{\alpha}$ and $\vec{\beta}$ are aligned, then $u_{\vec{\alpha}}$
    and $u_{\vec{\beta}}$ are aligned.
    (Two finite sets of ordinals $u$ and $v$ are \emph{aligned} if $|u|=|v|$ and $|u\cap\alpha|=|v\cap\alpha|$ for all $\alpha\in u\cap v$.) 
    \end{enumerate}
    \end{enumerate}
    \item $q_{\vec{\beta}} \restriction u_{\vec{\alpha}}
    = q_{\vec{\gamma}} \restriction u_{\vec{\alpha}}$ for all $\vec{\alpha} \in [A]^{<n}$ and $\vec{\beta}, \vec{\gamma} \in
    [A]^n$ such that $\vec{\alpha} \sqsubseteq \vec{\beta}$ and $\vec{\alpha}
    \sqsubseteq \vec{\gamma}$.
  \end{enumerate}
\end{lem}
\begin{proof}
This is a combination of Lemmas 3.3 and 4.3 in \cite{SVHDL}.
\end{proof}

We now argue the case of $s=1$.
Let $q\in\Hbb_\kappa$ force that $\dot{\Phi}$ is $1$-coherent for $\dot{\Gcal}$.
By the above remarks, we may, for simplicity, take $\dot{\Phi}$ to be a sequence of $\Hbb_\kappa$-names $\{\dot{\Phi}_\alpha\mid \alpha\in\kappa\}$ with each $\dot{\Phi}_\alpha$ in $\dot{\overline{G}}_{\dot{h}_\alpha}$.
Let $A_0=\kappa\backslash(\max(\dom(q))+1)$; much as in the proof of \cite[Theorem 4.5]{SVHDL}, we begin by fixing for each $\alpha<\beta$ in $A_0$ a $q_{\alpha\beta}\leq q$ and $n_{\alpha\beta}\in\omega$ for each $\alpha<\beta$ in $A_0$ such that 
\begin{align*}q_{\alpha\beta}\Vdash\text{ ``}\dot{h}_\alpha\leq\dot{h}_\beta\text{ and }\dot{\Phi}_\alpha(n)=\dot{p}_{\alpha\beta}(\dot{\Phi}_\beta)(n)\text{ for all }n>n_{\alpha\beta}\text{''}.\end{align*}
This is possible because we may first extend $q$ to
$q \cup \{\langle \beta, (\emptyset, \dot h_\alpha)\rangle\}$ --- which forces
$\dot h_\alpha \leq \dot h_\beta$ --- and then further extend to the desired condition
using the fact that $q$ forces that $\dot{\Phi}$ is $1$-coherent.
Applying the weak compactness of $\kappa$, we thin $A_0$ to a cofinal $A_1\subseteq A_0$ such that $n_{\alpha\beta}$ equals some fixed $n$ for all $\alpha<\beta$ in $A_1$.
We then apply Lemma \ref{uniformizing_lemma} to the collection $\langle q_{\alpha\beta}\mid (\alpha,\beta)\in [A_1]^2\rangle$ to find a cofinal $A\subseteq A_1$, $u_\emptyset$, $\langle u_{\langle \alpha \rangle} \mid
  \alpha \in A \rangle$, $\ell$, and $\langle t_i \mid i < \ell \rangle$ as
  in the statement of the lemma.
  
  Let $q_{\varnothing}=q_{\alpha,\beta}\restriction u_{\varnothing}$ for some $\alpha<\beta$ in $A$.
  Similarly, for each $\alpha\in A$ choose some $\beta>\alpha$ in $A$ and let $q_\alpha=q_{\alpha,\beta}\restriction u_\alpha$.
  By Lemma \ref{uniformizing_lemma}, these definitions are independent of our choices.
  Observe also that as $q_{\varnothing}=\bigcap_{(\alpha,\beta)\in [A]^2}q_{\alpha,\beta}$ and each $q_{\alpha,\beta}\leq q$, we have that $q_{\varnothing}\leq q$.
  Let $\dot{B}$
  be a $\Hbb$-name for the set of $\alpha \in A$ such that $q_\alpha \in \dot{G}$,
  where $\dot{G}$ is the canonical $\Hbb_\kappa$-name for the $\Hbb_\kappa$-generic filter.
  This is the $q_{\varnothing}$ and $\dot{B}$ we seek, as described above.
\begin{claim} \label{unbounded_b_claim}
    $q_{\varnothing} \Vdash\text{``}\dot{B} \text{ is cofinal in } \kappa\text{''}$.
  \end{claim}
  \begin{proof} Fix $r\leq q_\varnothing$; it will suffice to find an $\alpha\in (A\backslash\eta)$ such that $r$ and $q_\alpha$ are compatible.
  To this end, note that as $\langle u_{\langle\alpha\rangle}\mid\alpha\in A\rangle$ forms a $\Delta$-system, there exists an $\alpha\in (A\backslash\eta)$ with $u_{\langle\alpha\rangle}\backslash u_\varnothing\cap \dom(r)=\varnothing$. Since $q_\alpha\restriction u_\varnothing=q_\varnothing$ and $q_\varnothing\geq r$, the conditions $q_\alpha$ and $r$ are indeed compatible, as desired.
  \end{proof}
\begin{claim} \label{linking_claim_1}
    $q_{\varnothing}$ forces that, for all $\alpha < \beta$ in $\dot{B}$, there is a
    $\gamma \in (A \setminus \beta + 1)$ such that $q_{\alpha, \gamma}$
    and $q_{\beta, \gamma}$ are in $\dot{G}$.
\end{claim}  
   \begin{proof}
    Fix $r \leq q_\varnothing$ and $\alpha < \beta$ such that $r$ forces both $\alpha$
    and $\beta$ to be in $\dot{B}$.
    By the definition of $\dot{B}$, we may
    assume that $r$ extends both $q_\alpha$ and $q_\beta$.
    It then suffices
    to find a $\gamma \in (A \setminus \beta + 1)$ such that $r, q_{\alpha, \gamma}$,
    and $q_{\beta, \gamma}$ all have a common extension $r'$.
    By construction, the
    families $\left\{ u_{\langle \alpha, \gamma \rangle} \setminus
    u_{\langle \alpha \rangle} ~ \middle| ~ \gamma \in (A \setminus \beta + 1) \right\}$
    and $\left\{ u_{\langle \beta, \gamma \rangle} \setminus u_{\langle \beta \rangle}
    ~ \middle| ~ \gamma \in (A \setminus \beta + 1) \right\}$ each consist of pairwise
    disjoint sets.
    We can therefore find a $\gamma \in (A \setminus \beta + 1)$
    such that $(u_{\langle \alpha, \gamma \rangle} \setminus
    u_{\langle \alpha \rangle})\, \cap\, \dom(r) = \emptyset$ and
    $(u_{\langle \beta, \gamma \rangle} \setminus
    u_{\langle \beta \rangle}) \,\cap\, \dom(r) = \emptyset$.
    By item (2.b.ii) of Lemma \ref{uniformizing_lemma}, the domains $u_{\langle \alpha, \gamma \rangle}$
    and $u_{\langle \beta, \gamma \rangle}$ are aligned, hence by item (1) of that lemma, the stems in $q_{\alpha,\gamma}$ and $q_{\beta,\gamma}$ match whenever their domains intersect.
    Any lower bound (see \cite[Lemma 4.1]{SVHDL}) for $r$ and
    $q_{\alpha, \gamma}$ and $ q_{\beta, \gamma }$ is then an $r'$ such as we had sought.
  \end{proof}
Fix now a $\Hbb_\kappa$-generic filter with $q_\varnothing\in G$.
We work in $V[G]$ and recall the convention that, e.g., $x$ denotes
the interpretation of $\dot x$ in the generic extension. 
We claim that the family $\Psi$ defined for each $\alpha\in B$ by
\[ \Psi_\alpha(k)=
\begin{cases} 
      0 & k\leq n \\
      \Phi_\alpha(k) & k>n 
   \end{cases}
\]
$1$-trivializes $\Phi\restriction B$.
This reduces to showing that $\delta\Psi=0$.
Observe, though, that $\delta\Psi_{\alpha,\beta}(k)=0$ for all $\alpha<\beta$ in $B$ and $k\leq n$.
For $\alpha<\beta$ in $B$ and $k>n$, fix a $\gamma$ as given by Claim \ref{linking_claim_1}; that claim together with the definitions of $q_{\alpha,\gamma}$ and $q_{\beta,\gamma}$ implies the following:
\begin{align*}
\delta\Psi_{\alpha,\beta}(k) & = p_{\alpha\wedge\beta,\beta}(\Psi_\beta)(k)-p_{\alpha\wedge\beta,\alpha}(\Psi_\alpha)(k) \\ & = p_{\alpha\wedge\beta,\beta}\circ p_{\beta,\gamma}(\Psi_\gamma)(k)-p_{\alpha\wedge\beta,\alpha}\circ p_{\alpha,\gamma}(\Psi_\gamma)(k) \\ & = p_{\alpha\wedge\beta,\gamma}(\Psi_\gamma)(k) - p_{\alpha\wedge\beta,\gamma}(\Psi_\gamma)(k) \\ & = 0\,.
\end{align*}
This concludes the argument of the case $s=1$.

One consequence of the above argument is a lemma which we will apply in the following section; we pause to record it:
\begin{lem} \label{wc_n=2again}
The following is forced by $\Hbb_\kappa$:
for every $c:[\kappa]^2 \to \omega$ there is a
cofinal $B \subseteq \kappa$ and $n \in \omega$ such that for every
$\alpha < \beta \in B$, there is a $\gamma > \alpha,\beta$
such that $c(\alpha,\gamma) = c(\alpha,\beta) = n$
and $h_\alpha,h_\beta \leq h_\gamma$.
\end{lem}
To see this, let $q\in\Hbb_\kappa$ force that $\dot{c}$ is a function $[\kappa]^2\to\omega$ and proceed as above: choose for each $\alpha<\beta$ in $A_0=\kappa\backslash(\max(\dom(p))+1)$ a $q_{\alpha,\beta}\leq q$ and $n_{\alpha\beta}$ such that
\begin{align*}q_{\alpha\beta}\Vdash\text{ ``}\dot{h}_\alpha\leq\dot{h}_\beta\text{ and }\dot{c}(\alpha,\beta)=n_{\alpha\beta}\text{''}.\end{align*}
The argument through to Claim \ref{linking_claim_1} then applies unchanged, and proves the lemma.

For the cases of $s>1$, our argument-strategy will be the following: let $q$ force that $\dot{\Phi}\in \dot{C}_{\mathrm{alt}}^{s-1}(\dot{\overline{\Gbf}})$ is $s$-coherent for $\dot{\Gcal}$, or in other words that $\delta\dot{\Phi}\in \dot{C}_{\mathrm{alt}}^{s}(\dot{\Gbf})$.
We will define a $q_\varnothing\leq q$ and $\mathbb{P}_\kappa$-names $\dot{B}$ and $\dot{\Theta}$ such that
\begin{align}\label{strategy}
q_\varnothing\Vdash\text{``}\dot{B}\text{ is a cofinal subset of }\kappa\text{ and }\dot{\Theta}\in \dot{C}_{\mathrm{alt}}^{s-1}(\dot{\Gbf})\text{ and }\delta(\dot{\Theta}\restriction\dot{B})=\delta(\dot{\Phi}\restriction\dot{B})\text{''}.
\end{align}
Clearly, $q_\varnothing$ then forces that $\delta((\dot{\Phi}-\dot{\Theta})\restriction \dot{B})=0$, and hence that $\dot{\Psi}\restriction \dot{B}:=(\dot{\Phi}-\dot{\Theta})\restriction \dot{B}$ is an $s$-trivialization of $\dot{\Phi}$, as desired.

The terms of the family $\dot{\Theta}\restriction \dot{B}$ are defined from differences among the terms of the family $\dot{\Phi}$; as noted, these definitions grow increasingly elaborate as the parameter $s$ rises.
We will follow the scheme introduced in \cite{SVHDL} for describing them; recalling it will occupy the next two pages.

\begin{defn}\label{61}
  For any nonempty $\tau$ in $[\kappa]^{<\omega}$, a \emph{subset-initial segment
  of $\tau$} is a sequence $\sigma_1 \subseteq \cdots \subseteq\sigma_m \subseteq \tau$
  such that
  \begin{itemize}
    \item $m \leq |\tau|$ and
    \item $|\sigma_i| = i$ for all $i$ with $1 \leq i \leq m$.
  \end{itemize}
  We write $\vec{\sigma} \vartriangleleft \tau$ to indicate that
  $\vec{\sigma}$ is a subset-initial segment of $\tau$.
  If $m = |\tau|$, then
  we call $\vec{\sigma} = \langle \sigma_1, \cdots, \sigma_m \rangle\vartriangleleft\tau$ a \emph{long string} or
  \emph{long string for $\tau$}.
\end{defn}

Fix now an $\Om$-system $\Gcal$ and suppose that $\langle h_\alpha \mid \alpha
< \kappa \rangle$ is an injective sequence of elements of ${^\omega}\omega$.
Suppose that for each positive integer $s$ the family $\Phi^s=\{
\Phi_{\vec{\alpha}}\mid \vec{\alpha} \in [\kappa]^s\}$ is $s$-coherent for $\Gcal$, where each such $\Phi_{\vec{\alpha}}$ is an element of $G_f$, where $f=\wedge_{i < s}\, h_{\alpha_i}$.
Let
$\vec{\Phi}$ denote the family $\{ \Phi_{\vec{\alpha}} \mid
\vec{\alpha} \in [\kappa]^{<\omega}, ~ \vec{\alpha} \neq \varnothing \}$.
Suppose also that to each nonempty $\tau \in [\kappa]^{<\omega}$ we have assigned
an ordinal $\alpha_\tau < \kappa$ in such a way that
\begin{itemize}
  \item if $\tau = \{\eta\}$, then $\alpha_\tau = \eta$ and
  \item if $\rho \subsetneq \tau$, then $\alpha_\rho < \alpha_\tau$.
\end{itemize}
Given a nonempty $\tau \in [\kappa]^{<\omega}$ and subset-initial segment
$\vec{\sigma} \vartriangleleft \tau$, write $\vec{\alpha}[\vec{\sigma}]$
to denote the sequence $\langle \alpha_{\sigma_i} \mid 1 \leq i \leq
|\vec{\sigma}| \rangle$; note that this sequence is increasing by assumption.
We will also sometimes write $\vec{\alpha}(\tau)$ to denote the increasing sequence enumerating $\tau$.

For $\vec{\alpha} \in [\kappa]^{<\omega}$ of length at least two, let
\[
  e^{\vec{\Phi}}(\vec{\alpha}) = \sum_{i < |\vec{\alpha}|} (-1)^i
  \Phi_{\vec{\alpha}^i}
\]
When the family $\vec{\Phi}$ is clear from context, it will
be omitted from the superscript above. Observe also that, for obvious reasons, we have notationally suppressed the bonding maps $p_{\wedge_{j\in s}\,\alpha_j,\wedge_{j\in s\backslash\{i\}}\,\alpha_j}$ rendering the above sum meaningful; we will continue to do so below.

Since each family $\Phi^s$ is $s$-coherent, the function $e(\vec{\alpha})$ is finitely supported for
any $\vec{\alpha} \in [\kappa]^{<\omega}$ of length at least two. We write $\mathtt{e}(\vec{\alpha})$ for the restriction of $e(\vec{\alpha})$ to its support.

We now record a series of formal equalities. First, for all $\vec{\alpha}
\in [\kappa]^{<\omega}$ of length at least three, let
\[
  \mathsf{d} e(\vec{\alpha}) = \sum_{i<|\vec{\alpha}|}(-1)^i e(\vec{\alpha}^i).
\]
Observe that $\mathsf{d} e(\vec{\alpha})$ is well-defined and equals $0$ for all such $\vec{\alpha}$: since $(\vec{\alpha}^i)^j = (\vec{\alpha}^j)^{i-1}$ for $j < i$,
when each $e(\vec{\alpha}^i)$ is expanded, terms appear in pairs of opposite-signed but otherwise identical expressions, and consequently all cancel.
 
If $L$ is some linear combination of the form
\[
  \sum_{i<\ell} a_i\, e(\vec{\alpha}_i)
\]
with all $\vec{\alpha}_i$ of length at least three then let
\[
  \mathsf{d} L=\sum_{i<\ell} a_i\, \mathsf{d} e(\vec{\alpha}_i)
\]
For such an $L$ also let
\[
  L*\beta=\sum_{i<\ell} a_i\, e(\vec{\alpha}_i,\beta)
\]
for any $\beta\in \kappa$ with $\vec{\alpha}_i <
\beta$ for all $i < \ell$, where $e(\vec{\alpha}_i,\beta) = e(\vec{\alpha}_i
{^\frown} \langle\beta\rangle)$. It is to ensure that operations such as $\mathsf{d}$ and $*$ are defined that we begin from a family of families $\vec{\Phi}$; ultimately, however, only the data of $\Phi^s$ will matter to our stage-$s$ applications of this framework (see \cite[p. 25]{SVHDL} for further discussion of this point).
Finally, if $j$ is less than $|\vec{\alpha}_i|$ for all $i$ then let
\[
  L^j=\sum_{i<\ell} a_i\, e(\vec{\alpha}^j_i)
\]

For integers $s\geq 2$ we now recursively define interrelated
\begin{itemize}
\item expressions $\mathcal{A}_s(\rho)$ parametrized by $\rho \in [\kappa]^s$,
\item expressions $\mathcal{S}_s(\tau)$ and
$\mathcal{C}_s(\tau)$ parametrized by $\tau \in [\kappa]^{s+1}$, and
\item statements $\mathfrak{u}_s(\tau)$ parametrized by $\tau \in [\kappa]^{s+1}$.
\end{itemize}
These expressions will depend at once on $\Gcal$, $\vec{\Phi}$, and the
collection $\{\alpha_\tau \mid \tau \in [\kappa]^{<\omega}\}$ fixed above, but this dependence will always be contextually clear enough that we will notationally ignore it. 

To begin, let
\[
  \mathcal{A}_2(\rho) = e(\vec{\alpha}(\rho), \alpha_\rho)
\]
for each $\rho \in [\kappa]^2$.
Next, given $s$ with $2 \leq s < \omega$ and $\tau \in [\kappa]^{s+1}$, if
$\mathcal{A}_s(\tau^i)$ has been defined for all $i \leq s$, let
\begin{align*}
  \mathcal{S}_s(\tau) &= \mathsf{d} e(\vec{\alpha}(\tau), \alpha_\tau) -
  \sum_{i < n + 1} (-1)^i \mathsf{d}[\mathcal{A}_s(\tau^i) * \alpha_\tau], \\
  \mathcal{C}_s(\tau) &= e(\vec{\alpha}(\tau)) - \sum_{i < s + 1}
  (-1)^i \mathcal{A}_s(\tau^i),
\end{align*}
and let $\mathfrak{u}_s(\tau)$ denote the conjunction of the following two
statements:
\begin{itemize}
  \item There exists an $\mathtt{e}$ such that $\mathtt{e}(\vec{\alpha}
  [\vec{\sigma}]) = \mathtt{e}$ for every long string $\vec{\sigma}
  \vartriangleleft \tau$.
  \item For all nonempty $\rho, \sigma$ with $\rho \subsetneq \sigma
  \subseteq \tau$, we have $h_{\alpha_\rho} \leq h_{\alpha_\sigma}$.
\end{itemize}
Lastly, let
\[
  \mathcal{A}_{s+1}(\tau) = (-1)^{s+1}\mathcal{C}_s(\tau) * \alpha_\tau.
\]
Several comments are at this point in order:
\begin{enumerate}
\item The interrelations of these expressions is strictly a matter of the arithmetic of the operations $e$, $\mathsf{d}$, $*$, and $+$, and the order-relations of the functions associated to $\vec{\alpha}$.
In other words, though these interrelations were first argued for the $\Om$-system $\mathcal{A}$, they will hold for any $\Om$-system $\Gcal$ taking its place.
Below, we record the main lemma connecting these expressions, then proceed directly to the $s>1$ case of our theorem; the proof of the lemma in \cite[pp. 25--28]{SVHDL} applies in our context without change.
\item The roles of the above-described expressions in our argument are as follows: $\mathcal{A}_s(\rho)$ is the value we assign to the functions $\Theta_{\vec{\alpha}(\rho)}$ evoked at (\ref{strategy}) above.
The assertion $\mathfrak{u}_s(\tau)$ records those order-relations and difference-relations which any $q_{\vec{\alpha}(\rho)}$ in the $\Hbb_\kappa$-generic filter $G$ enforces.
In the presence of $\mathfrak{u}_s(\tau)$, the expression $\mathcal{S}_s(\tau)$ amounts to the computation showing that $\mathcal{C}_s(\tau)=0$, and in light of our assignments $\mathcal{A}_s(\rho)=\Theta_{\vec{\alpha}(\rho)}$, the global vanishing of $\mathcal{C}_s\restriction [B]^{s+1}$ amounts to the relation $\delta(\Phi\restriction B)=\delta(\Theta\restriction B)$ we had taken as our goal at (\ref{strategy}).
\end{enumerate}
\begin{lem} \label{s_c_lemma}
$\mathcal{S}_s(\tau)=0$ for all $2 \leq s < \omega$ and $\tau \in [\kappa]^{s+1}$.
If also
  $\mathfrak{u}_s(\tau)$ holds, then
  \[
    0=\mathcal{S}_s(\tau)=(-1)^{s+1}\mathcal{C}_s(\tau).
  \]
\end{lem}
The lemma combines Lemmas 6.3 and 6.4 of \cite{SVHDL}.

Fix now $s>1$.
We will show in the manner described above that every $s$-coherent family for an $\Om$-system $\Gcal$ in $V^{\Hbb_\kappa}$ is $s$-trivial.
To that end, let $q\in\Hbb_\kappa$ force that $\dot{\Phi}^s=\{\dot{\Phi}_{\vec{\alpha}}\mid\vec{\alpha}\in [\kappa]^{s}\}$ is $s$-coherent for an $\Om$-system $\dot{\Gcal}$.
Observe that we may without loss of generality (i.e., up to $\Om$-system isomorphism) assume that the generating set of each $\dot{G}_{n,k}$ comprising $\dot{\Gcal}$ lies in $V$.
Let $A_0 = \kappa \setminus (\max(\dom(q)) + 1)$. Much as before, for all $\vec{\alpha}\in[A_0]^{s+1}$
  there exists a condition $q_{\vec{\alpha}} \leq p_0$ such that
  $q_{\vec{\alpha}} \Vdash\text{``}\dot{h}_{\alpha_0} \leq \dots
  \leq \dot{h}_{\alpha_i} \leq \dots \leq \dot{h}_{\alpha_{s}}\text{''}$ and
  $q_{\vec{\alpha}}$ decides the value of
  $\dot{\mathtt{e}}(\vec{\alpha})$ to be equal to some $e_{\vec{\alpha}} \in V$; this is because $\dot{\mathtt{e}}(\vec{\alpha})$ is forced to have finite support, with its values falling thereon in the finitely-generated groups $\dot{G}_{n,k}$.
  Apply the weak compactness of $\kappa$ to thin $A_0$ out to a cofinal $A_1\subseteq\kappa$ such that
  $e_{\vec{\alpha}}$ equals some fixed $e$ for all
  $\vec{\alpha} \in [A_1]^{s+1}$.

Now apply Lemma \ref{uniformizing_lemma} to $\left\langle q_{\vec{\alpha}} ~ \middle| ~ \vec{\alpha} \in [A_1]^{s+1} \right\rangle$ to find an unbounded $A \subseteq A_0$
  together with sets $\langle u_{\vec{\alpha}} \mid \vec{\alpha} \in [A]^{\leq s}
  \rangle$, a natural number $\ell$, and stems $\langle t_i \mid i < \ell \rangle$
  as in the statement of the lemma.
  Next, define conditions $\left\langle q_{\vec{\alpha}} ~ \middle| ~ \vec{\alpha}
  \in [A]^{< s} \right\rangle$
  as follows: for each $\vec{\alpha}$ in $[A]^{\leq s}$ let $\vec{\beta}$ be
  an element of $[A]^{n+1}$ such that $\vec{\alpha} \sqsubseteq \vec{\beta}$
  and let $q_{\vec{\alpha}} = q_{\vec{\beta}} \restriction u_{\vec{\alpha}}$.
  As above, by Lemma \ref{uniformizing_lemma}, these definitions
  are independent of all of our choices of $(n+1)$-tuples $\vec{\beta}$.
  Moreover, the fact that $q_\varnothing=\bigcap_{\vec{\alpha}\in [A]^{s+1}}q_{\vec{\alpha}}$ implies that $q_\varnothing\leq q$.
  
 We claim that $q_\varnothing$ forces $\dot{\Phi}^s$ to be trivial.
 This we argue by first partitioning $A$ into $s + 1$ disjoint and unbounded subsets $\{\Gamma_i\mid 1\leq i\leq s+1\}$.
 Let $\dot{B}$ be a $\mathbb{P}_\kappa$-name for the set of
  $\alpha \in \Gamma_1$ such that $q_{\langle\alpha\rangle} \in \dot{G}$, where $\dot{G}$ is the canonical name for the $\mathbb{P}_\kappa$-generic filter.
  By exactly the same reasoning as in the proof of Claim \ref{unbounded_b_claim},
  \[
    q_\varnothing \Vdash\text{``}\dot{B} \text{ is unbounded in } \kappa\text{''}.
  \]
\begin{claim}\label{strings}
   Fix $\tau\in [\kappa]^{s+1}$.
   The condition $q_\varnothing$ forces the following to hold
   in $V^{\mathbb{P}_\kappa}:$

   Suppose that $m$ and $\left\{\alpha_\sigma ~ \middle| ~ \sigma\in[\tau]^{<m}\textnormal{
   and }\sigma\neq\emptyset\right\}$ are such that
   \begin{itemize}
     \item $1< m\leq s+1$,
     \item $\alpha_{\{\eta\}} = \eta$ for all $\eta \in \tau$,
     \item $\alpha_\rho<\alpha_\sigma$ whenever $\rho$ is a proper subset of $\sigma$,
     \item $\alpha_\sigma\in \Gamma_{|\sigma|}$ for all nonempty
     $\sigma\in[\tau]^{<m}$, and
     \item for any $1\leq\ell< m$ and subset-initial segment $\vec{\sigma}
     \vartriangleleft \tau$ of length $\ell$, we have $q_{\vec{\alpha}[\vec{\sigma}]}
     \in \dot{G}$.
     In particular, $\eta \in \dot{B}$ for all $\eta \in \tau$.
   \end{itemize}
   Then there exist $\left\{\alpha_\sigma ~ \middle| ~ \sigma\in[\tau]^m\right\}\subseteq\Gamma_m$ which
   together with $\left\{\alpha_\sigma ~ \middle| ~ \sigma\in([\tau]^{<m}\backslash\{\emptyset\})\right\}$
   satisfy
   \begin{itemize}
     \item $\alpha_\rho<\alpha_\sigma$ whenever $\rho$ is a proper subset of $\sigma$, and
     \item for any subset-initial segment $\vec{\sigma} \vartriangleleft \tau$
     of length $m$, we have $q_{\vec{\alpha}[\vec{\sigma}]} \in \dot{G}$.
   \end{itemize}
 \end{claim}
 The proof of this claim (Claim 6.7 in \cite{SVHDL}) applies wholesale in our context as well.
 We therefore proceed to the conclusion of our argument.
 As in the $s=1$ case, we now work in $V[G]$, where $G$ is a $\Hbb_\kappa$-generic filter containing $q_\varnothing$; undotted names will again denote their realizations in $V[G]$. We begin by specifying for each nonempty $\sigma \in
 [B]^{\leq s+1}$ an ordinal $\alpha_\sigma < \kappa$ in such a way
 that the collection $\left\{\alpha_\sigma ~ \middle| ~ \sigma \in [B]^{\leq s+1}\right\}$ satisfies
 the following:
 \begin{itemize}
   \item $\alpha_{\{\eta\}} = \eta$ for all $\eta \in B$,
   \item $\alpha_\rho < \alpha_\sigma$ whenever $\rho$ is a proper subset of
   $\sigma$,
   \item $\alpha_\sigma \in \Gamma_{|\sigma|}$ for all nonempty $\sigma \in
   [B]^{\leq s + 1}$, and
   \item for every $\tau \in [B]^{s+1}$ and every subset-initial segment
   $\vec{\sigma} \vartriangleleft \tau$, we have $q_{\vec{\alpha}[\vec{\sigma}]}
   \in G$.
 \end{itemize}

 The construction of $\left\{\alpha_\sigma ~ \middle| ~ \sigma \in [B]^{\leq s+1}\right\}$
 proceeds via a straightforward recursion on $|\sigma|$, invoking
 Claim \ref{strings}. The $s$-coherent family $\Phi^s$ and collection $\left\{\alpha_\sigma
 ~ \middle| ~ \sigma \in [B]^{\leq s + 1} \backslash \{\emptyset\}\right\}$ determine
 expressions $\mathcal{A}_s(\rho)$ for all $\rho \in [B]^s$ and
 expressions $\mathcal{S}_s(\tau)$ and $\mathcal{C}_s(\tau)$ and statements
 $\mathfrak{u}_s(\tau)$ for all $\tau \in [B]^{s+1}$.  Note that our choices of
  $q_{\vec{\alpha}[\vec{\sigma}]}$, $B$, and $\left\{\alpha_\sigma ~ \middle| ~ \sigma \in [B]^{\leq s + 1}\right\}$
 ensure that $\mathfrak{u}_s(\tau)$ holds for all $\tau \in [B]^{s+1}$.

 For each $\rho \in [B]^s$, let $\Theta_{\vec{\alpha}(\rho)} = \mathcal{A}_s(\rho)$.
 It follows from the definition of $\mathcal{A}_s(\rho)$ and the fact that $\mathfrak{u}_s(\tau)$
 holds for all $\tau \in [B]^{s+1}$ that $\mathcal{A}_s(\rho)$
 is an element of $G_{\wedge\vec{\alpha}(\rho)} \subseteq \overline{G}_{\wedge\vec{\alpha}(\rho)} $.
 To show that $\Phi_s$ is $s$-trivial, it will suffice to show that
 \begin{align}\label{last}
  e(\vec{\alpha}(\tau)) = \sum_{i < s+1} (-1)^i \Theta_{\vec{\alpha}(\tau^i)}
 \end{align}
 for all
 $\tau \in [B]^{s+1}$, thereby establishing $\delta( \Phi \restriction \Bcal) = \delta (\Theta \restriction \Bcal)$ and hence
 realizing the plan described at (\ref{strategy}) above.
 Fix such a $\tau$.
 The above equation is easily seen to be equivalent to the assertion that $\mathcal{C}_s(\tau) = 0$.
 By Lemma \ref{s_c_lemma} and the fact that $\mathfrak{u}_s(\tau)$ holds, $0=\mathcal{S}_s(\tau) = (-1)^{s+1}\mathcal{C}_s(\tau)$.
 In consequence, $\mathcal{C}_s(\tau) = 0$.
 This shows that equation (\ref{last}) holds for an arbitrary $\tau$ hence that $\Phi^s$ is trivial.
\end{proof}

\section{An additivity lemma for $\limn{s} \Gbf$}

The additivity of $\limn{0}$ is a ZFC fact (this follows, for example, from the proof of \cite[Theorem 9]{SHINA}).
We are now in a position to show that $\Hbb_\kappa$ forces the additivity of $\limn{s}$ for $\Om$-systems, for all $s\geq 0$. 

\begin{lem} \label{add_lem}
If $\kappa$ is a weakly compact cardinal, then
$\Hbb_\kappa$ forces
that whenever $\Gcal$ is an
$\Om$-system of abelian groups and $s \geq 0$,
$\bigoplus_k \limn{s} \Gbf_k \to \limn{s} \bigoplus_k \Gbf_k$ is an isomorphism.
\end{lem}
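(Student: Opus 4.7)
The argument splits by the value of $s$. For $s=0$ the additivity of $\limn{0}$ is the ZFC fact recalled at the start of the section. For $s\geq 2$, each $\Gbf_k$ is itself an $\Om$-system of abelian groups (take $(G_k)_{n,j}=G_{k,j}$ if $n=k$ and $0$ otherwise), as is $\Gbf=\bigoplus_k\Gbf_k$; Corollary~\ref{vanishing} then forces $\limn{s}\Gbf_k=0=\limn{s}\Gbf$, so the map in question is trivially an isomorphism. The substantive case is $s=1$.

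For $s=1$, consider the long exact sequence associated to $0\to\Gbf\to\overline{\Gbf}\to\overline{\Gbf}/\Gbf\to 0$. Theorem~\ref{coherent->trivial} at $s=1$ and at $s=2$ yields the surjectivity of $\limn{0}\overline{\Gbf}\to\limn{0}\overline{\Gbf}/\Gbf$ and of $\limn{1}\overline{\Gbf}\to\limn{1}\overline{\Gbf}/\Gbf$ respectively; combined with $\limn{2}\Gbf=0$ (Corollary~\ref{vanishing}), one extracts the short exact sequence
$$0\to\limn{1}\Gbf\to\prod_k\limn{1}\Gbf_k\to\limn{1}\overline{\Gbf}/\Gbf\to 0,$$
where $\limn{1}\overline{\Gbf}$ is identified with $\prod_k\limn{1}\Gbf_k$ via Proposition~\ref{Proposition:productive_limits}. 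Finite additivity of $\limn{1}$ places $\bigoplus_k\limn{1}\Gbf_k$ as a subgroup of $\limn{1}\Gbf$, embedded into $\prod_k\limn{1}\Gbf_k$ by the standard inclusion; the remaining task is to show this containment is an equality.

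To that end, represent any $[\Psi]\in\limn{1}\Gbf$ by a cocycle $\Psi\in C^1_{alt}(\Gbf)$ and apply Lemma~\ref{wc_n=2again} to the coloring $c:[\kappa]^2\to\omega$ recording the maximum of $\mathrm{supp}\,\Psi_{(h_\alpha,h_\beta)}$. This yields a cofinal $B\subseteq\kappa$ and $n\in\omega$ with $\mathrm{supp}\,\Psi_{(h_\alpha,h_\beta)}\subseteq\{0,\ldots,n\}$ for all $\alpha<\beta$ in $B$. Decomposing $\Psi=\Psi^{\leq n}+\Psi^{>n}$ according to the splitting $\Gbf=(\bigoplus_{k\leq n}\Gbf_k)\oplus(\bigoplus_{k>n}\Gbf_k)$, the class $[\Psi^{\leq n}]$ lies in $\bigoplus_{k\leq n}\limn{1}\Gbf_k$ by finite support, while $\Psi^{>n}\!\restriction\! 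B=0$. To conclude $[\Psi^{>n}]=0$ in $\limn{1}\bigoplus_{k>n}\Gbf_k$, set $\Phi_y:=\Psi^{>n}_{(y,h_\alpha)}$ for $y\leq h_\alpha$ with $\alpha\in B$; well-definedness (independence of $\alpha$) follows from the cocycle identity applied to triples $(y,h_\alpha,h_{\alpha'})$ together with $\Psi^{>n}\!\restriction\! B=0$, and the cocycle identity on $(x,y,h_\alpha)$ with $x\leq y\leq h_\alpha$ gives $\delta(-\Phi)=\Psi^{>n}$ on the downward closure of $\{h_\alpha:\alpha\in B\}$. The main obstacle is then to extend this partial trivialization across all of $\Om$: since $\leq^*$-cofinality falls short of $\leq$-domination, one must argue that the descent thus obtained globalizes, in the spirit of Lemma~\ref{triv_from_restriction}.
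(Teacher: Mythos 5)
Your overall skeleton coincides with the paper's: $s=0$ is the ZFC fact, $s\geq 2$ is handled by vanishing, and for $s=1$ you use exactly the paper's device of coloring pairs by the support bound of $\Psi_{(h_\alpha,h_\beta)}$, invoking Lemma \ref{wc_n=2again}, and splitting $\Psi$ at coordinate $n$. (Your detour through the long exact sequence and the short exact sequence $0\to\limn{1}\Gbf\to\prod_k\limn{1}\Gbf_k\to\limn{1}\overline{\Gbf}/\Gbf\to 0$ is correct but not needed: injectivity of the natural map already follows from finite additivity alone, since any element of $\bigoplus_k\limn{1}\Gbf_k$ is supported on $k\leq m$ and $\limn{1}\bigoplus_{k\leq m}\Gbf_k\to\limn{1}\Gbf$ is a split injection; only surjectivity requires work.)

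The genuine gap is at the decisive step, which you flag but do not carry out: showing $[\Psi^{>n}]=0$ in $\limn{1}\bigoplus_{k>n}\Gbf_k$. Your $0$-cochain $\Phi_y=\Psi^{>n}_{(y,h_\alpha)}$ is only defined when $y\leq h_\alpha$, so your descent lives on the $\leq$-downward closure of $X=\{h_\alpha\mid\alpha\in B\}$, which is a proper subset of $\Om$ since $X$ is merely $\leq^*$-cofinal; the "globalization" you defer is precisely the content of the step, and the gesture toward Lemma \ref{triv_from_restriction} does not discharge it as stated (that lemma concerns $s$-coherent families valued in $\overline{\Gbf}$ modulo $\Gbf$, and its own proof rests on the cofinality theorem \cite[Theorem 14.9]{SSH}, which is what is really needed here). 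Two ways to close it. First, the paper's route: apply Lemma \ref{tail-cof_criterion} to fix $m\geq n$ with $X$ $\leq^m$-cofinal, split at coordinate $m$ instead of $n$, and then cite \cite[Theorem 14.9]{SSH} for the system $\bigoplus_{k>m}\Gbf_k$, which factors through the coordinates above $m$, so that $\leq^m$-cofinality of $X$ is as good as genuine cofinality for it; since $\Psi^{>m}\restriction X$ is a coboundary (indeed $0$ in your normalization), $\Psi^{>m}$ represents $0$. Second, your explicit construction can be repaired by the same observation: after enlarging $n$ so that $X$ is $\leq^n$-cofinal, the groups $(\bigoplus_{k>n}G)_x$ and their bonding maps depend only on the coordinates $x(k)$ with $k>n$, so one may define $\Phi_y:=\Psi^{>n}_{(y,h_\alpha)}$, read in $(\bigoplus_{k>n}G)_y$, whenever merely $y\leq^{n}h_\alpha$ with $\alpha\in B$; the same cocycle computations (all relevant bonding maps are the identity on coordinates $>n$, and $\Psi^{>n}$ vanishes on pairs from $B$) then give well-definedness and $\delta(-\Phi)=\Psi^{>n}$ on all of $\Om$, with no extension problem remaining. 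As written, however, that extension argument is missing, and without it the proof of surjectivity is incomplete.
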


\begin{proof}
If $s \geq 2$, then by Theorem \ref{vanishing},
$\limn{s} \Gbf = \limn{s} \Gbf_k = 0$ and there is
nothing to show.
The case of $s=0$ was discussed above.

Suppose therefore that $s =1$ and let $\Phi \in C_{alt}^1(\Gbf)$ represent an element of $\limn{1} \Gbf$.
Define
$$c(\alpha,\beta) = \min \{k \in \omega \mid \Phi (h_\alpha,h_\beta) \in \bigoplus_{i =0}^k G_{(h_\alpha \meet h_\beta) (i)}\}.$$
By Lemma \ref{wc_n=2again}, there is a cofinal $B \subseteq \kappa$ such that
if $\alpha < \beta$ are in $B$, then there is a $\gamma > \alpha,\beta$ such
that $c(\alpha,\gamma) = c(\beta,\gamma) = k$.
By using Lemma \ref{tail-cof_criterion} and replacing $B$ with a cofinal subset if necessary, we may assume that
$X:=\{h_\beta \mid \beta \in B\}$ is $\leq^n$-cofinal for some $n \geq k$.
Since
$$\delta \Phi (h_{\alpha},h_{\beta},h_{\gamma}) = \Phi(h_{\beta},h_{\gamma}) - \Phi (h_\alpha,h_\gamma) +
\Phi (h_\alpha,h_\beta)
= 0$$
it follows that $\Phi(h_\alpha,h_\beta) \in \bigoplus_{i=0}^n G_{(h_\alpha \meet h_\beta) (i)}$. 
Applying Proposition \ref{Proposition:productive_limits}, let $\Phi=\Psi+\Theta+\delta\Pi$ where $\Psi \in C_{alt}^1(\bigoplus_{i=0}^n \Gbf_i)$, $\Theta \in C_{alt}^1(\bigoplus_{i = n+1}^\infty \Gbf_i)$, and $\Pi\in C_{alt}^0(\bigoplus_{i=0}^\infty\Gbf_i)$.
Since $X$ is $\leq^n$-cofinal and $\Theta \restriction X = -\delta\Pi\restriction X$, 
\cite[Theorem 14.9]{SSH} implies that $\Theta$ represents $0$ in {$\lim^1 \bigoplus_{i = n+1}^\infty \Gbf_i$}. 
Thus $\Phi$ and $\Psi$ represent the same element of $\lim^1 \Gbf$.
Since $\limn{s}$ is finitely additive,
$\Psi$ --- and hence $\Phi$ --- represent elements in the range of 
$\bigoplus_k \limn{s} \Gbf_k \to \limn{s} \bigoplus_k \Gbf_k$; this concludes the proof.

\end{proof}

\begin{remark}
We do not know if every $1$-coherent family being $1$-trivial implies the $s=1$ case of Lemma \ref{add_lem}, as happens for $s>1$. 
\end{remark}

\section{The relationship between additivity and compact supports for strong homology}\label{limnadd_cpt_supt_section}

In this section, we relate the compact supports and additivity properties of strong homology with $\limn{s}$-additivity for $\Om$-systems.
Recall that the strong homology functors are denoted $\overline{H}_p$; their precise definition is sufficiently involved that we refer readers to \cite{SSH} or \cite{SHINA} for its full description.
Our main theorem is the following:

\begin{thm} \label{Om_add_cpt_supt}
The following are equivalent: 
\begin{enumerate}
    \item \label{cpt_supt}
    Strong homology has compact supports on the class of locally compact separable metric spaces.
    \item \label{cpt_add}
    Whenever $\{X_i\mid i\in\omega\}$ are compact metric, the natural map $\bigoplus_i(\overline{H}_p(X_i))\to\overline{H}_p(\coprod_iX_i)$ is an isomorphism for each $p$.
    \item \label{Om_limn_add}
Whenever $\Gcal$ is an $\Om$-system of abelian groups and $s \geq 0$, the natural map
$\bigoplus_k \limn{s} \Gbf_k \to \limn{s} \bigoplus_k \Gbf_k$ is an isomorphism.
\end{enumerate}
\end{thm}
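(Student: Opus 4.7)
My plan is to establish the cycle of implications $(1) \Rightarrow (2) \Rightarrow (3) \Rightarrow (1)$, exploiting throughout the standard Milnor-Miminoshvili-style identification of strong homology with derived limits of homology pro-groups of polyhedral resolutions, as outlined in \cite{SSH, SHINA}.

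The implication $(1) \Rightarrow (2)$ should be routine. For compact metric $X_i$ the disjoint union $X = \coprod_i X_i$ is locally compact separable metric, and every compact $K \subseteq X$ lies in a finite subsum $\coprod_{i \in F} X_i$ since each $X_i$ is clopen in $X$. By the ZFC-valid finite additivity of strong homology (a consequence of Milnor's wedge axiom on metric compacta, and consistent with Proposition \ref{Proposition:productive_limits}), $\overline{H}_p$ of each finite subsum decomposes as the direct sum of $\overline{H}_p(X_i)$; invoking (1) to identify $\overline{H}_p(X)$ with the colimit of these over finite $F$ then yields (2).

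For $(2) \Rightarrow (3)$, my approach is to realize an arbitrary $\Om$-system $\Gcal$ as arising from a countable sum of compact metric spaces. Given the finitely generated groups $G_{n,k}$ and bonding maps $p_{n,j,k}$ of $\Gcal$, standard mapping-telescope or generalized Hawaiian-earring constructions (as illustrated for $Y^{(d)}$ in Section 3) produce a compact metric space $Y_n$ whose polyhedral resolution has $p$-th homology pro-group equal to $\Gbf_n$ in a suitable dimension $p$. The resolution of $\coprod_n Y_n$ then carries homology pro-group $\Gbf = \bigoplus_k \Gbf_k$, and the Miminoshvili exact sequence (or the spectral sequence of \cite[Book III]{SSH}) identifies the strong homology terms with $\limn{s}\Gbf$ and the individual $\overline{H}_p(Y_n)$ with $\limn{s}\Gbf_n$, with a controlled dimension shift. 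Additivity of $\overline{H}_p$ then translates into the natural map $\bigoplus_k \limn{s}\Gbf_k \to \limn{s}\bigoplus_k \Gbf_k$ being an isomorphism, which is (3).

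The implication $(3) \Rightarrow (1)$ will be the main obstacle. For an LCS separable metric $X$, I will fix a compact exhaustion $X = \bigcup_n K_n$ with $K_n \subseteq \Int(K_{n+1})$, take polyhedral resolutions of each $K_n$, and assemble them into an $\Om$-indexed resolution of $X$ whose associated object is an $\Om$-system $\Gcal$ in the sense of Section 3 (with $\Gbf_k$ capturing the homological contribution of the $k$-th collar $\overline{K_{k+1}\setminus K_k}$). Both $\overline{H}_p(X)$ and $\mathrm{colim}_n \overline{H}_p(K_n)$ are then expressible through derived limits of $\Gbf$ and of the $\Gbf_k$ respectively; the comparison map between them is an isomorphism exactly when the maps $\bigoplus_k \limn{s}\Gbf_k \to \limn{s}\bigoplus_k \Gbf_k$ are, which is (3). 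The delicate part will be pinning down the relevant exact sequence computing $\overline{H}_p(X)$ from such a resolution and matching its $\limn{s}$ inputs precisely with the direct-sum decomposition $\Gbf = \bigoplus_k \Gbf_k$ of Section 3; one must verify the collar-based pieces assemble into a genuine $\Om$-system with the correct bonding homomorphisms, and that any dimension shifts between $\overline{H}_p$ and $\limn{s}$ are handled uniformly across all $p$ and $s$.
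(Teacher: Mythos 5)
Your implications $(1)\Rightarrow(2)$ and $(2)\Rightarrow(3)$ track the paper's argument closely: the first is the same routine factoring of a compact set through a finite subsum, and the second is essentially the paper's construction (inverse limits of Moore spaces $M(G_{n,k},1)$ realizing the prescribed homology pro-groups, then the Miminoshvili sequences and the Five Lemma, together with additivity of the \v{C}ech-level groups $\overline{H}_p^{(0)}$). The genuine problem is your $(3)\Rightarrow(1)$. An $\Om$-system is not just an $\Om$-indexed pro-group: by definition its level groups split as $G_x=\bigoplus_n G_{n,x(n)}$ with the bonding maps acting coordinatewise, and the subsystems $\Gbf_k$ are the single-coordinate summands. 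This structure encodes precisely a \emph{disjoint} union of compacta, where one can choose a resolution level for each summand independently and homology splits accordingly. For a general locally compact separable metric $X$ with exhaustion $K_0\subseteq K_1\subseteq\cdots$, the collar pieces $\overline{K_{k+1}\setminus K_k}$ are glued along their frontiers; neither $H_p(K_n)$ nor the homology pro-group of a resolution of $X$ decomposes as a direct sum of collar contributions, and the natural index category for such a resolution is a tower, not $\Om$ with independent coordinates. So the object you propose to feed into statement (3) simply is not an $\Om$-system, and the comparison map $\mathrm{colim}_n\overline{H}_p(K_n)\to\overline{H}_p(X)$ is not controlled by $\limn{s}$-additivity of $\Om$-systems without some further geometric input.

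That missing geometric input is exactly what the paper supplies by routing the argument through $(2)\Rightarrow(1)$ via Milnor's mapping telescope: one forms $L=\bigcup_n \overline{U}_n\times[n,n+1]\subseteq X\times[0,\infty)$, shows the projection $L\to X$ is a homotopy equivalence, and splits $L=L_0\cup L_1$ into the even- and odd-indexed blocks, so that $L_0$, $L_1$, and $L_0\cap L_1$ are each countable disjoint unions of metric compacta --- the situation where item (2) genuinely applies. A Mayer--Vietoris sequence for the closed triad $(L;L_0,L_1)$ (available because strong homology satisfies strong excision on paracompact pairs, a nontrivial ingredient your sketch never invokes) then identifies $\overline{H}_*(X)$ with the direct limit of the $\overline{H}_*(\overline{U}_n)$, which is compact supports. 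If you want to salvage your cycle, you should replace your collar-based $(3)\Rightarrow(1)$ with $(3)\Rightarrow(2)$ (the Miminoshvili/Five Lemma argument you already sketch in the other direction) followed by this telescope argument for $(2)\Rightarrow(1)$; as written, the step converting an exhaustion into an $\Om$-system is a gap that cannot be repaired without such a device.
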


By Lemma \ref{add_lem}, if $\kappa$ is weakly compact then item (\ref{Om_limn_add}) is forced by $\Hbb_\kappa$, hence the implication (\ref{Om_limn_add})$\Rightarrow$(\ref{cpt_supt}) will complete the proof of Theorem \ref{main_result}. 

Proving Theorem \ref{Om_add_cpt_supt} will require more of the machinery of strong homology than we have used so far; in particular, we will invoke both the first and second Miminoshvili exact sequences. 
For the reader's convenience, we begin by recalling these sequences, along with two related corollaries instrumental in computations of strong homology groups.
For explicit constructions of the $s$-stage strong homology groups $H_n^{(s)}$ therein, as well as proofs of Theorem \ref{M_seqs} and its corollaries, the reader is referred to \cite[Section 4]{SHINA}. 
In the following, $H_n(\Xbf)$ denotes the pro-abelian group consisting of the $n\Th$ singular homology groups of spaces in $\Xbf$, with connecting homomorphisms those induced by the maps in $\Xbf$. 

\begin{thm} \label{M_seqs}
For all integers $p$ and $s\geq 0$ there exist functors $H_p^{(s)}\colon$pro-$\mathsf{Top}\to\mathsf{Ab}$ and maps $j_p^{s-1,s}\colon H_p^{(s)}\to H_p^{(s-1)}$ which together satisfy:
\begin{enumerate}
    \item[(i)] 
    $H_n^{(0)}(\Xbf)=\lim H_n(\Xbf)$.
    \item[(ii)] \label{First_seq}
    For each integer $n$ and pro-topological space $\mathbf{X}$, there exists an exact sequence
    
    \begin{center}
        \begin{tikzcd}[row sep=tiny, column sep=small]
        0\ar[r]&
        \limn{1} H_{n+1}(\mathbf{X})\ar[r]&
        \overline{H}_{n}^{(1)}(\mathbf{X})\ar[r]&
        \overline{H}_{n}^{(0)}(\mathbf{X})\ar[r]&
        \limn{2} H_{n+1}(\mathbf{X})\ar[r]&
        \cdots\\\cdots\ar[r]&
        \limn{s} H_{n+1}(\mathbf{X})\ar[r]&
        \overline{H}_{n-s+1}^{(s)}(\mathbf{X})\ar[r]&
        \overline{H}_{n-s+1}^{(s-1)}(\mathbf{X})\ar[r]&
        \limn{s+1} H_{n+1}(\mathbf{X})\ar[r]&
        \cdots
        
        \end{tikzcd}
    \end{center}
Moreover, this assignment determines a functor from pro-$\mathsf{Top}$ to the category of long exact sequences of abelian groups.
    \item[(iii)] \label{second_seq}
    For each integer $n$ and pro-topological space $\mathbf{X}$, there exists an exact sequence
    \begin{center}
        \begin{tikzcd}[column sep=small]
        0\ar[r] &\limn{1}_s\overline{H}_{n+1}^{(s)}(\Xbf)\ar[r]&\overline{H}_n(\Xbf)\ar[r]&\limn{}_s \overline{H}_n^{(s)}(\Xbf)\ar[r]&0.
        \end{tikzcd}
    \end{center}
Moreover, this assignment determines a functor from pro-$\mathsf{Top}$ to the category of short exact sequences of abelian groups.
\end{enumerate}
\end{thm}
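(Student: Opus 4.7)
The plan is to construct the functors $H_p^{(s)}$ as the homologies of successive truncations of a ``coherent chain complex'' $\overline{C}_*(\Xbf)$ associated to an inverse-system resolution of $\Xbf$, and then to derive the two exact sequences from the resulting tower of truncations. First I would replace a given $\Xbf\in{}$pro-$\mathsf{Top}$ by a cofinal polyhedral resolution $(X_\alpha,p_{\alpha\beta},\Lambda)$ and form $\overline{C}_*(\Xbf)$, whose $n$-chains are finite formal sums of tuples $(\sigma, h^1, h^2, \ldots)$ consisting of a singular simplex $\sigma$ in some $X_{\alpha_0}$ together with higher homotopies $h^i$ indexed over chains $\alpha_0\leq\cdots\leq\alpha_i$ in $\Lambda$ witnessing coherent compatibility of $\sigma$ under the bonding maps. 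Truncating to $h^i=0$ for all $i>s$ yields subcomplexes $\overline{C}_*^{(s)}(\Xbf)$; define $H_p^{(s)}(\Xbf):=H_p(\overline{C}_*^{(s)}(\Xbf))$, with the map $j_p^{s-1,s}$ induced by the natural surjection $\overline{C}_*^{(s)}\twoheadrightarrow \overline{C}_*^{(s-1)}$.

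For (i), the $s=0$ truncation consists of compatible families of singular chains with no higher homotopies at all, and a direct argument identifies $H_p^{(0)}(\Xbf)$ with $\lim H_p(\Xbf)$. For (ii), one considers the short exact sequence of complexes
$$0\to K_*^{(s)}(\Xbf)\to \overline{C}_*^{(s)}(\Xbf)\to \overline{C}_*^{(s-1)}(\Xbf)\to 0,$$
whose kernel $K_*^{(s)}$ consists of those tuples whose only nontrivial component is in homotopy-degree $s$. This kernel is itself a bicomplex, one of whose differentials is (up to sign) the alternating $\delta$ of Proposition \ref{Proposition:classical_lims} applied to singular chains; a spectral sequence argument on this bicomplex, together with Proposition \ref{Proposition:classical_lims}, identifies its homology (with appropriate degree shift) as $\limn{s}H_{n+1}(\Xbf)$. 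Splicing the long exact sequences in homology arising from these truncation short exact sequences as $s$ varies then produces the first Miminoshvili sequence. For (iii), the full complex $\overline{C}_*(\Xbf) = \varprojlim_s \overline{C}_*^{(s)}(\Xbf)$ has by construction $\overline{H}_p$ as its homology, and since each truncation map is surjective the Milnor $\lim^1$ short exact sequence applies to the tower $\{H_{n+1}^{(s)}(\Xbf)\}_s$, yielding the second Miminoshvili sequence.

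The principal obstacle I expect is descent to pro-$\mathsf{Top}$: the coherent chain complex is manifestly functorial for level morphisms between inverse systems, but pro-morphisms are only represented by level morphisms after passage to a cofinal subsystem, and distinct resolutions of the same pro-space must be compared. One therefore needs to check that $\overline{C}_*^{(s)}$ is invariant, up to chain homotopy equivalence, under cofinal restrictions and under change of resolution, and that these chain equivalences are themselves sufficiently functorial to assemble into natural transformations on pro-$\mathsf{Top}$. This is the usual technical content underlying coherent homotopy methods in strong shape theory, and once it is established it automatically guarantees that both Miminoshvili sequences are natural on pro-$\mathsf{Top}$, as asserted.
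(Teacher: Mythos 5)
You should first note that the paper does not prove Theorem \ref{M_seqs} at all: it is recalled as background, and the reader is explicitly referred to \cite[Section 4]{SHINA} (see also \cite{SSH}) for the construction of the $s$-stage groups $\overline{H}_p^{(s)}$ and for the proofs of both Miminoshvili sequences. So there is no paper-internal argument to compare against; what can be assessed is whether your sketch would actually reconstruct the cited result. In outline it resembles the standard coherent-chain construction, and your treatment of (iii) --- the $\lim$--$\limn{1}$ (Milnor) sequence for an inverse sequence of chain complexes with surjective bonding maps whose limit is the full strong chain complex --- is essentially the correct standard argument, modulo phrasing (the sequence is applied to the tower of complexes, not to the tower of homology groups).

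The genuine gap is in your derivation of (ii), and it already infects (i). If $\overline{C}^{(s)}_*$ is the truncation of the coherent chain complex to coherence degrees $\leq s$, then the kernel of $\overline{C}^{(s)}_*\twoheadrightarrow\overline{C}^{(s-1)}_*$ consists of chains concentrated in coherence degree exactly $s$; since the $\delta$-part of the total differential raises coherence degree and degree $s+1$ has been truncated away, the induced differential on this kernel is only the singular boundary. It is therefore a single column, not a bicomplex carrying the $\delta$ of Proposition \ref{Proposition:classical_lims}, and its homology is the product $\prod_{\vec\alpha}H_{n+1}(X_{\alpha_0})$ --- i.e.\ the \emph{cochain groups} of the complex computing $\limn{s}H_{n+1}(\Xbf)$, not $\limn{s}H_{n+1}(\Xbf)$ itself. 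Consequently, splicing the long exact sequences of these truncation sequences does not formally produce the interleaved sequence in (ii); one must additionally identify the connecting homomorphisms with the $\delta$-coboundaries and then run the exact-couple/diagram-chase argument of \cite[Section 4]{SHINA} to extract the groups $\limn{s}H_{n+1}(\Xbf)$ and $\limn{s+1}H_{n+1}(\Xbf)$. Relatedly, defining $\overline{H}_p^{(s)}$ as the homology of the naive truncation does not even yield (i): read as a quotient complex the $s=0$ stage gives $\prod_\alpha H_p(X_\alpha)$, and read as the strictly compatible subcomplex it gives the homology of $\lim_\alpha C_*(X_\alpha)$, and neither is $\lim H_p(\Xbf)$ in general; the stage groups in \cite{SHINA} are defined more carefully precisely to make (i) and the sequences come out. (Smaller points: coherent chains are families indexed by all multi-indices, i.e.\ products rather than finite formal sums, and the truncations are quotients, not subcomplexes obtained by setting higher homotopies to zero. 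Your concern about descent from level morphisms to pro-$\mathsf{Top}$ and independence of the resolution is legitimate, and is handled in the literature by the coherent-homotopy machinery you allude to.)
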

The following two corollaries appear as Corollaries 1 and 2, respectively, in \cite{SHINA} and will feature in our proof of Theorem \ref{Om_add_cpt_supt}.
In the first of these, vanishing higher derived limits of intervals of homology pro-groups determine intervals on which the approximations $\overline{H}_{p}^{(s)}(\mathbf{X})$ to strong homology groups stabilize.

\begin{cor} \label{M_cor_1}
Let $1 \leq s_{0} \leq s_{1}$ be such that for a given integer $p$ one has
\[\limn{t} H_{p+s}(\mathbf{X})=0, \quad s_{0} \leq s \leq s_{1}, t>0.\]
Then the homomorphisms $j_{p}^{t-1, t}$ yield isomorphisms
\[
\overline{H}_{p}^{\left(s_{0}-1\right)}(\mathbf{X}) \approx \cdots \approx \overline{H}_{p}^{(s)}(\mathbf{X}) \approx \cdots \approx \overline{H}_{p}^{\left(s_{1}\right)}(\mathbf{X})
\]
for $s_{0} \leq s \leq s_{1}$.
\end{cor}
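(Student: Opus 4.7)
The plan is to apply the first Miminoshvili exact sequence (Theorem \ref{M_seqs}(ii)) once for each value of $s$ in the range $[s_0, s_1]$, extracting from it that each individual bonding map $j_p^{s-1,s} \colon \overline{H}_p^{(s)}(\mathbf{X}) \to \overline{H}_p^{(s-1)}(\mathbf{X})$ is an isomorphism; composing these maps then yields the claimed chain. The argument is essentially a direct reading of the long exact sequence once the index-shifts are set up correctly, so the only delicate step is bookkeeping.

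Fix $s$ with $s_0 \leq s \leq s_1$. The map $j_p^{s-1,s}$ appears in Theorem \ref{M_seqs}(ii) applied with $n = p + s - 1$, for then $n - s + 1 = p$. The relevant four-term segment of that sequence reads
$$\limn{s} H_{p+s}(\mathbf{X}) \longrightarrow \overline{H}_p^{(s)}(\mathbf{X}) \xrightarrow{\;j_p^{s-1,s}\;} \overline{H}_p^{(s-1)}(\mathbf{X}) \longrightarrow \limn{s+1} H_{p+s}(\mathbf{X}).$$
By hypothesis $\limn{t} H_{p+s}(\mathbf{X}) = 0$ for every $t > 0$, so both flanking terms vanish; since $s+1 > 0$, the right-hand term is included as well. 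Exactness at $\overline{H}_p^{(s)}(\mathbf{X})$ then gives that $\ker j_p^{s-1,s}$ is the image of $0$, hence $j_p^{s-1,s}$ is injective, and exactness at $\overline{H}_p^{(s-1)}(\mathbf{X})$ gives that $\mathrm{im}\, j_p^{s-1,s}$ is the kernel of the zero map out, hence $j_p^{s-1,s}$ is surjective.

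Composing the isomorphisms $j_p^{s-1,s}$ across $s = s_0, s_0 + 1, \ldots, s_1$ yields the chain
$$\overline{H}_p^{(s_0-1)}(\mathbf{X}) \approx \overline{H}_p^{(s_0)}(\mathbf{X}) \approx \cdots \approx \overline{H}_p^{(s_1)}(\mathbf{X})$$
as asserted. The subtlety worth flagging is that the Miminoshvili sequence of Theorem \ref{M_seqs}(ii) mixes terms $\overline{H}_{n-s+1}^{(s)}(\mathbf{X})$ across different values of the degree $n - s + 1$ as the running index $s$ varies within a single sequence; to keep the first coordinate equal to the fixed parameter $p$ one must invoke a different value of $n$, namely $n = p + s - 1$, for each target $s$. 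Once this adjustment is in place no further obstacle remains, as the hypothesis is tailored precisely to kill both neighbors of $j_p^{s-1,s}$ in the resulting sequence.
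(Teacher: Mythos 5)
Your argument is correct and is essentially the intended one: the paper itself offers no proof of this corollary, citing it as Corollary 1 of \cite{SHINA}, and the standard derivation there is exactly your exact-sequence chase. Your bookkeeping is right --- taking the first Miminoshvili sequence of Theorem \ref{M_seqs}(ii) with $n=p+s-1$ for each $s\in[s_0,s_1]$ places $j_p^{s-1,s}$ between $\limn{s}H_{p+s}(\mathbf{X})$ and $\limn{s+1}H_{p+s}(\mathbf{X})$, both of which vanish by hypothesis, so each bonding map is an isomorphism and the chain follows by composition.
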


The second corollary records conditions under which the $s$-stage strong homology groups converge to the groups $\overline{H}_{p}(\mathbf{X})$.   

\begin{cor} \label{M_cor_2}
Let $s_{0} \geq 1$ be such that for a given integer $p$ one has
\[
\limn{t} H_{p+s}(\mathbf{X})=0, \quad s_{0} \leq s, t>0.
\]
Then the homomorphisms $j_{p}^{s-1, s}$ and $j_{p}^{s}:\overline{H}_p\to\overline{H}_p^{(s)}$ induce isomorphisms
\[
\overline{H}_{p}^{\left(s_{0}-1\right)}(\mathbf{X}) \approx \cdots \approx \overline{H}_{p}^{(s)}(\mathbf{X}) \approx \cdots \approx \overline{H}_{p}(\mathbf{X}), \quad s_{0} \leq s.
\]
\end{cor}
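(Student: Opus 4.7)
The plan is to combine the two Miminoshvili sequences of Theorem~\ref{M_seqs} with the stabilization provided by Corollary~\ref{M_cor_1}. The statement has two parts: the chain of isomorphisms among the $\overline{H}_p^{(s)}(\mathbf{X})$ for $s\geq s_0-1$, and the isomorphism relating this stable value to $\overline{H}_p(\mathbf{X})$. I would handle these in order.

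First, I would apply Corollary~\ref{M_cor_1} to the index $p$ itself: since the hypothesis furnishes $\limn{t} H_{p+s}(\mathbf{X})=0$ for \emph{every} $s\geq s_0$ and every $t>0$, the corollary's conclusion holds for each $s_1\geq s_0$, which together yield the whole infinite chain of isomorphisms
\[
\overline{H}_p^{(s_0-1)}(\mathbf{X})\approx\overline{H}_p^{(s_0)}(\mathbf{X})\approx\cdots\approx\overline{H}_p^{(s)}(\mathbf{X})\approx\cdots
\]
induced by the transition maps $j_p^{s-1,s}$. In particular, the inverse system $\{(\overline{H}_p^{(s)}(\mathbf{X}),j_p^{s-1,s})\}_{s\geq s_0-1}$ is essentially constant, so ${\lim}_s \overline{H}_p^{(s)}(\mathbf{X})\cong \overline{H}_p^{(s_0-1)}(\mathbf{X})$ canonically.

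Second, I would plug into the second Miminoshvili sequence at degree $p$:
\[
0\to \limn{1}_s\overline{H}_{p+1}^{(s)}(\mathbf{X})\to \overline{H}_p(\mathbf{X})\to {\lim}_s\overline{H}_p^{(s)}(\mathbf{X})\to 0.
\]
The right-hand term has just been identified as $\overline{H}_p^{(s_0-1)}(\mathbf{X})$, and the map from $\overline{H}_p(\mathbf{X})$ into it agrees with $j_p^{s_0-1}$ by the naturality clauses of Theorem~\ref{M_seqs}. So the task reduces to showing the left-hand term vanishes, which I would obtain by reapplying Corollary~\ref{M_cor_1} one index up. The assumption $\limn{t} H_{p+s}(\mathbf{X})=0$ for $s\geq s_0$ translates, via the change of variable $s=s'+1$, into $\limn{t} H_{(p+1)+s'}(\mathbf{X})=0$ for $s'\geq s_0-1$; choosing the threshold $\max(s_0-1,1)\geq 1$ to satisfy the hypothesis of Corollary~\ref{M_cor_1}, I would conclude that the tower $\{\overline{H}_{p+1}^{(s)}(\mathbf{X})\}$ is also eventually an isomorphism tower and hence Mittag-Leffler, so $\limn{1}_s\overline{H}_{p+1}^{(s)}(\mathbf{X})=0$. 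The second Miminoshvili sequence then collapses to $\overline{H}_p(\mathbf{X})\approx \overline{H}_p^{(s_0-1)}(\mathbf{X})$ via $j_p^{s_0-1}$, completing the chain.

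The main obstacle, modest as it is, is twofold: keeping the index bookkeeping clean when shifting from the $p$-hypothesis to the $p+1$-hypothesis (in particular handling the edge case $s_0=1$, where the shifted threshold would fall below the minimum $1$ required by Corollary~\ref{M_cor_1} unless one rounds up), and invoking the standard homological algebra fact that $\limn{1}$ of an essentially constant countable tower of abelian groups vanishes. Everything else is a formal consequence of Theorem~\ref{M_seqs} and Corollary~\ref{M_cor_1}, together with the functoriality ensuring that the inferred isomorphism at the level of $\overline{H}_p$ is exactly the one induced by the canonical maps $j_p^s$.
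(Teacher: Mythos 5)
Your argument is correct and is essentially the intended one: the paper itself offers no proof of this corollary, citing it as Corollary 2 of \cite{SHINA}, and the derivation there is exactly what you describe --- Corollary \ref{M_cor_1} makes the tower $\{\overline{H}_p^{(s)}(\Xbf), j_p^{s-1,s}\}$ stabilize from $s_0-1$ on, so $\lim_s\overline{H}_p^{(s)}(\Xbf)\approx\overline{H}_p^{(s_0-1)}(\Xbf)$, while the shifted hypothesis at $p+1$ (with your $\max(s_0-1,1)$ adjustment for the $s_0=1$ case) makes $\{\overline{H}_{p+1}^{(s)}(\Xbf)\}$ eventually an isomorphism tower, hence Mittag-Leffler, killing the $\limn{1}_s$ term in the second Miminoshvili sequence. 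Your bookkeeping at the edge case and the identification of the resulting isomorphism with the maps $j_p^{s}$ are both handled correctly, so there is nothing to fix.
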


The last ingredient in our proof of Theorem \ref{Om_add_cpt_supt} is the existence of a Mayer-Vietoris exact sequence for strong homology. This follows from the fact that strong homology satisfies the Eilenberg-Steenrod axioms, but we record a more concrete proof.

\begin{lem}
Suppose that $X$ is paracompact and $U_1,U_2$ are closed subsets of $X$ such that $U_1\cup U_2=X$. 
Then there is a Mayer-Vietoris sequence for the triad $(X;U_1,U_2)$. 
\end{lem}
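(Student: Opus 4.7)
The plan is to deduce Mayer--Vietoris from a short exact sequence of pro-chain complexes obtained by choosing compatible resolutions of the triad $(X;U_1,U_2)$. Since strong homology of a space is, by one of its equivalent definitions, computed from the singular chain complex of a polyhedral resolution via homotopy inverse limit (equivalently, unwound through the Miminoshvili sequences of Theorem \ref{M_seqs}), a short exact sequence of such pro-chain complexes will automatically yield a long exact sequence in strong homology.

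First I would invoke the resolution theory of paracompact spaces developed in \cite{SSH}: for the paracompact space $X$ with closed cover $\{U_1,U_2\}$, one can construct a common cofinite directed index set $\Lambda$ and polyhedral resolutions $\mathbf{p}\colon X\to\mathbf{X}$, $\mathbf{p}_i\colon U_i\to\mathbf{U}_i$ for $i=1,2$, and $\mathbf{p}_0\colon U_1\cap U_2\to\mathbf{V}$, along with levelwise compatible inclusions, such that for each $\lambda\in\Lambda$ one has
$$\mathbf{X}_\lambda=\mathbf{U}_{1,\lambda}\cup\mathbf{U}_{2,\lambda}\quad\text{and}\quad\mathbf{V}_\lambda=\mathbf{U}_{1,\lambda}\cap\mathbf{U}_{2,\lambda}$$
as a CW-triad of polyhedra. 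This is the step that genuinely uses the paracompactness of $X$ together with the fact that $U_1,U_2$ are closed; it is effected by taking nerves of suitable common refinements of open normal covers of $X$, $U_1$, $U_2$, and $U_1\cap U_2$ and propagating them through the indexing set.

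Second, at each level the classical Mayer--Vietoris short exact sequence of singular chain complexes
$$0\to C_*(\mathbf{V}_\lambda)\to C_*(\mathbf{U}_{1,\lambda})\oplus C_*(\mathbf{U}_{2,\lambda})\to C_*(\mathbf{X}_\lambda)\to 0$$
holds, and the bonding maps of our resolutions render this a short exact sequence of pro-chain complexes. Passing to the associated homotopy inverse limits and then to homology (or, equivalently, applying Theorem \ref{M_seqs} functorially to the short exact sequence of pro-topological triads and comparing the resulting hyper-derived sequences) extracts the desired long exact Mayer--Vietoris sequence on $\overline{H}_*$.

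The main obstacle is the first step: arranging all four resolutions to display the closed-cover structure on every level simultaneously. The mechanics are standard in the strong-shape literature but are not entirely routine; once they are carried out, however, the remainder of the argument is purely homological algebra applied to a short exact sequence of pro-chain complexes.
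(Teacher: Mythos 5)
Your overall strategy (levelwise Mayer--Vietoris for a system of polyhedral triads, then pass to the limit) is not the paper's route, and its first step --- the one you flag as ``not entirely routine'' --- is in fact a genuine gap rather than a deferred technicality. You need resolutions $\mathbf{X}$, $\mathbf{U}_1$, $\mathbf{U}_2$, $\mathbf{V}$ over a common index set with $\mathbf{X}_\lambda=\mathbf{U}_{1,\lambda}\cup\mathbf{U}_{2,\lambda}$ and $\mathbf{U}_{1,\lambda}\cap\mathbf{U}_{2,\lambda}=\mathbf{V}_\lambda$ \emph{exactly} at every level, with the $\mathbf{V}$ so obtained still a resolution of $U_1\cap U_2$. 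The nerve construction you gesture at does not deliver this: if $N_i$ is the subcomplex of the nerve spanned by cover elements meeting $U_i$, then $N_1\cup N_2$ is the whole nerve, but $N_1\cap N_2$ contains simplices whose vertices individually meet both $U_1$ and $U_2$ while the simplex has no relation to the trace of the cover on $U_1\cap U_2$; in general $N_1\cap N_2$ is neither the nerve of that trace nor obviously part of a resolution of $U_1\cap U_2$. This is exactly the classical delicacy of \v{C}ech-type constructions at intersections, and no off-the-shelf ``resolution of a triad'' theorem of this strength is cited or proved; until it is, the short exact sequence of pro-chain complexes you want does not exist, so the homological-algebra conclusion cannot be drawn. (Your second step is fine modulo the first, granting the chain-level holim description of strong homology for genuine inverse systems.)

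The paper avoids this entirely by working at the level of already-established properties of strong homology: it invokes the long exact sequences of the pairs $(X,U_1)$ and $(U_1,U_1\cap U_2)$ (\cite[Theorem 19.5]{SSH}), observes that closedness of $U_1$ and $U_2$ makes the natural map of quotients a homeomorphism so that strong excision (\cite[Theorem 19.9]{SSH}) gives $\overline{H}_*(U_1,U_1\cap U_2)\cong\overline{H}_*(X,U_2)$, and then applies the purely formal Eilenberg--Steenrod argument (\cite[Theorem 15.3]{EileStee}) producing a Mayer--Vietoris sequence from exactness plus excision, treating the case $U_1\cap U_2=\emptyset$ separately by finite additivity. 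If you want to salvage your approach you would have to prove the triad-resolution statement (or replace exact levelwise unions/intersections by a controlled homotopy-coherent substitute), which is substantially harder than the excision-based argument; otherwise, reroute through excision as above.
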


\begin{proof}
We assume $U_1\cap U_2\neq\emptyset$, as otherwise this is just finite additivity.
By \cite[Theorem 19.5]{SSH}, there are long exact sequences for the pairs $(X,U_1)$ and $(U_1,U_1\cap U_2)$.
Observe that since $U_1$ and $U_2$ are both closed in $X$, the map $U_1/U_1\cap U_2\to X/U_1$ is a closed map and, therefore, since it is also bijective and continuous, a homeomorphism.
By the strong excision property (\cite[Theorem 19.9]{SSH}), the map $k_*\colon\overline{H}_*(U_1,U_1\cap U_2)\to\overline{H}_*(X,U_1)$ is an isomorphism. Under these conditions, we generally obtain a Mayer-Vietoris sequence (see \cite[Theorem 15.3]{EileStee}).
\end{proof}

Having collected the necessary machinery, we now turn to the proof of Theorem \ref{Om_add_cpt_supt}. 

\begin{proof}[Proof of Theorem \ref{Om_add_cpt_supt}]
We show the bi-implications (\ref{Om_limn_add})$\,\Leftrightarrow\,$(\ref{cpt_add}) and (\ref{cpt_supt})$\,\Leftrightarrow\,$(\ref{cpt_add}).

First we will show that (\ref{Om_limn_add}) implies (\ref{cpt_add}).
For each $i$, fix a resolution $X_i\to\mathbf{X}_i$ such that $\mathbf{X}_i$ is an inverse sequence of compact polyhedra.
Let $\mathbf{X}$ be the induced resolution of $X=\coprod_iX_i$. For $\mathbf{C}$ an inverse system of spaces,
let $\overline{H}_p^{(s)}(\mathbf{C})$ denote the $s$-stage strong homology group described in Section 4 of \cite{SHINA}. 
\begin{claim}
$\overline{H}_p(X)=\overline{H}_p^{(1)}(\mathbf{X})$
and $\overline{H}_p(X_i)=\overline{H}_p^{(1)}(\mathbf{X}_i)$
\end{claim}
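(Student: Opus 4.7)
The plan is to derive both identities from Corollary \ref{M_cor_2}. That corollary asserts that $\overline{H}_p^{(s_0-1)}(\mathbf{C})\cong\overline{H}_p(\mathbf{C})$ whenever $\limn{t} H_{p+s}(\mathbf{C})=0$ for all $s\geq s_0$ and $t>0$. Taking $s_0 = 2$, it suffices to show that $\limn{t} H_q(\mathbf{X})=0$ and $\limn{t} H_q(\mathbf{X}_i)=0$ for every $q$ and every $t\geq 2$; the claim will then follow, since $\overline{H}_p(X)=\overline{H}_p(\mathbf{X})$ and $\overline{H}_p(X_i)=\overline{H}_p(\mathbf{X}_i)$ by the definition of strong homology via resolutions.

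For each individual $\mathbf{X}_i$, the homology pro-group $H_q(\mathbf{X}_i)$ is an inverse sequence (indexed by $\omega$) of finitely generated abelian groups, and it is a classical result of Milnor that $\limn{t}$ of any such sequence vanishes for $t\geq 2$; this is part of the content of \cite[Theorems 11.52 and 14.9]{SSH} already cited in the proof of Lemma \ref{limn_barG_vanish}. This settles the second identity, in ZFC.

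For $\mathbf{X}$, recall from Section 3 that the standard resolution of a countable sum (see \cite[Theorem 6]{SHINA}) assembles the polyhedral approximations $X_{n,k}$ of the $X_n$ into an $\Omega$-indexed inverse system whose $q\Th$ homology pro-group is an $\Omega$-system $\Gbf$ arising from the $\Omega$-system $\Gcal$ with $G_{n,k} = H_q(X_{n,k})$, and that $\Gbf\cong\bigoplus_k \Gbf_k$ in pro-$\mathsf{Ab}$, where each $\Gbf_k$ is supported in the $k\Th$ coordinate. In particular each $\Gbf_k$ is isomorphic in pro-$\mathsf{Ab}$ to an inverse sequence of finitely generated abelian groups, so $\limn{t}\Gbf_k = 0$ for $t\geq 2$ by the same classical result. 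Invoking hypothesis (\ref{Om_limn_add}), we conclude
\[
\limn{t} H_q(\mathbf{X}) \;\cong\; \limn{t}\bigoplus_k \Gbf_k \;\cong\; \bigoplus_k \limn{t} \Gbf_k \;=\; 0
\]
for every $t\geq 2$. Combining the two vanishing results with Corollary \ref{M_cor_2} applied with $s_0 = 2$ yields $\overline{H}_p^{(1)}(\mathbf{X})\cong\overline{H}_p(\mathbf{X})$ and $\overline{H}_p^{(1)}(\mathbf{X}_i)\cong\overline{H}_p(\mathbf{X}_i)$, as desired.

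The only mildly subtle point is verifying the pro-$\mathsf{Ab}$ decomposition $H_q(\mathbf{X})\cong\bigoplus_k\Gbf_k$ — i.e.\ that passing to homology of the induced resolution of $\coprod_i X_i$ commutes with the coordinate-wise direct sum decomposition. This, however, is immediate from the explicit description of that resolution (whose stages are finite disjoint unions of the $X_{n,k}$) together with the additivity of singular homology on disjoint unions of polyhedra, and has in effect already been asserted in the remarks preceding Section 4.
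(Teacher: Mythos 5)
Your overall strategy is the right one and is essentially the paper's: establish $\limn{t}H_q(\Xbf_i)=0$ and $\limn{t}H_q(\Xbf)=0$ for $t\geq 2$ (the former classically, since $\Xbf_i$ is a sequence of compact polyhedra with finitely generated homology; the latter from hypothesis (\ref{Om_limn_add}) together with the decomposition $H_q(\Xbf)\cong\bigoplus_k\Gbf_k$), and then convert this vanishing into the stabilization $\overline{H}_p\cong\overline{H}_p^{(1)}$. However, the reduction step as you have written it is a genuine gap: Corollary \ref{M_cor_2} with $s_0=2$ does \emph{not} say what you use it to say. Its hypothesis is $\limn{t}H_{p+s}(\Xbf)=0$ for all $s\geq s_0$ and \emph{all} $t>0$, so in particular it demands $\limn{1}H_{p+s}(\Xbf)=0$, and your claim that ``it suffices to show \dots for every $t\geq 2$'' silently drops this. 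The $t=1$ instances are not available here and cannot be: hypothesis (\ref{Om_limn_add}) is only an additivity statement and does not force $\limn{1}$ of the relevant $\Om$-systems to vanish (the dyadic solenoid system $\mathcal{Z}$ of the Remark has $\limn{1}\Gbf_k\neq 0$, hence $\limn{1}\Gbf\neq 0$ even assuming additivity; and under CH one has $\limn{1}\Abf\neq 0$, while the implication (\ref{Om_limn_add})$\Rightarrow$(\ref{cpt_add}) is meant to be a ZFC theorem). So as stated, the hypothesis of the corollary you invoke can simply be false in the generality needed.

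The repair is to bypass Corollary \ref{M_cor_2} and argue directly from Theorem \ref{M_seqs}, which is what the paper does. The displayed section of the first Miminoshvili sequence,
\[
\limn{s}H_{p+s}(\mathbf{C})\to\overline{H}_p^{(s)}(\mathbf{C})\to\overline{H}_p^{(s-1)}(\mathbf{C})\to\limn{s+1}H_{p+s}(\mathbf{C}),
\]
only involves derived limits $\limn{t}$ with $t\geq 2$ when $s\geq 2$, so your vanishing results already give $\overline{H}_p^{(s)}(\mathbf{C})\cong\overline{H}_p^{(s-1)}(\mathbf{C})$ for all $s\geq 2$ and $\mathbf{C}=\Xbf$ or $\Xbf_i$. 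Then the second Miminoshvili sequence
\[
0\to\limn{1}_s\overline{H}_{p+1}^{(s)}(\mathbf{C})\to\overline{H}_p(\mathbf{C})\to\lim{}_s\,\overline{H}_p^{(s)}(\mathbf{C})\to 0
\]
finishes the argument, since both towers are constant from $s=1$ onward, so the $\limn{1}_s$ term vanishes and the inverse limit is $\overline{H}_p^{(1)}(\mathbf{C})$. With that substitution (no appeal to the unavailable $\limn{1}$ vanishing), your proof is correct and coincides with the paper's.
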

\begin{proof}
Observe that since $\mathbf{X}_i$ is a sequence, $\limn{s} H_n(\mathbf{X}_i)=0$ for any $n$ and $s\geq2$.
By (\ref{Om_limn_add}) together with the additivity of singular homology, $\limn{s}H_n(\mathbf{X})=0$.
When $\mathbf{C}=\Xbf$ or $\Xbf_i$, the section of the first Miminoshvili exact sequence given by
\begin{center}
    \begin{tikzcd}
    \limn{s}H_{p+s}(\mathbf{C})\ar[r]&\overline{H}_p^{(s)}(\mathbf{C})\ar[r]&\overline{H}_p^{(s-1)}(\mathbf{C})\ar[r]&\limn{s+1}H_{p+s}(\mathbf{C})
    \end{tikzcd}
\end{center}
implies that $H_p^{(s)}(\mathbf{C})\approx H_p^{(s-1)}(\mathbf{C})$ for $s\geq2$.
Application of the second Miminoshvili exact sequence completes the proof of the claim.
\end{proof}

In particular, (\ref{Om_limn_add}) will imply (\ref{cpt_add}) if it implies that the induced map $\bigoplus_i\overline{H}_p^{(1)}(\mathbf{X}_i)\to \overline{H}_p^{(1)}(\Xbf)$ is an isomorphism.
By the naturality of the first Miminoshvili exact sequence, there is a diagram with exact rows of the form

\begin{center}
    \begin{tikzcd}[column sep=small]
        0\ar[r]&\bigoplus_i\limn{1}H_{p+1}(\Xbf_i)\ar[d]\ar[r]&\bigoplus_i\overline{H}_p^{(1)}(\Xbf_i)\ar[r]\ar[d]&\bigoplus_i\overline{H}_p^{(0)}(\Xbf_i)\ar[r]\ar[d]&0\\
        0\ar[r]&\limn{1}H_{p+1}(\Xbf)\ar[r]&\overline{H}_p^{(1)}(\Xbf)\ar[r]&\overline{H}_p^{(0)}(\Xbf)\ar[r]&0
    \end{tikzcd}
\end{center}

By (\ref{Om_limn_add}), the map $\bigoplus_i\limn{1}H_{p+1}(\Xbf_i)\to \limn{1}H_{p+1}(\Xbf)$ is an isomorphism.
By the additivity of \v{C}ech Homology
(see \cite[Section 7]{SHINA}),
the map $\bigoplus_i\overline{H}_p^{(0)}(\Xbf_i)\to \overline{H}_p^{(0)}(\Xbf)$ is an isomorphism.
By the Five Lemma, we conclude that the map $\bigoplus_i\overline{H}_p^{(1)}(\Xbf_i)\to\overline{H}_p^{(1)}(\Xbf)$ is an isomorphism, completing the proof of the implication (\ref{Om_limn_add})$\,\Rightarrow\,$(\ref{cpt_add}). 

Next we will show that (\ref{cpt_add}) implies (\ref{Om_limn_add}). Let $\Gcal$ be an $\Om$-system of abelian groups. Fix for each $(n,k)$ a compact polyhedron $X_{n,k}$ which is a Moore space $M(G_{k,n},1)$ together with maps $f_{n,k}\colon X_{n,k}\to X_{n-1,k}$ such that the induced map on $H_1$ agrees with $p_{n,k}$.
Let $X_k=\lim_nX_{n,k}$ and $X=\coprod_kX_k$.
By the compactness of each $X_{n,k}$, each $X_k$ is compact and the map $X_k\to \Xbf_k=(X_{n,k},f_{n,k},\omega)$
is a resolution.
We also have, as usual, the induced resolution $X\to \Xbf$ in which $\Xbf$ is indexed by $\Om$.
The homology pro-groups of $\Xbf$ are the constant system
$\bigoplus_{k\in\omega}\Zbb$ in dimension $0$,
$\Gbf$ in dimension $1$, and $0$ in all other dimensions.
In particular, by Corollary \ref{M_cor_1} above, for each $p<0$ we have
$\overline{H}_p^{(0)}(\Xbf)\approx\overline{H}_p^{(1)}(\Xbf)\approx\ldots \approx\overline{H}_p^{(-p)}(\Xbf)$
and by Corollary \ref{M_cor_2} we have $\overline{H}_p^{(1-p)}(\Xbf)\approx\overline{H}_p^{(2-p)}(\Xbf)\approx\ldots\approx\overline{H}_p(\Xbf)$.
Similarly, 
$\overline{H}_p^{(1)}(\Xbf)\approx\overline{H}_p^{(2)}(\Xbf)\approx\ldots\approx\overline{H}_p(\Xbf)$ for all $p\geq0$.
Similar statements holds for each $\Xbf_i$.
In particular, since $\overline{H}_p^{(0)}$ is always additive and by (\ref{cpt_add})
an appropriate form of additivity holds for $\overline{H}_p$,
the natural map $\bigoplus_i(\overline{H}_p^{(s)}(\Xbf_i))\to\overline{H}_p^{(s)}(\Xbf)$
is an isomorphism for each integer $p$ and $s\geq 0$. Item (\ref{Om_limn_add})
now follows from an application of the Five Lemma to the following diagram with exact rows
obtained from the first Miminoshvili exact sequence 

\begin{center}
    \begin{tikzcd}[column sep=small]
        \bigoplus_i\overline{H}_{2-s}^{(s-1)}(\Xbf_i)\ar[r]\ar[d]&\bigoplus_i\overline{H}_{2-s}^{(s-2)}(\Xbf_i)\ar[d]\ar[r]&\bigoplus_i\limn{s}H_{1}(\Xbf_i)\ar[d]\ar[r]&\bigoplus_i\overline{H}_{1-s}^{(s)}(\Xbf_i)\ar[r]\ar[d]&\bigoplus_i\overline{H}_{1-s}^{(s-1)}(\Xbf_i)\ar[d]\\
        \overline{H}_{2-s}^{(s-1)}(\Xbf)\ar[r]&\overline{H}_{2-s}^{(s-2)}(\Xbf)\ar[r]&\limn{s}H_{1}(\Xbf)\ar[r]&\overline{H}_{1-s}^{(s)}(\Xbf)\ar[r]&\bigoplus_i\overline{H}_{1-s}^{(s-1)}(\Xbf_i)
        
    \end{tikzcd}
\end{center}

The implication (\ref{cpt_supt})$\,\Rightarrow\,$(\ref{cpt_add}) is not difficult.

By functoriality and finite additivity of $\overline{H}_p$, it suffices to show that the map $\bigoplus_{i\in\omega} \overline{H}_p(X_i)\to \overline{H}_p(\coprod_iX_i)$ is surjective. 
Fix $x\in \overline{H}_p(\coprod_iX_i)$ and, by compact supports, let $K$ be compact such that $x$ is in the image of the map $\overline{H}_p(K)\to\overline{H}_p(\coprod_iX_i)$. 
By compactness of $K$, there is a finite $F\subseteq\omega$ such that $K\subseteq\coprod_{i\in F}X_i$. 
Finite additivity implies that $\overline{H}_p(\coprod_{i\in F}X_i)\cong\bigoplus_{i\in F}\overline{H}_p(X_i)$. 
In particular, the map $\overline{H}_p(K)\to\overline{H}_p(\coprod_iX_i)$ factors through $\bigoplus_{i\in\omega} \overline{H}_p(X_i)$, so $x$ is in the image of the map $\bigoplus_{i\in\omega} \overline{H}_p(X_i)\to \overline{H}_p(\coprod_iX_i)$.

Finally, we will show that (\ref{cpt_add}) implies (\ref{cpt_supt}).
The template for our argument is Milnor's well-known proof of Lemma 1 in \cite{Milnor}.
Let $X$ be a locally compact separable metric space and fix open sets $\langle U_n\mid n\in\omega\rangle$ such that
\begin{itemize}
    \item  $U_n\subseteq U_{n+1}$ for each $n$, 
    \item  ${\overline{U}_n}$ is compact for each $n$, and
    \item $\bigcup_n U_n=X$.
\end{itemize}
Observe that the sequence of ${\overline{U}_n}$ is cofinal in all compact subsets of $X$.
Thus it suffices to show that $\dlim_n\overline{H}_p(U_n)=\overline{H}_p(X)$ for each $p$. Let $L\subseteq X\times[0,\infty)$ denote the space
\[L=\overline{U}_0\times[0,1]\cup\overline{U}_1\times[1,2]\cup\overline{U}_2\times[2,3]\cup...\]

\begin{claim}
The natural projection map $\pi\colon L\to X$ is a homotopy equivalence.
\end{claim}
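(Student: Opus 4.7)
The plan is to exhibit an explicit homotopy inverse to $\pi$. The essential observation is that for each $x \in X$ the fiber $\pi^{-1}(x)$ is a ray: setting $n_0(x) := \min\{n \in \omega \mid x \in \overline{U}_n\}$, one checks directly from the definition of $L$ that $(x,t) \in L$ if and only if $t \geq n_0(x)$. In particular, each fiber is a convex subset of $\{x\} \times [0,\infty)$, hence contractible, which is the geometric reason to expect $\pi$ to be a homotopy equivalence.

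To make this precise, I would construct a continuous section $s\colon X \to L$ as follows. Since $X$ is a separable metric space, it is paracompact, so we may choose a partition of unity $\{\lambda_n \mid n \in \omega\}$ subordinate to the open cover $\{U_n\}$, that is, with $\mathrm{supp}(\lambda_n) \subseteq U_n$ for each $n$. Define $\tau\colon X \to [0,\infty)$ by $\tau(x) := \sum_{n \in \omega} n\, \lambda_n(x)$. This sum is locally finite, hence continuous. Moreover, whenever $\lambda_n(x) > 0$ we have $x \in U_n \subseteq \overline{U}_n$, which forces $n_0(x) \leq n$; summing yields $\tau(x) \geq n_0(x)\sum_n \lambda_n(x) = n_0(x)$. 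By the characterization of $L$, the formula $s(x) := (x, \tau(x))$ defines a continuous map $X \to L$, and clearly $\pi \circ s = \mathrm{id}_X$.

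It remains to show $s \circ \pi \simeq \mathrm{id}_L$, and here I would use the straight-line homotopy $H\colon L \times [0,1] \to L$ given by $H((x,t), u) := (x,\,(1-u)t + u\,\tau(x))$. For every $u \in [0,1]$, both $t \geq n_0(x)$ (because $(x,t) \in L$) and $\tau(x) \geq n_0(x)$ hold, so the convex combination $(1-u)t + u\tau(x)$ is also $\geq n_0(x)$; the above characterization then guarantees $H((x,t),u) \in L$. Evaluation at $u=0$ and $u=1$ recovers $\mathrm{id}_L$ and $s \circ \pi$ respectively, completing the argument. The only mild obstacle is the construction of $\tau$, which reduces to the existence of a partition of unity subordinate to $\{U_n\}$ — routine for paracompact spaces and in particular for $X$.
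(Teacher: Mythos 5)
Your proposal is correct and takes essentially the same approach as the paper: both exhibit an explicit section $x\mapsto(x,\tau(x))$ of $\pi$ with $\tau$ continuous and $\tau(x)$ at least the least $n$ for which $x\in\overline{U}_n$, and then conclude with the straight-line homotopy in the fiber direction, which stays in $L$ because each fiber is an upward-closed ray. The only (cosmetic) difference is in how $\tau$ is built: the paper uses $\tau=\sum_n f_n$ with Urysohn functions vanishing on an exhausting sequence of closed sets $C_n\subseteq U_n$ and equal to $1$ off $U_n$, whereas you use $\tau=\sum_n n\,\lambda_n$ for a partition of unity subordinate to $\{U_n\}$.
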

\begin{proof}
We construct a homotopy inverse. Fix $\langle C_n\mid n\in\omega\rangle$ closed subsets of $X$ such that
\begin{itemize}
    \item $C_n\subseteq U_n$ for each $n$, 
    \item $C_n\subseteq C_{n+1}$ for each $n$, and
    \item $\bigcup_nC_n^\circ=X$.
\end{itemize}
To see that this is possible, let $C_n=\overline{\{x\in U_n\mid {B_{1/n}(x)}\subseteq U_n\}}$, for example. 

Using Urysohn's Lemma, fix continuous functions $f_n\colon X\to [0,1]$ such that
$f_n\upharpoonright C_n=0$ and $f_n\upharpoonright X\setminus U_n=1$.
Observe that for each $x$, $f_n(x)=0$ for all but finitely many $n$.
In particular, the sum $\sum_n f_n(x)$ is well-defined.
Moreover, $(\sum_n f_n)\upharpoonright C_k$, being the finite sum $f_0+\ldots+f_k$, is continuous.
Since the interiors of $C_k$ cover $X$, the sum $\sum_n f_n$ is continuous for any $k$.
We claim that the map $f\colon X\to L$ given by $x\mapsto(x,\sum_nf_n(x))$ is a homotopy inverse to the projection map. The composition $\pi\circ f$ is the identity and a homotopy between
$\operatorname{id}_L$ and $f\circ\pi$ is given by
\[(x,y,t)\mapsto(x,(1-t)y+t\sum_nf_n(x)).\]
\end{proof}
Let $L_0\subseteq L$ be the union of all ${\overline{U}_n}\times[n,n+1]$
with $n$ even. 
Similarly, Let $L_1\subseteq L$ be the union of all ${\overline{U}_n}\times[n,n+1]$ with $n$ odd. 
Observe that, by (\ref{cpt_add}) plus the homotopy axiom,
\[\overline{H}_{*}(L_{0}) \approx \overline{H}_{*}({\overline{U}_{0}}) \oplus \overline{H}_{*}({\overline{U}_{2}}) \oplus \overline{H}_{*}({\overline{U}_{4}}) \oplus \cdots,\]
with a similar assertion for $L_1$. Since $L_0\cap L_1$ is the disjoint union of the ${\overline{U}_n}\times[n+1]$, 
\[\overline{H}_{*}(L_0\cap L_{1}) \approx \overline{H}_{*}({\overline{U}_{0}})\oplus\overline{H}_{*}({\overline{U}_{1}}) \oplus \overline{H}_{*}({\overline{U}_{2}}) \oplus \overline{H}_{*}({\overline{U}_{3}}) \oplus \cdots.\]
Since $L_0$ and $L_1$ are closed in $L$ and $L$ is paracompact,
there is a Mayer-Vietoris sequence for the triad $(L;L_0,L_1)$.
The homomorphism
\[\psi\colon \overline{H}_*(L_0\cap L_1)\to \overline{H}_*(L_0)\oplus \overline{H}_*(L_1)\]
in this sequence is readily computed, and turns out to be the following:
\[\psi(a_0,a_1,...,0,0,...)=\left(a_0,p a_{1}+a_{2},p a_{3}+a_{4}, \cdots\right)\oplus\left(-pa_0-a_{1}, -p a_{2}-a_{3}, -p a_{4}-a_{5}, \cdots\right).\]
Here $a_n$ denotes a generic element of ${\overline{H}_*}({\overline{U}_n})$ and $p\colon \overline{H}_*({\overline{U}_n})\to\overline{H}_*({\overline{U}_{n+1}})$ denotes the inclusion homomorphism. It is convenient to precede $\psi$ by the automorphism $\alpha$ of
$\overline{H}_*(L_0\cap L_1)$ which multiplies each $h_n$ by $(-1)^{n}$. After shuffling the
terms on the right-hand side of the formula above, we obtain
\[
\psi \alpha(a_{0}, a_{1}, \cdots)=(a_{0}, a_{1}-p a_{0}, a_{2}-p a_{1}, a_{3}-p a_{2}, \cdots).
\]
It is clear from this expression that $\psi$ has kernel zero and has
cokernel isomorphic to the direct limit of the sequence $\{\overline{H}_*({\overline{U}_n})\}$. Now
the Mayer-Vietoris sequence
\[0 \longrightarrow \overline{H}_{*}\left(L_{0} \cap L_{1}\right) \stackrel{\psi}{\longrightarrow} \overline{H}_{*}\left(L_{0}\right) \oplus \overline{H}_{*}\left(L_{1}\right) \longrightarrow \overline{H}_{*}(L) \longrightarrow 0\]
completes the proof.
\end{proof}

As noted, in conjunction with Lemma \ref{add_lem}, the theorem just proven implies Theorem \ref{main_result}.

\end{document}